\renewcommand*\l@section{\@dottedtocline{1}{1.5em}{2.3em}}
\theoremstyle{plain}
\newtheorem{theorem}{Theorem}
\newtheorem{proposition}[theorem]{Proposition}
\newtheorem{lemma}[theorem]{Lemma}
\newtheorem{example}[theorem]{Example}
\newtheorem{corollary}[theorem]{Corollary}
\theoremstyle{definition}
\newtheorem{definition}{Definition}
\newtheoremstyle{myrem}
 {3pt}
 {3pt}
 {\normalsize}
 { }
 {\itshape}
 {:}
 { }
 {}
 \theoremstyle{myrem}
 \newtheorem{remark}{Remark}
 \appto\remark{\leftskip\parindent}
 \appto\remark{\rightskip\parindent}
\numberwithin{equation}{section}
\numberwithin{theorem}{section}
\begin{document}

\begin{center}

~~~

\bigskip

{\Large {\textbf {Differential Calculus  on  Hypergraphs  and   Mayer-Vietoris  Sequences  for  the   Constrained Persistent Homology }}}
 \vspace{0.58cm}\\

Shiquan Ren$^1$

\footnotetext[1]{The author  is    supported  by     China  Postdoctoral  Science  Foundation  2022M721023.     }

\smallskip

\begin{quote}
\begin{abstract}
 In  this  paper,  we  study  the  discrete  differential  calculus  on  hypergraphs   by  using   the  Kouzul  complexes.  
 We  define  the   constrained  (co)homology  for  hypergraphs  and  give  the  corresponding  Mayer-Vietoris  
 sequences.  We   prove  the  functoriality  of  the  Mayer-Vietoris  sequences  for    the   constrained   homology
  and  the   functoriality  of  the  Mayer-Vietoris  sequences  for    the   constrained   cohomology
 with respect to  morphisms  of  hypergraphs  induced  by  bijective maps  between  the  vertices.  
 Consequently,  we  obtain  the  Mayer-Vietoris  sequences  for  the  constrained  persistent  (co)homology 
 for    filtrations  of  hypergraphs.   
 As  applications,  we  propose   the  constrained  persistent   (co)homology  as  a  tool    for  the  computation  of  higher-dimensional  persistent homology  of  large networks.  
 \end{abstract}

\smallskip

{ {\bf 2010 Mathematics Subject Classification.}  Primary 55U10, 55U15, Secondary 53A45, 08A50
	  }

{{\bf Keywords and Phrases.}       hypergraphs,   simplicial  complexes,  differential  calculus, Kouzul  complexes,  homology,   persistent  homology }

\end{quote}

\end{center}

\medskip

\section{Introduction}

Let   $V$  be  a  vertex  set  with  a  total  order  $\prec$.   
A   {\it  hypergraph}  $\mathcal{H}$   with its  vertices  from   $V$  is  a  collection  of     non-empty  subsets   of  $V$   (cf.  \cite{berge}).   An  element  of  $\mathcal{H}$  is  called  a  {\it hyperedge}.  
  A  {\it  simplicial complex}  $\mathcal{K}$  with  its  vertices  from  $V$  
  is  a hypergraph such  that     any non-empty  subset   of  any  hyperedge  in  $\mathcal{K}$  is  still  a 
  hyperedge in  $\mathcal{K}$.   A   hyperedge  in   $\mathcal{K}$  is  also   called   a   {\it   simplex}.  
    An  {\it  independence  hypergraph}   $\mathcal{L}$  with  its  vertices  from  $V$  
  is  a hypergraph such  that    any  finite   superset (whose  elements are from  $V$)  of    any  hyperedge  in  $\mathcal{L}$   is  stll  a  hyperedge  in  $\mathcal{L}$.

Discrete  differential  calculus  has  been   initially  studied  by A. Dimakis  and    F. M\"{u}ller-Hoissen  \cite{d1,d2}.  
 Recently,  the  author  \cite{camb2023}   investigated  the  discrete  differential  calculus  for  the  differentials   $\frac{\partial}{\partial  v}$,  $v\in  V$,   on  simplicial  complexes as  well as   the  discrete  differential  calculus  
 for  the  differentials   $dv$,  $v\in  V$,  
on  independence  hypergraphs  and   constructed  the   constrained   homology  for  simplicial  complexes 
 as   well  as  the  constrained  cohomology  for  independence  hypergraphs.    
 The  functoriality  of  the  constrained  (co)homology   has  been    given    and  some  Mayer-Vietoris  sequences  for  the    constrained  (co)homology   have  been  proved   in \cite{mv}.

  Let $\mathcal{H}$  be  a  hypergraph.   The  associated  simplicial  complex  $\Delta\mathcal{H}$  is  the  
 smallest  simplicial  complex  such  that  each  hyperedge  of  $\mathcal{H}$   is  a  simplex  of  $\Delta\mathcal{H}$   (cf. \cite{hg1,parks}).  
 The  lower-associated  simplicial  complex  is  the  largest  simplicial  complex    $\delta\mathcal{H}$  such  
 that  each  simplex  of  $\delta\mathcal{H}$  is  a  hyperedge  of  $\mathcal{H}$  (cf.  \cite{jktr1000,jktr2,jktr3}).   
  The   associated   independence  hypergraph    $\bar\Delta\mathcal{H}$  
  is  the  smallest   independence  hypergraph      such  that   each  hyperedge  of  $\mathcal{H}$  
  is  a  hyperedge  of  $\bar\Delta\mathcal{H}$  (cf.  \cite{grh1}).  
  The   lower-associated   independence  hypergraph    $\bar\delta\mathcal{H}$  
  is  the  largest  independence  hypergraph      such  that   each  hyperedge  of  $\bar\delta\mathcal{H}$  
  is  a  hyperedge  of  $\mathcal{H}$  (cf.  \cite{grh1}).  
  With  the  help  of   $\Delta$,  $\delta$,  $\bar\Delta$  and  $\bar\delta$,  
  some  relations  among  random  hypergraphs   (cf.  \cite{f2022,jktr2}),  random  simplicial  complexes  (cf.  \cite{y2-2,cfh,f2022,9,y2-3,annals,jktr2})  and  random  independence 
  hypergraphs   can  be  obtained   (cf.   \cite[Theorem~1.1]{grh1}).

  In 1992,  A. D. Parks and S. L. Lipscomb  \cite{parks}   considered  the  simplicial  homology  of  $\Delta\mathcal{H}$     as  the  homology  of  $\mathcal{H}$  and  
   investigated  this  homology   to  study  the  acyclicity  of  $\mathcal{H}$.  
  In  2019,  inspired  by  the  path  homology  theory  of  digraphs    by  
  A.  Grigor'yan,  Y.  Lin, Y.  Muranov   and   S.-T. Yau  \cite{lin2,lin3,lin6},   S.  Bressan,  J,  Li,  S.  Ren and 
  J.  Wu  \cite{hg1}     constructed
  the   embedded  homology  of  hypergraphs  and   proved  a   (persistent)   Mayer-Vietoris   sequence  for  the  (persistent)  embedded  homology.     
  For  both  the    simplicial  homology  of  $\Delta\mathcal{H}$  and   the  embedded  homology  of  $\mathcal{H}$,  
  the  persistent  homology  of  low  dimensions  (for  example,  dimensions  $0$, $1$  and  $2$)   could   be  computed  even  if   the  number  of  vertices  in  $V$  as  well as   the  number  of   hyperedges  in $\mathcal{H}$  is  large.

  \smallskip

  In  this  paper,   we  study  the  discrete  differential  calculus  on  hypergraphs.  Let  $w$  be  a  weight  function on  $V$.    We  construct the   Kouzul  complex  for    the    differentials  $\frac{\partial}{\partial  v}$,  $v\in  V$,   as  well as  the  Kouzul  complex  for the  differentials  $dv$,  $v\in  V$,   with  respect  to  $w$.    
  We  define  the  $\mathcal{H}$-admissibility  for 
   $\frac{\partial}{\partial  v}$  as  well as    the  $\mathcal{H}$-admissibility  for  $dv$  in   Definition~\ref{def-22222}.  
  We  prove  in  Theorem~\ref{th-3.2ccv}  (main  result  I)  that  the  Kouzul  complex  for   $\frac{\partial }{\partial  v}$,  $v\in  V$ 
   (resp.  the  Kouzul  complex  for   $dv$,  $v\in  V$),     is  a long  exact sequence  if  there  are  finitely  many  vertices  $v$  such  that  $\frac{\partial}{\partial  v}$  is  $\mathcal{H}$-admissible (resp.   finitely  many  vertices  $v$  such  that    $ dv$  is  $\mathcal{H}$-admissible) and  $w$   is  non-vanishing  on  these  vertices.

   We  define  the  constrained  homology  for     $\mathcal{H}$  to  be  the  triple  consisting  of  the  constrained  homology  of  the  lower-associated  simplicial complex  $\delta\mathcal{H}$,  the  constrained  homology  of  the  associated  simplicial complex  $\Delta\mathcal{H}$,  and  the  homomorphism  from the  constrained  homology  of  $\delta\mathcal{H}$  to  the   constrained  homology  of  $\Delta\mathcal{H}$  induced  by  the  canonical  inclusion  from  $\delta\mathcal{H}$  into  $\Delta\mathcal{H}$.  
    We  define  the  constrained  cohomology  for     $\mathcal{H}$  to  be  the  triple  consisting  of  the  constrained  cohomology  of  the  lower-associated  independence  hypergraph  $\bar\delta\mathcal{H}$,  the  constrained  cohomology  of  the  associated  independence  hypergraph  $\Delta\mathcal{H}$,  and  the  homomorphism  from the  constrained  cohomology  of  $\bar\delta\mathcal{H}$  to  the   constrained  cohomology  of  $\bar\Delta\mathcal{H}$  induced  by  the  canonical  inclusion  from  $\bar\delta\mathcal{H}$  into  $\bar\Delta\mathcal{H}$.   
    In  Section~\ref{s4},  we  give  the   Mayer-Vietoris  sequences  for the  constrained  homology 
    of  hypergraphs  as  well  as  for   the   constrained  cohomology  of  hypergraphs.  
    In  Theorem~\ref{pr-5.829a}  (main  result  II),   we   prove  that  both  the    Mayer-Vietoris  sequences   for  the  constrained  homology 
    of  hypergraphs      
    and  the    Mayer-Vietoris  sequences   for  the  constrained  cohomology 
    of  hypergraphs  
    are  functorial  with respect  to  morphisms  of  hypergraphs  induced  by  bijective  maps  between  the  
    vertices.

     With  the  help  of    Theorem~\ref{pr-5.829a},  
      we  prove     the    Mayer-Vietoris  sequences    for  the constrained   persistent  homology    as  well as the    Mayer-Vietoris  sequences  for  the  constrained   persistent  cohomology 
       for  any  filtrations  of  hypergraphs   in  Theorem~\ref{th-091286}  (main  result   III).

  As  applications,  we   discuss  about  the  localizations  of persistent homology  for  large   networks   (for  example,  we    refer  to  \cite{physr,rev111,siam}  for  networks)  in  Section~\ref{s5}.  Persistent  homology  is  a  significant  computational  tool  in  topological  data  analysis  (for  example,  we    refer  to  \cite{pd2,pmd,pd1,superh,2005}  for  persistent  homology).  For  a  network  with  large  numbers  of   vertices  and  hyperedges,  
  the  $n$-th  persistent homology  is   computationally  expensive   if   $n$  is  large  as  well.   
   We  propose  the  constrained   persistent  (co)homology  as  a   possible  technique   for  the  computation  of  the  higher-dimensional  persistent  (co)homology  of     large  networks.    
  As  special  cases  of  the  constrained  
  persistent  (co)homology,  we      write the  usual  (co)boundary  
  map  (cf.  Example~\ref{ex-8.111}~(1)  and  (2))  as  a  sum  of  localized    (co)boundary  maps  such that    each  of  the   localized    (co)boundary  map   only  involves    small  numbers  of    vertices  and     hyperedges.   
  We  propose  the   localized  persistent  homology  of  the   (lower-)associated  simplicial  complex    for  a  sparse  network  and  propose    the   localized  persistent  cohomology  of  the   (lower-)associated  independence   hypergraph    for  a  dense  network.

  \smallskip

  The  paper  is  organized  as  follows.   In  Section~\ref{s2},  we  construct  the  Kouzul  complexes  
  for the  discrete  differential  calculus of $\frac{\partial}{\partial  v}$    and  $dv$   respectively  on a  discrete  set 
  $V$.   In  Section~\ref{s3},  we  study  the  discrete  differential   calculus  on  hypergraphs  and prove  
  Theorem~\ref{th-3.2ccv}.    In  Section~\ref{s4},    we  give  the  Mayer-Vietoris   sequences  for  
  the  constrained  (co)homology  of  hypergraphs  and  prove  the  functoriality  in  Theorem~\ref{pr-5.829a}. 
    In   Section~\ref{s88908},  we  construct  the    persistent   Mayer-Vietoris  sequences  for the  constrained  persistent  (co)homology  for   filtrations  of  hypergraphs  and  prove  the  functoriality  in  Theorem~\ref{th-091286}.    
  In  Section~\ref{s5},    we  propose  a  special  family  of  the  constrained  (co)homology,  which  is  called the   localized  persistent  cohomology,   for the  computations  of  higher-dimensional  persistent  homology  of  large networks.

\section{Discrete   Differential  Calculus     and  Kouzul  Complexes}\label{s2}

Let   $V$  be  a  discrete  set  whose elements are  called  {\it  vertices}.   
Let  $n\in\mathbb{N}$.    Let  $R$  be  a  commutative  ring  without   zero  divisors and   with  multiplicative  unit  $1$  such  that  
$2$  is  invertible.   
An {\it elementary $n$-path}     on $V$  is an  ordered sequence $v_0v_1\ldots v_n$ of  (not  necessarily  distinct)  $n+1$ vertices in  $V$   (cf.  \cite[Definition~2.1]{lin2},  \cite{lin1,lin3,lin4,lin6,lin5}).   
  A formal linear combination of elementary $n$-paths on $V$ with coefficients in  $R$  is called an {\it $n$-path}  on $V$.
  Denote by $\Lambda_n(V)$  the  free  $R$-module  of all the  $n$-paths  on  $V$ (cf.  \cite[Subsection~2.1]{lin2},   \cite{lin1,lin3,lin4,lin6,lin5}).     We have a graded  free  $R$-module   
$
\Lambda_*(V)=\bigoplus_{n=0}^\infty \Lambda_n(V)$.  
For any $v\in V$,   the {\it partial derivative} on  $\Lambda_*(V)$ with respect to $v$ is defined  to be  a sequence of   $R$-linear maps   (cf.  \cite[Subsection~3.2]{camb2023},   \cite{sid})
\begin{eqnarray}\label{eq-1.x1}
\frac{\partial}{\partial v}: ~~~ \Lambda_n(V)\longrightarrow \Lambda_{n-1}(V),~~~
n\in \mathbb{N}
\end{eqnarray}
given by
\begin{eqnarray}\label{eq-1.1}
\frac{\partial}{\partial v} (v_0v_1 \ldots v_n)=\sum_{i=0}^n (-1)^i  \delta(v,v_i) v_0\ldots \widehat{v_i}\ldots v_n,  
\end{eqnarray}
where      we  use the notation  $\delta (v,v_i)=1$  if $v=v_i$  and  $\delta(v,v_i)=0$  if  $v\neq v_i$,   
and   the {\it partial differentiation}  $dv$  on   $\Lambda_*(V)$  with respect to $v$  is  defined  to be a sequence of  $R$-linear maps
 (cf.  \cite[Subsection~3.3]{camb2023},   \cite{sid}) 
\begin{eqnarray}\label{eq-1.x2}
d v:  ~~~  \Lambda_{n}(V)\longrightarrow \Lambda_{n+1}(V), ~~~n\in \mathbb{N}
\end{eqnarray}
given  by
\begin{eqnarray} \label{eq-2.x5}
dv (u_0u_1\ldots u_{n-1})=\sum_{i=0}^n (-1)^i u_0u_1\ldots u_{i-1} v u_i u_{i+1}\ldots u_{n-1}.
\end{eqnarray}
Let  ${\rm  Ext}_*(V)$  be  the  exterior  algebra    generated  by  $\frac{\partial}{\partial  v}$,   $v\in  V$,   over  $R$
 and  let   ${\rm  Ext}^*(V)$  be  the  exterior  algebra  generated  by  $ dv $,   $v\in  V$,   over  $R$.  
It  is  proved  in  \cite[Lemma~3.1]{camb2023}  that  
$\frac{\partial}{\partial  v}\circ  \frac{\partial}{\partial  u}=-\frac{\partial}{\partial  u}\circ  \frac{\partial}{\partial  v}$
  and   in  \cite[Lemma~3.3]{camb2023}  that  
$dv \circ du=-du\circ  dv$  for  any  $u,v\in  V$. 
 Thus  both ${\rm  Ext}_*(V)$  and  ${\rm  Ext}^*(V)$   act  on  $\Lambda_*(V)$  such that the 
 compositions  of  maps are  represented  by  
 exterior  products.    
  Let   $w:  V\longrightarrow  R$  be  any  $R$-valued  function  on  $V$.   
 We  define  
 \begin{eqnarray*}
 \delta_n(w): ~~~ {\rm  Ext}_{n }(V)\longrightarrow     {\rm  Ext}_{n-1 }(V) 
 \end{eqnarray*}
 to  be  an  $R$-linear  map  given   by 
 \begin{eqnarray*}
 \delta_n(w)(\frac{\partial }{\partial  v_1}\wedge \cdots\wedge \frac{\partial }{\partial  v_n} )= \sum_{i=1}^n  (-1)^i  
 w(v_i)  \frac{\partial }{\partial  v_1}\wedge \cdots\wedge\widehat{\frac{\partial }{\partial  v_i}}\wedge\cdots \wedge \frac{\partial }{\partial  v_n} 
 \end{eqnarray*}
 and   define  
 \begin{eqnarray*}
 \delta^n(w): ~~~ {\rm  Ext}^{n }(V)\longrightarrow   {\rm  Ext}^{n-1 }(V) 
 \end{eqnarray*}
 to  be  an  $R$-linear  map  given   by 
 \begin{eqnarray*}
 \delta^n(w)(d v_1 \wedge \cdots\wedge   dv_n)= \sum_{i=1}^n  (-1)^i  
 w(v_i)    d v_1 \wedge \cdots\wedge\widehat{d v_i }\wedge\cdots \wedge  d  v_n.   
 \end{eqnarray*}
 
 \begin{lemma}\label{le-abc}
Let  $w:  V\longrightarrow  R$.  Let  $m,n\in \mathbb{N}$.   
\begin{enumerate}[(1).]
\item
For  any   $\xi_1 \in  {\rm  Ext}_n(V)$  and  any  $\xi_2\in  {\rm  Ext}_m(V)$,   we  have 
\begin{eqnarray*}
\delta_{n+m}(w)(\xi_1\wedge \xi_2)  = \delta_n(\xi_1)  \wedge  \xi_2  + (-1)^n \xi_1\wedge \delta_m(\xi_2);  
\end{eqnarray*}
\item
For  any   $\omega_1 \in  {\rm  Ext}^n(V)$  and  any  $\omega_2\in  {\rm  Ext}^m(V)$,   we  have 
\begin{eqnarray*}
\delta^{ n+m}(w)(\omega_1\wedge \omega_2)  = \delta^n(\omega_1)  \wedge  \omega_2  + (-1)^n \omega_1\wedge \delta^m(\omega_2).  
\end{eqnarray*}
\end{enumerate}
 \end{lemma}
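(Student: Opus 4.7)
The plan is to prove (1) by reducing to basis elements and then splitting the defining sum; (2) will follow by the same argument with $\frac{\partial}{\partial v}$ replaced by $dv$ throughout.

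By $R$-bilinearity of $\wedge$ and the $R$-linearity of $\delta_{n+m}(w)$, it suffices to verify the identity on basis wedges. So I would take
\[
\xi_1 = \frac{\partial}{\partial v_1}\wedge\cdots\wedge \frac{\partial}{\partial v_n}, \qquad
\xi_2 = \frac{\partial}{\partial v_{n+1}}\wedge\cdots\wedge \frac{\partial}{\partial v_{n+m}},
\]
so that $\xi_1\wedge\xi_2 = \frac{\partial}{\partial v_1}\wedge\cdots\wedge\frac{\partial}{\partial v_{n+m}}$. Applying the definition of $\delta_{n+m}(w)$ gives a single sum
\[
\delta_{n+m}(w)(\xi_1\wedge \xi_2)=\sum_{i=1}^{n+m}(-1)^i w(v_i)\,\frac{\partial}{\partial v_1}\wedge\cdots\wedge \widehat{\frac{\partial}{\partial v_i}}\wedge\cdots\wedge \frac{\partial}{\partial v_{n+m}}.
\]

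The next step is to split this sum into the block $1\le i\le n$ and the block $n+1\le i\le n+m$. In the first block, the omitted factor $\widehat{\frac{\partial}{\partial v_i}}$ sits in $\xi_1$, and the remaining wedge factors of $\xi_2$ can be pulled to the right without introducing extra signs since they are unaltered; the resulting expression is exactly $\delta_n(w)(\xi_1)\wedge \xi_2$ (writing $\delta_n(\xi_1)$ as in the statement). In the second block, reindex by $j=i-n$ so that $(-1)^i=(-1)^n(-1)^j$ and $w(v_i)=w(v_{n+j})$; the $n$ unaltered factors of $\xi_1$ stay to the left, and what remains is $(-1)^n\,\xi_1\wedge \delta_m(w)(\xi_2)$. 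Adding the two contributions yields the claimed identity.

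For part (2), the algebra of $dv$'s is identical: ${\rm Ext}^{*}(V)$ is also the exterior algebra on the generators $dv$, so the same reduction to basis monomials $dv_1\wedge\cdots\wedge dv_{n+m}$, the same block splitting, and the same sign reindexing go through verbatim. The only substantive step is the sign bookkeeping in the $(-1)^n$ factor, which is the main (and essentially only) obstacle; no deeper input is needed beyond the anti-commutativity of $\frac{\partial}{\partial u},\frac{\partial}{\partial v}$ and of $du,dv$ that is already recorded before the lemma.
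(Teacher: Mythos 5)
Your proposal is correct and matches the paper's own argument: the paper likewise reduces to basis wedges $\frac{\partial}{\partial v_1}\wedge\cdots\wedge\frac{\partial}{\partial v_n}$ and $\frac{\partial}{\partial u_1}\wedge\cdots\wedge\frac{\partial}{\partial u_m}$, applies the definition of $\delta_{n+m}(w)$, and splits the resulting sum into the two blocks with the $(-1)^{n+j}$ sign for the second block, with part (2) noted as analogous. No gaps.
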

 
 \begin{proof}
 We  only  prove  (1)  for   $\xi_1=    \frac{\partial }{\partial  v_{1}}\wedge \cdots\wedge \frac{\partial }{\partial  v_{n}} $  and   $\xi_2=   \frac{\partial }{\partial  u_{ 1}}\wedge \cdots\wedge \frac{\partial }{\partial  u_{m}} $.      \begin{eqnarray*}
\delta_{n+m}(w)(\xi_1\wedge \xi_2)  &=& \sum_{i=1}^n  (-1)^i  
 w(v_i)  \frac{\partial }{\partial  v_1}\wedge \cdots\wedge\widehat{\frac{\partial }{\partial  v_i}}\wedge\cdots \wedge \frac{\partial }{\partial  v_n} \wedge  \frac{\partial }{\partial  u_1}\wedge \cdots\wedge \frac{\partial }{\partial  u_m}
 \\
&& +\sum_{j=1}^{m}  (-1)^{n+j}  
 w(u_j)  \frac{\partial }{\partial  v_1}\wedge \cdots\wedge  \frac{\partial }{\partial  v_n} \wedge  \frac{\partial }{\partial  u_1}\wedge\cdots\wedge \widehat{\frac{\partial }{\partial  u_j}}\wedge\cdots \wedge  \frac{\partial }{\partial  u_m}
 \\
 &=& \delta_n(\xi_1)  \wedge  \xi_2  + (-1)^n \xi_1\wedge \delta_m(\xi_2).  
 \end{eqnarray*}
 We  obtain  (1).  
 The  proof  of  (2)  is  analogous.  
 \end{proof}

\begin{proposition}\label{pr-88.8}
For    any    $w:  V\longrightarrow  R$    we  have  two  chain  complexes
\begin{eqnarray*}
&\xymatrix{
\cdots \ar[r]^-{\delta_{n+1}(w)} &{\rm  Ext}_{n }(V)\ar[r]^-{\delta_n(w)}  & {\rm  Ext}_{n-1 }(V)
\ar[r]^-{\delta_{n-1}(w)} & \cdots \ar[r]^-{\delta_{2}(w)} &{\rm  Ext}_{1 }(V) \ar[r]^-{\delta_{1}(w)}  & R\ar[r]^-{\delta_{0}(w)}   &0,   
}\\
&\xymatrix{
\cdots \ar[r]^-{\delta^{n+1}(w)} &{\rm  Ext}^{n }(V)\ar[r]^-{\delta^n(w)}  & {\rm  Ext}^{n-1 }(V)
\ar[r]^-{\delta^{n-1}(w)} & \cdots  \ar[r]^-{\delta^{2}(w)} &{\rm  Ext}^{1 }(V) \ar[r]^-{\delta^{1}(w)}  & R\ar[r]^-{\delta^{0}(w)}  &0. 
}
\end{eqnarray*}
Moreover,  if   $V$  is  a  finite  set  and   $w$  is  non-vanishing,    then  both     chain complexes  are   long  exact  sequences.  
\end{proposition}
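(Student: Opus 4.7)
My plan is to exploit the fact that $\delta_*(w)$ is a graded derivation on the exterior algebra ${\rm Ext}_*(V)$, as guaranteed by Lemma~\ref{le-abc}, and then treat the second part as a Koszul-complex acyclicity statement.

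The first assertion reduces to showing $\delta_{n-1}(w)\circ\delta_n(w)=0$ together with the cohomological analogue. Lemma~\ref{le-abc}(1) exhibits $\delta_*(w)$ as a graded derivation of odd degree $-1$ on the exterior algebra ${\rm Ext}_*(V)$, and a standard calculation shows that the square of a graded derivation of odd degree is itself a (graded) derivation. Hence it suffices to verify $\delta_*(w)^2 = 0$ on the algebra generators: but $\delta_1(w)\bigl(\frac{\partial}{\partial v}\bigr)=-w(v)\in R$ and $\delta_0(w)=0$ by definition, so $\delta_0(w)\,\delta_1(w)\bigl(\frac{\partial}{\partial v}\bigr)=0$ as needed. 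The cohomology version is proved identically using Lemma~\ref{le-abc}(2).

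For the second assertion, enumerate $V=\{v_1,\dots,v_k\}$ and select a vertex, say $v_k$, with $w(v_k)$ invertible in $R$. I recognise $({\rm Ext}_*(V),\delta_*(w))$ as the classical Koszul complex of the sequence $w(v_1),\dots,w(v_k)$. Exactness can be obtained either by induction on $k$, writing the complex as the mapping cone of multiplication by $-w(v_k)$ on the analogous complex for $V\setminus\{v_k\}$ (an isomorphism since $w(v_k)$ is a unit, hence an acyclic cone), or more concretely via the explicit contracting homotopy
\begin{equation*}
h_n:\ {\rm Ext}_n(V)\longrightarrow {\rm Ext}_{n+1}(V),\qquad h_n(\xi):=-w(v_k)^{-1}\,\frac{\partial}{\partial v_k}\wedge\xi.
\end{equation*}
Decomposing an arbitrary $\xi\in{\rm Ext}_n(V)$ as $\alpha+\frac{\partial}{\partial v_k}\wedge\beta$ with $\alpha,\beta$ supported on $V\setminus\{v_k\}$ and applying Lemma~\ref{le-abc}(1), a brief computation yields $\delta_{n+1}(w)\,h_n+h_{n-1}\,\delta_n(w)=\mathrm{id}$, forcing acyclicity. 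The cohomological case is entirely parallel, replacing $\frac{\partial}{\partial v_k}$ by $dv_k$ and using Lemma~\ref{le-abc}(2).

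The subtle point, and the main obstacle I would flag, is the interpretation of ``$w$ is non-vanishing''. In an integral domain $R$ that is not a field, $w(v)\neq 0$ only provides a non-zero-divisor, not necessarily a unit, while exactness at the terminal $R$ of the complex requires surjectivity of $\delta_1(w):{\rm Ext}_1(V)\to R$ --- equivalently, the ideal $\bigl(w(v):v\in V\bigr)$ must coincide with $R$. Thus the hypothesis ought to be read as asserting that at least one $w(v)$ is a unit of $R$ (automatic when $R$ is a field); under this reading, the construction above delivers the claimed long exact sequences.
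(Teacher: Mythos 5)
Your argument for $\delta_{n-1}(w)\circ\delta_n(w)=0$ (square of an odd derivation is an even derivation, so it suffices to check it on the degree-one generators) is a clean alternative to the paper's proof, which instead expands the double sum $\sum_{i,j}(-1)^{i+j}w(v_i)w(v_j)(\cdots)$ directly and cancels terms pairwise; both are correct. For exactness the routes genuinely diverge: the paper argues by induction on $|V|$, writing $\xi=\xi_1\wedge\frac{\partial}{\partial v_m}+\xi_2$ with $\xi_1,\xi_2$ supported on $V\setminus\{v_m\}$, separating the two resulting equations, and invoking the inductive hypothesis twice, using only that $R$ has no zero divisors; you instead produce the contracting homotopy $h_n(\xi)=-w(v_k)^{-1}\frac{\partial}{\partial v_k}\wedge\xi$, which via Lemma~\ref{le-abc} immediately gives $\delta h+h\delta=\mathrm{id}$ but requires some $w(v_k)$ to be a unit. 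The caveat you raise is not pedantry --- it is the crux, and your reading of the hypothesis is the one under which the conclusion actually holds. Indeed the statement as printed fails not only at the terminal $R$ but also in positive degrees: take $R=\mathbb{Q}[x,y]$, $V=\{v_0,v_1\}$ and $w\equiv x$; then $\frac{\partial}{\partial v_0}-\frac{\partial}{\partial v_1}$ lies in $\mathrm{Ker}\,\delta_1(w)$ while $\mathrm{Im}\,\delta_2(w)=xR\big(\frac{\partial}{\partial v_0}-\frac{\partial}{\partial v_1}\big)$, so $H_1\cong R/xR\neq 0$. The paper's induction breaks at exactly the point you identify: in Step~2 it invokes the inductive ``assertion'' for $\xi_1\in\mathrm{Ext}_0(V\setminus\{v_m\})=R$, i.e.\ surjectivity of $\delta_1(w)$ onto $R$ over the smaller vertex set, which holds only when the $w(v)$ generate the unit ideal (note the paper's own Example~\ref{ex-vmqwe01} quietly replaces the degree-zero term $R$ by the ideal $w(v_0)R+w(v_1)R$). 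So your homotopy argument, under the strengthened hypothesis that $\sum_{v\in V}w(v)R=R$, is the correct and complete proof, whereas the paper's argument under its literal hypothesis is not.
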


\begin{proof}
We  prove  the  statements  for  the  first  sequence.  The  proof for the second  sequence  is  analogous.  
     Let 
 $\frac{\partial }{\partial  v_1}\wedge \cdots\wedge \frac{\partial }{\partial  v_n}\in  {\rm  Ext}_n(V)$.  
By  definition, 
\begin{eqnarray*}
&&\delta_{n-1}(w) \delta_{n}(w)(\frac{\partial }{\partial  v_1}\wedge \cdots\wedge \frac{\partial }{\partial  v_n})\\
&=&
\delta_{n-1}(w)\Big(\sum_{i=1}^n  (-1)^i  
 w(v_i)  \frac{\partial }{\partial  v_1}\wedge \cdots\wedge\widehat{\frac{\partial }{\partial  v_i}}\wedge\cdots \wedge \frac{\partial }{\partial  v_n}\Big)\\
 &=& \sum_{i,j=1\atop  j<i}^n   (-1)^{i+j}  
 w(v_i)   w(v_j)  \frac{\partial }{\partial  v_1}\wedge \cdots\wedge\widehat{\frac{\partial }{\partial  v_j}}\wedge\cdots\wedge \widehat{\frac{\partial }{\partial  v_i}}\wedge\cdots \wedge \frac{\partial }{\partial  v_n}\\
&&  +  \sum_{i,j=1\atop  j>i}^n   (-1)^{i+j-1}  
 w(v_i)   w(v_j)  \frac{\partial }{\partial  v_1}\wedge \cdots\wedge\widehat{\frac{\partial }{\partial  v_i}}\wedge\cdots\wedge \widehat{\frac{\partial }{\partial  v_j}}\wedge\cdots \wedge \frac{\partial }{\partial  v_n}\\
 &=& 0.  
\end{eqnarray*}
The  last   equality  follows  from  the  commutativity  of   $R$.  
Thus   $\delta_{n-1}(w) \delta_{n}(w)=0$,  which  implies that     the  first  sequence  is  a  chain  complex  and    
\begin{eqnarray}\label{eq-2.cxz1}
{\rm  Im}\delta_{n}(w)  \subseteq  {\rm  Ker}  \delta_{n-1}(w).
\end{eqnarray}  
Suppose  in  addition  that  $V$  is   a  finite  set  and  $w:  V\longrightarrow  R\setminus \{0\}$ is  non-vanishing.  Let  $\xi\in {\rm  Ext}_{n-1 }(V) $  such that  $\delta_{n-1}(w)  (\xi)=0$.   We  assert  that  there exists  $\eta\in   {\rm  Ext}_{n  }(V) $  such  that  
$\delta_n(w)(\eta)=\xi$.  
Without loss  of  generality,  we  assume  $\xi\neq  0$.  
 We prove   this  assertion  by  induction on  the  cardinality  $|V|$  of  $V$.

{\it  Step~1}.  Suppose $|V|=1$.   Let  $v$  be  the  unique  element  in  $V$.   Then  
\begin{eqnarray*}
{\rm  Ext}_n(V)=
\begin{cases}
0,  &n\geq  2,\\
R(\frac{\partial}{\partial  v}),   & n=1,\\
R,  &n=0.
\end{cases} 
\end{eqnarray*}
For  any  $\xi\in {\rm  Ext}_{1 }(V) $,  write  $\xi=r\frac{\partial}{\partial  v}$  for some  $r\in  R$.  
Then  $\delta_{1}(w)  (\xi) = r w(v)$.   Since  $R$  is  assumed  to  have  no  zero  divisors  and  
$w$  is  assumed  non-vanishing,  it  follows  that  $\delta_{1}(w)  (\xi)=0$  iff  $r=0$  iff  $\xi=0$.  
The  assertion  holds.

 {\it  Step~2}.  Suppose  the  assertion  holds  for  $|V|=m-1$.   We  are  going to  prove  the  assertion  
 for  $|V|=m$.   Write  
 \begin{eqnarray}\label{eq-1133}
 \xi=\xi_1\wedge  \frac{\partial}{\partial  v_m} +\xi_2 
 \end{eqnarray}
 where  $\xi_1\in  {\rm  Ext}_{n-1}(V)$  and  $\xi_2\in  {\rm  Ext}_{n}(V)$  such  that 
  both  $\xi_1$  and  $\xi_2$  do  not  contain  $\frac{\partial}{\partial  v_m}$, i.e.   $\xi_1\in  {\rm  Ext}_{n-1}(V\setminus\{v_m\})$  and  $\xi_2\in  {\rm  Ext}_{n}(V\setminus\{v_m\})$.    
  With  the  help  of   Lemma~\ref{le-abc}~(1),   we  have 
  \begin{eqnarray}
  0&=& \delta_{n}(w)  (\xi_1\wedge  \frac{\partial}{\partial  v_m} +\xi_2) \nonumber\\
  &= & \big(\delta'_{n-1}(w) (\xi_1)\big) \wedge     \frac{\partial}{\partial  v_m} + (-1)^{n-1} \xi_1\wedge \delta_{1} (w)(\frac{\partial}{\partial  v_m})   +    \delta'_{n}(w) (\xi_2)\nonumber\\
  &=&\big(\delta'_{n-1}(w) (\xi_1)\big) \wedge     \frac{\partial}{\partial  v_m} + (-1)^{n-1} w(v_m) \xi_1    +    \delta'_{n}(w) (\xi_2),
  \label{eq-5igk1}
  \end{eqnarray}
  where  $ \delta'_{*}(w) $  denotes   the   boundary  maps  of  the  chain  complex  for  $|V|=m-1$.  
  Since  both  $\delta'_{n-1}(w) (\xi_1)$  and  $ (-1)^{n-1} w(v_m) \xi_1   +    \delta'_{n}(w) (\xi_2)$  do  not  contain  $\frac{\partial}{\partial  v_m}$,  we  have  
  \begin{eqnarray}
  &\delta'_{n-1}(w) (\xi_1)=0, \label{eq-0012}\\
  & (-1)^{n-1}  w(v_m) \xi_1    +    \delta'_{n}(w) (\xi_2)
  =0.  \label{eq-0022}
  \end{eqnarray}
  By  induction  and  (\ref{eq-0012}),  there  exists  $\eta_1\in   {\rm  Ext}_{n}(V\setminus\{v_m\})$   such  that  
  \begin{eqnarray}\label{eq-3355}
  \delta'_{n}(w) (\eta_1)=\xi_1.   
  \end{eqnarray}
   Thus  with the help of   (\ref{eq-0022}),  
  \begin{eqnarray*}
  0&=&(-1)^{n-1}  w(v_m)\delta'_{n}(w) (\eta_1)    +    \delta'_{n}(w) (\xi_2)\\
  &=& \delta'_{n}(w)  ( (-1)^{n-1}  w(v_m)  \eta_1  +  \xi_2). 
  \end{eqnarray*}
  By  induction,  there  exists   $\eta_2\in  {\rm  Ext}_{n+1}(V\setminus\{v_m\}) $  such  that  
  \begin{eqnarray}\label{eq-2233}
 (-1)^{n-1}  w(v_m)  \eta_1  +  \xi_2 =  \delta'_{n+1}(w)  (\eta_2).  
  \end{eqnarray}  
  Therefore, by  Lemma~\ref{le-abc}~(1),   (\ref{eq-1133}),  (\ref{eq-3355})  and  (\ref{eq-2233}),  
  \begin{eqnarray*}
  \xi     &=& \delta'_{n}(w) (\eta_1) \wedge  \frac{\partial}{\partial  v_m}  + (-1)^{n }  w(v_m)  \eta_1  +  \delta'_{n+1}(w)  (\eta_2)\\
  &=&  \delta_{n+1}(w) (\eta_1 \wedge    \frac{\partial}{\partial  v_m} +  \eta_2).  
  \end{eqnarray*}
  Let  $\eta= \eta_1 \wedge    \frac{\partial}{\partial  v_m} +  \eta_2$.  Then  $ \delta_{n+1}(w)(\eta) =\xi$.  The  assertion  follows  for     $|V|=m$.

  Summarizing  Step~1  and  Step~2,  by  induction on $|V|$,  we  obtain  the  assertion.   It  follows  that  
  \begin{eqnarray}\label{eq-2.cxz2}
{\rm  Im}\delta_{n+1}(w)  \supseteq  {\rm  Ker}  \delta_{n }(w).
\end{eqnarray}  
By  (\ref{eq-2.cxz1})  and  (\ref{eq-2.cxz2}),  the  first chain  complex  is  a  long  exact  sequence.  
\end{proof}

\begin{remark}
Proposition~\ref{pr-88.8}   is  a  discrete  analog  of  \cite[Proposition~3.18]{siu}. 
\end{remark}

\begin{definition}
We  call  the  chain  complexes  in  Proposition~\ref{pr-88.8}  the  {\it   Kouzul   complexes}  with  respect to  $w$   and denote them  by  $K_*(V,w;R)$  and  $K^*(V,w;R)$  respectively.  
\end{definition}

Let  $\varphi:  V\longrightarrow  V $  be  a  bijection.   We  have  an  induced  isomorphism  of  exterior  algebras 
 ${\rm  Ext}_*(\varphi):  {\rm  Ext}_*(V)\longrightarrow  {\rm  Ext}_*(V)$  sending   $\frac{\partial}{\partial  v}$  to  
 $\frac{\partial}{\partial  \varphi(v)}$  as  well  as  an induced  isomorphism  of  exterior  algebras  ${\rm  Ext}^*(\varphi):  {\rm  Ext}^*(V)\longrightarrow  {\rm  Ext}^*(V)$  sending   $d  v $  to  
 $ d  \varphi(v) $.  
 
 \begin{proposition}\label{pr-068a}
 For  any  bijection  $\varphi:  V\longrightarrow  V$  and  any  $w:  V\longrightarrow  R$  such  that  there  exists  an  endomorphism  of  rings  $ f_{\varphi,w}:  R\longrightarrow  R$   sending   $w(v)$  to  
 $w(\varphi(v))$  for  any  $v\in  V$,  we  have     induced    chain  maps  
 \begin{eqnarray}
 {\rm  Ext}_*(\varphi): &&~  K_*(V,w;R)\longrightarrow  K_*(V,w;R), 
 \label{eq-abcdxy1}\\
 {\rm  Ext}^*(\varphi): &&~  K^*(V,w;R)\longrightarrow  K^*(V,w;R).   
  \label{eq-abcdxy2}
 \end{eqnarray}
 Moreover,   if   $V$  is  a  finite  set  and   $w$  is  non-vanishing,  then  both  (\ref{eq-abcdxy1})
  and   
  (\ref{eq-abcdxy2})  are   morphisms  of  long  exact  sequences.  
 \end{proposition}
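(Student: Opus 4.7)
The plan is to verify the chain map property by a direct computation on monomial generators of the exterior algebras, exploiting the hypothesis that $f_{\varphi,w}$ intertwines $w$ with $w\circ\varphi$. Because $\delta_*(w)$ is determined by its action on exterior products of the $\frac{\partial}{\partial v}$'s, and similarly $\delta^*(w)$ on the $dv$'s, it suffices to check commutativity on a typical generator. The chain map ${\rm Ext}_*(\varphi)$ should be understood as transporting coefficients in $R$ via $f_{\varphi,w}$; this is the only interpretation that can make the relevant squares commute, since $w(v)$ and $w(\varphi(v))$ are generally distinct elements of $R$.

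For (\ref{eq-abcdxy1}), I would take $\eta = \frac{\partial}{\partial v_1}\wedge\cdots\wedge \frac{\partial}{\partial v_n}$ and compute both compositions. Applying $\delta_n(w)$ first and then ${\rm Ext}_*(\varphi)$ yields
\begin{equation*}
\sum_{i=1}^n (-1)^i\, f_{\varphi,w}(w(v_i))\, \frac{\partial}{\partial \varphi(v_1)}\wedge\cdots\wedge \widehat{\frac{\partial}{\partial \varphi(v_i)}}\wedge\cdots\wedge \frac{\partial}{\partial \varphi(v_n)},
\end{equation*}
while applying ${\rm Ext}_*(\varphi)$ first and then $\delta_n(w)$ produces the same sum but with each coefficient replaced by $w(\varphi(v_i))$. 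The two expressions coincide by the defining property $f_{\varphi,w}(w(v_i)) = w(\varphi(v_i))$. The verification for (\ref{eq-abcdxy2}) is formally identical, with $dv$ and $\delta^n(w)$ playing the roles of $\frac{\partial}{\partial v}$ and $\delta_n(w)$, and no new ideas are needed.

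For the \emph{moreover} clause, Proposition~\ref{pr-88.8} already establishes that when $V$ is finite and $w$ is non-vanishing, both $K_*(V,w;R)$ and $K^*(V,w;R)$ are long exact sequences. A chain map between two long exact sequences is tautologically a morphism of long exact sequences, so no additional argument is required. The main subtlety throughout the proof is the bookkeeping around $f_{\varphi,w}$: without this coefficient twist, the naive relabelling of generators by $\varphi$ would fail to commute with the differential, because $\delta_n(w)$ weighs each $\frac{\partial}{\partial v_i}$ by $w(v_i)$ rather than by $w(\varphi(v_i))$; this is the only nontrivial point to articulate carefully, and once it is made explicit the whole statement is a straightforward verification.
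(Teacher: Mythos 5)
Your proof is correct and follows essentially the same route as the paper's: verify that ${\rm Ext}_*(\varphi)$ commutes with $\delta_*(w)$ (resp.\ $\delta^*(w)$) square by square, and then invoke Proposition~\ref{pr-88.8} for the claim about long exact sequences. The only difference is that you carry out the computation on generators explicitly and make the $f_{\varphi,w}$-twist of coefficients explicit, a point the paper's proof leaves implicit in its asserted commutative diagrams.
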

 
 \begin{proof}
 Let  $\varphi:  V\longrightarrow  V $  be  a  bijection.   Let  $w:  V\longrightarrow  R$  be  a  map  such  that  there  exists  an  endomorphism  of  rings  $  f_{\varphi,w}:  R\longrightarrow  R$   sending   $w(v)$  to  
 $w(\varphi(v))$  for  any  $v\in  V$.   We   have  a  commutative  diagram 
 \begin{eqnarray*}
 \xymatrix{
\cdots \ar[r]^-{\delta_{n+1}(w)} &{\rm  Ext}_{n }(V)\ar[r]^-{\delta_n(w)} \ar[d]_{{\rm  Ext}_n(\varphi)} & {\rm  Ext}_{n-1 }(V)
\ar[r]^-{\delta_{n-1}(w)} \ar[d]_{{\rm  Ext}_{n-1}(\varphi)} & \cdots \ar[r]^-{\delta_{2}(w)} &{\rm  Ext}_{1 }(V) \ar[d]_{{\rm  Ext}_1(\varphi)} \ar[r]^-{\delta_{1}(w)}  & R\ar[r]^-{\delta_{0}(w)}\ar[d]_{ f_{\varphi,w}}   &0\\
\cdots \ar[r]^-{\delta_{n+1}(w)} &{\rm  Ext}_{n }(V)\ar[r]^-{\delta_n(w)}  & {\rm  Ext}_{n-1 }(V)
\ar[r]^-{\delta_{n-1}(w)} & \cdots \ar[r]^-{\delta_{2}(w)} &{\rm  Ext}_{1 }(V) \ar[r]^-{\delta_{1}(w)}  & R\ar[r]^-{\delta_{0}(w)}   &0      
}
 \end{eqnarray*}
 and  a  commutative  diagram
  \begin{eqnarray*}
 \xymatrix{
\cdots \ar[r]^-{\delta^{n+1}(w)} &{\rm  Ext}^{n }(V)\ar[r]^-{\delta^n(w)} \ar[d]_{{\rm  Ext}^n(\varphi)} & {\rm  Ext}^{n-1 }(V)
\ar[r]^-{\delta^{n-1}(w)} \ar[d]_{{\rm  Ext}^{n-1}(\varphi)} & \cdots \ar[r]^-{\delta^{2}(w)} &{\rm  Ext}^{1 }(V) \ar[d]_{{\rm  Ext}^1(\varphi)} \ar[r]^-{\delta^{1}(w)}  & R\ar[r]^-{\delta^{0}(w)}\ar[d]_{ f_{\varphi,w}}   &0\\
\cdots \ar[r]^-{\delta^{n+1}(w)} &{\rm  Ext}^{n }(V)\ar[r]^-{\delta^n(w)}  & {\rm  Ext}^{n-1 }(V)
\ar[r]^-{\delta^{n-1}(w)} & \cdots \ar[r]^-{\delta^{2}(w)} &{\rm  Ext}^{1 }(V) \ar[r]^-{\delta^{1}(w)}  & R\ar[r]^-{\delta^{0}(w)}   &0,         
}
 \end{eqnarray*}
 which give   the   chain  maps    (\ref{eq-abcdxy1})
  and   
  (\ref{eq-abcdxy2})   respectively.  
 Moreover,    if   $V$  is  a  finite  set  and   $w$  is  non-vanishing,  then  all  the  rows  in  the  last two  commutative  diagrams  are  long  exact  sequences.   Thus   (\ref{eq-abcdxy1})
  and   
  (\ref{eq-abcdxy2})   are   morphisms  of  long  exact  sequences.  
 \end{proof}

\section{Discrete  Differential  Calculus  on    Hypergraphs}\label{s3}

 An   elementary  $n$-path $v_0 v_1\ldots v_n$  on $V$  is  called   {\it  non-simplicial}    if there exist integers  $0\leq i<j\leq n$  such that  either  $v_j\prec v_i$  or  $v_j=v_i$  (cf.  \cite[Definition~4.1]{camb2023}).
 Let $\mathcal{O}_n(V)$  be  free   $R$-module  generated  by all the  non-simplicial   elementary  $n$-paths   on $V$.    Let  $\tilde  \Lambda_n(V)$  be  the   sub-$R$-module   of  $\Lambda_n(V)$  whose  canonical  generators  
  are  all  the  elementary  $n$-paths  $v_0v_1\ldots  v_n$   such  that   $v_0,v_1,\ldots,  v_n$  are  distinct  
  and  $v_0\prec   v_1\prec\cdots\prec   v_n$.   
    Let  $\tilde \Lambda_*(V)=\bigoplus _{n=0}^\infty   \tilde \Lambda_n(V)$  and    $\mathcal{O}_*(V)=\bigoplus _{n=0}^\infty  \mathcal{O}_n(V)$.  Then
     $
    \Lambda_*(V)= \tilde  \Lambda_*(V)  \oplus \mathcal{O}_*(V)$   is  a  direct  sum  of  graded  $R$-modules. 
    The  restriction  of  (\ref{eq-1.x1})   gives  a  homomorphism  of  graded  $R$-modules   
  \begin{eqnarray}\label{eq-nnv1}
 {\frac{\partial}{\partial v}}: ~~~ \tilde \Lambda_n(V)\longrightarrow \tilde \Lambda_{n-1}(V),~~~
n\in \mathbb{N}.  
\end{eqnarray}
By composing with the  canonical  quotient  map  $\pi:  \Lambda_*(V)\longrightarrow \tilde \Lambda_*(V)$  sending   $\mathcal{O}_*(V)$  to zero  and  sending  $\tilde \Lambda_*(V)$  identically to itself,   (\ref{eq-1.x2})  gives  a graded    map  of  $R$-modules  
 \begin{eqnarray}\label{eq-nnv2}
  {d v}:  ~~~  \tilde \Lambda_{n}(V)\overset{dv}{\longrightarrow}  \Lambda_{n+1}(V)\overset{\pi}{\longrightarrow}\tilde  \Lambda_{n+1}(V), ~~~n\in \mathbb{N}.  
\end{eqnarray}

 Let  $\Delta[V]$  be  the  collection of all   the  non-empty  finite  subsets  of  $V$.  
   An  element  $\sigma\in \Delta[V]$  consisting  of  $n+1$  vertices in  $V$   is  an  {\it  $n$-hyperedge}  on  $V$,  
 denoted as  $\sigma^{(n)}$  or  $|\sigma|=n+1$.    
   Without  loss  of  generality,  we  choose  the  representative    $  v_0v_1\ldots   v_n$  of  $\sigma^{(n)}$  such that    $v_0\prec  v_1\prec \cdots\prec v_n$  and  write 
 $\sigma^{(n)}=  \{v_0,v_1,\ldots,v_n\}$.   
Let
$
R_n(\Delta[V] )$   be  the  free  $R$-module   
  generated  by  all  the $n$-hyperedges on $V$.   Take   the  direct  sum
$R_*(\Delta[V] )=\bigoplus_{n= 0}^\infty  R_n(\Delta[V] )$.  
Choosing    the elementary  $n$-path  $v_0v_1\ldots  v_n$  on  $V$ satisfying 
 $v_0\prec  v_1\prec \cdots  \prec  v_n$  as   an  representative of $\sigma^{(n)}$,  
  we  have  that    $R_*(\Delta[V] )$   can  be  identified  with  
  $\tilde  \Lambda_*(V)$.   We   rewrite   (\ref{eq-nnv1})   and    (\ref{eq-nnv2})      respectively  as       homomorphisms  of  graded  $R$-modules   
   \begin{eqnarray}
    \frac{\partial}{\partial  v}: && ~~~ R_{n }(\Delta[V] )\longrightarrow  R_{n-1}(\Delta[V] ), 
    \label{eq-mmm1}\\
           dv: && ~~~ R_n(\Delta[V] )\longrightarrow    R_{n+1}(\Delta[V] )     
           \label{eq-mmm2}
\end{eqnarray}
   for $n\in \mathbb{N}$.  

A   {\it  hypergraph}    with its  vertices  from   $V$  is  an  arbitrary  subset   $\mathcal{H}\subseteq \Delta[V]$.  
  A  {\it  simplicial complex}  $\mathcal{K}$  with  its  vertices  from  $V$  
  is  a hypergraph such  that  for  any  $\sigma\in \mathcal{K}$  and  any non-empty  subset 
  $\tau\subseteq \sigma$,  it  holds  $\tau\in \mathcal{K}$.      A  hyperedge  in  a  simplicial  complex  is  called  a  {\it  simplex}.   An  {\it  independence  hypergraph}   $\mathcal{L}$  with  its  vertices  from  $V$  
  is  a hypergraph such  that  for  any  $\sigma\in \mathcal{L}$  and  any  finite   superset 
  $\tau\supseteq \sigma$,  it  holds  $\tau\in \mathcal{L}$.   
Let  $\mathcal{H}$  be  a  hypergraph  with  its  vertices  from  $V$.  
Let  $R_n(\mathcal{H} )$  be the  free   $R$-module  generated  by all  the $n$-hyperedges  in $\mathcal{H}$.
Take  the  direct  sum $R_*(\mathcal{H})=\bigoplus_{n=  0}^\infty  R_n(\mathcal{H})$.  
 Then   $R_*(\mathcal{H})$   is  a  graded  sub-$R$-module  of  $R_*(\Delta[V] ) $.  
 
 \begin{lemma}\label{le-3.1ccv}
 For  any  hypergraph   $\mathcal{H}$   with  its  vertices  from  $V$,    
 \begin{enumerate}[(1).]
 \item
 $\mathcal{H}$  is a  simplicial  complex  iff   (\ref{eq-mmm1})  induces  a  homomorphism  
 \begin{eqnarray}\label{eq-0086a}
 \frac{\partial}{\partial  v}:~~~   R_{n }(\mathcal{H})\longrightarrow   R_{n-1}(\mathcal{H})
 \end{eqnarray}
   for  any  $v\in  V$  and  any  $n\in \mathbb{N}$; 
 \item
 $\mathcal{H}$  is an   independence  hypergraph  iff   (\ref{eq-mmm2})  induces  a  homomorphism  
 \begin{eqnarray}\label{eq-0086b}
 dv:  ~~~ R_{n }(\mathcal{H})\longrightarrow   R_{n+1}(\mathcal{H})
 \end{eqnarray}
   for  any  $v\in  V$  and  any  $n\in \mathbb{N}$. 
 \end{enumerate}
 \end{lemma}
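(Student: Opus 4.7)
The plan is to reduce both equivalences to explicit single-term formulas for $\frac{\partial}{\partial v}$ and $\pi\circ dv$ on the canonical generators of $R_*(\mathcal{H})$, after which both directions follow by a direct iteration argument.

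For (1), I would first compute from (\ref{eq-1.1}) that if $\sigma=\{v_0,v_1,\ldots,v_n\}$ is written as $v_0v_1\ldots v_n$ with $v_0\prec v_1\prec\cdots\prec v_n$, then $\frac{\partial}{\partial v}(\sigma)=0$ whenever $v\notin\sigma$, and $\frac{\partial}{\partial v}(\sigma)=(-1)^i(\sigma\setminus\{v_i\})$ when $v=v_i$. The forward direction is then immediate: if $\mathcal{H}$ is a simplicial complex and $v\in\sigma$, the non-empty subset $\sigma\setminus\{v\}$ lies in $\mathcal{H}$, so the restriction (\ref{eq-0086a}) is well defined. For the converse, given $\sigma\in\mathcal{H}$ and a non-empty subset $\tau\subseteq\sigma$, I would enumerate $\sigma\setminus\tau=\{w_1,\ldots,w_k\}$ and successively apply $\frac{\partial}{\partial w_1},\ldots,\frac{\partial}{\partial w_k}$; by hypothesis each intermediate image lies in $R_*(\mathcal{H})$, and the outcome is a unit multiple of the basis element $\tau$, which forces $\tau\in\mathcal{H}$.

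For (2), the key computation is the analogous single-term formula for $\pi\circ dv$. Taking $\sigma=u_0u_1\ldots u_{n-1}$ with $u_0\prec\cdots\prec u_{n-1}$: if $v\in\sigma$ then every term of (\ref{eq-2.x5}) contains $v$ twice and is therefore non-simplicial, so $\pi$ kills every summand and $dv(\sigma)=0$; if $v\notin\sigma$ and $k$ is the unique index with $u_{k-1}\prec v\prec u_k$ (using the conventions $u_{-1}=-\infty$ and $u_n=+\infty$), then inserting $v$ at position $k$ yields an ascending distinct path, while any other insertion position creates a descending adjacent pair involving $v$ and is discarded by $\pi$; hence $dv(\sigma)=(-1)^k(\sigma\cup\{v\})$. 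The forward direction follows because $\sigma\cup\{v\}\supseteq\sigma$ lies in any independence hypergraph containing $\sigma$, and the converse follows by iterating $dw_1,\ldots,dw_k$ over $\tau\setminus\sigma=\{w_1,\ldots,w_k\}$ for an arbitrary finite superset $\tau$ of $\sigma$, so that $\tau\in\mathcal{H}$.

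The hard part will be the sign-and-ordering bookkeeping that isolates the unique surviving summand of $\pi\circ dv$ in part (2); one must verify that for any insertion position $i\neq k$, the resulting path has a descending adjacency precisely at $v$ (either $(v,u_i)$ when $i<k$, since $u_i\prec v$, or $(u_{i-1},v)$ when $i>k$, since $v\prec u_{i-1}$), and hence is non-simplicial. Once that single-term formula is secured, both equivalences reduce cleanly to the defining closure properties of simplicial complexes and independence hypergraphs.
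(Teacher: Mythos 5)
Your proposal is correct and follows essentially the same route as the paper: establish the single-term formulas $\frac{\partial}{\partial v}(\sigma)=\pm(\sigma\setminus\{v\})$ and $dv(\sigma)=\pm(\sigma\sqcup\{v\})$ on generators, read off the forward implications from the closure properties of simplicial complexes and independence hypergraphs, and obtain the converses by iterating the operators over $\sigma\setminus\tau$ (resp.\ $\tau\setminus\sigma$). The only difference is that you spell out the sign-and-ordering argument showing that exactly one summand of $\pi\circ dv$ survives, which the paper simply asserts.
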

 \begin{proof}
 (1).  Suppose  $\mathcal{H}$  is  a simplicial  complex.  Then  for  any    simplex  $\sigma\in \mathcal{H}$   and  any   $v\in  \sigma$,  we  have  that     $\sigma\setminus\{v\}$  is  either the  empty-set  or  a  simplex  in  $\mathcal{H}$.    Since 
 \begin{eqnarray*}
 \frac{\partial}{\partial  v} (\sigma)=
 \begin{cases}
 0,   & {\rm~if~} v\notin \sigma  {\rm ~or~ } \dim\sigma=0,\\
  \pm  (\sigma\setminus\{v\}),    &  {\rm~if~}  v\in \sigma {\rm~and~}  \dim\sigma\geq 1, 
 \end{cases}
 \end{eqnarray*}
 we  have    the  induced  homomorphism   $\frac{\partial}{\partial  v}:   R_{n }(\mathcal{H})\longrightarrow   R_{n-1}(\mathcal{H})$  for  any  $v\in  V$  and  any  $n\in \mathbb{N}$.

 Conversely,  suppose  we  have  an  induced   homomorphism   $\frac{\partial}{\partial  v}:   R_{n }(\mathcal{H})\longrightarrow   R_{n-1}(\mathcal{H})$  for  any  $v\in  V$  and  any  $n\in \mathbb{N}$.  
 Then  for  any  $\sigma\in \mathcal{H}$  such  that  $\dim\sigma\geq 1$,  we  have that  $\sigma\setminus\{v\}$ 
 is  a  hyperedge  in  $\mathcal{H}$    for  any  $v\in \sigma$.  By  induction,  for  any  non-empty  subset  $\tau\subseteq\sigma$,  we have $\tau\in \mathcal{H}$.    Therefore,  $\mathcal{H}$  is  a  simplicial complex.

 (2).   Suppose  $\mathcal{H}$  is  an  independence  hypergraph.   Then  for  any    hyperedge   $\sigma\in \mathcal{H}$   and  any   $v\notin  \sigma$,  we  have  that     $\sigma\sqcup \{v\}$  is    a  hyperedge  in  $\mathcal{H}$.   Since  
  \begin{eqnarray*}
 dv (\sigma)=
 \begin{cases}
 0,   & {\rm~if~} v\in \sigma,\\
  \pm  (\sigma\sqcup\{v\}),    &  {\rm~if~}  v\notin \sigma, 
 \end{cases}
 \end{eqnarray*}
  we  have    the  induced  homomorphism   $dv:   R_{n }(\mathcal{H})\longrightarrow   R_{n+1}(\mathcal{H})$  for  any  $v\in  V$  and  any  $n\in \mathbb{N}$.

  Conversely,  suppose   we  have    the  induced  homomorphism   $dv:   R_{n }(\mathcal{H})\longrightarrow   R_{n+1}(\mathcal{H})$  for  any  $v\in  V$  and  any  $n\in \mathbb{N}$.   Then  for  any  $\sigma\in \mathcal{H}$
   and  any  $v\in V\setminus \sigma$,   we   have  that  $\sigma\sqcup  \{v\} $  is  a  hyperedge  in  $ \mathcal{H}$.    By  induction,  for  any  finite   superset  $\tau\supseteq\sigma$  such  that   $\tau\subseteq   V$,  we have $\tau\in \mathcal{H}$.    Therefore,  $\mathcal{H}$  is  an   independence  hypergraph.  
 \end{proof}
 
 \begin{definition}\label{def-22222}
 Let  $\mathcal{H}$  be  a  hypergraph  with  vertices  from  $V$.   Let  $v\in  V$.  
 We  say that  $\frac{\partial}{\partial  v}$  is  {\it   $\mathcal{H}$-admissible}   if  (\ref{eq-0086a})    is  well-defined  for  any  $n\in \mathbb{N}$
 and  say  that  $dv$  is  {\it   $\mathcal{H}$-admissible}   if  (\ref{eq-0086b})   is  well-defined  for  any  $n\in \mathbb{N}$.   We  use  $V_\mathcal{H}$  to  denote  the  collection of  all  $v\in  V$  such  that  $\frac{\partial}{\partial  v}$  is   $\mathcal{H}$-admissible   and    use  $V^\mathcal{H}$  to  denote  the  collection of  all  $v\in  V$  such  that  $dv$  is   $\mathcal{H}$-admissible.   
 \end{definition}
 

 \begin{theorem}[Main  Result  I]
 \label{th-3.2ccv}
 Let  $\mathcal{H}$  be  a  hypergraph  with  its  vertices  from  $V$.  
 \begin{enumerate}[(1).]
 \item
 For    any    $w:  V_\mathcal{H}\longrightarrow  R$    we  have  a  chain  complex 
\begin{eqnarray*}
\xymatrix{
\cdots \ar[r]^-{\delta_{n+1}(w)} &{\rm  Ext}_{n }(V_\mathcal{H})\ar[r]^-{\delta_n(w)}  & {\rm  Ext}_{n-1 }(V_\mathcal{H})
\ar[r]^-{\delta_{n-1}(w)} & \cdots \ar[r]^-{\delta_{2}(w)} &{\rm  Ext}_{1 }(V_\mathcal{H}) \ar[r]^-{\delta_{1}(w)}  & R\ar[r]^-{\delta_{0}(w)}   &0.    
} 
\end{eqnarray*}
Moreover,  if   $V_\mathcal{H}$  is  a  finite  set  and   $w$  is  non-vanishing  on  $V_\mathcal{H}$,    then  the     chain complex   is  a    long  exact  sequence;   
\item
For    any    $w:  V^\mathcal{H}\longrightarrow  R$    we  have  a  chain  complex 
\begin{eqnarray*}
\xymatrix{
\cdots \ar[r]^-{\delta^{n+1}(w)} &{\rm  Ext}^{n }(V^\mathcal{H})\ar[r]^-{\delta^n(w)}  & {\rm  Ext}^{n-1 }(V^\mathcal{H})
\ar[r]^-{\delta^{n-1}(w)} & \cdots  \ar[r]^-{\delta^{2}(w)} &{\rm  Ext}^{1 }(V^\mathcal{H}) \ar[r]^-{\delta^{1}(w)}  & R\ar[r]^-{\delta^{0}(w)}  &0.
}
\end{eqnarray*}
Moreover,  if   $V^\mathcal{H}$  is  a  finite  set  and   $w$  is  non-vanishing  on  $V^\mathcal{H}$,    then  the     chain complex   is  a    long  exact  sequence.  
\end{enumerate}
 \end{theorem}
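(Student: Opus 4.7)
The plan is to reduce Theorem~\ref{th-3.2ccv} to a direct application of Proposition~\ref{pr-88.8}. The essential observation is that while the $\mathcal{H}$-admissibility condition of Definition~\ref{def-22222} is used to select the subsets $V_\mathcal{H}, V^\mathcal{H}\subseteq V$, these subsets are themselves perfectly ordinary discrete sets to which the Kouzul construction of Section~\ref{s2} applies verbatim. The statement of the theorem involves only the exterior algebras ${\rm Ext}_*(V_\mathcal{H})$ and ${\rm Ext}^*(V^\mathcal{H})$ together with the maps $\delta_n(w)$ and $\delta^n(w)$, and these are defined without any further reference to the hypergraph $\mathcal{H}$ once the subsets of admissible vertices have been fixed.

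For part (1), I would first note that ${\rm Ext}_*(V_\mathcal{H})$ sits inside ${\rm Ext}_*(V)$ as the exterior subalgebra generated by $\{\frac{\partial}{\partial v} : v \in V_\mathcal{H}\}$, and that the map $\delta_n(w)$ built from the restricted weight $w\colon V_\mathcal{H}\to R$ preserves this subalgebra since, by its definition, $\delta_n(w)$ only removes wedge factors rather than introducing new ones. Then I would invoke Proposition~\ref{pr-88.8} with $V$ replaced by $V_\mathcal{H}$ and the weight function $w\colon V_\mathcal{H}\to R$ playing the role of the weight function on the ambient vertex set. This immediately yields both the chain-complex assertion and, under the assumption that $V_\mathcal{H}$ is finite and $w$ is non-vanishing, the long exact sequence assertion.

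For part (2), the argument is entirely symmetric. With $V$ replaced by $V^\mathcal{H}$ and the contravariant exterior algebra ${\rm Ext}^*(V^\mathcal{H})$ in place of ${\rm Ext}_*(V_\mathcal{H})$, Proposition~\ref{pr-88.8} (applied to its second Kouzul complex) gives the chain complex of $dv$-generated exterior powers with differential $\delta^n(w)$, and again under the finiteness and non-vanishing hypotheses the long exact sequence conclusion follows.

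The main obstacle, if it can be called one, is conceptual rather than technical: one must verify that the $\mathcal{H}$-admissibility of $\frac{\partial}{\partial v}$ (resp.\ of $dv$) is used only to identify the correct vertex subset on which $w$ is prescribed, and does not impose any additional compatibility between $\delta_n(w)$ and the graded module $R_*(\mathcal{H})$ that would prevent a purely algebraic invocation of Proposition~\ref{pr-88.8}. Once this is spelled out, the theorem is a formal corollary of the previously established proposition, with essentially no further computation required.
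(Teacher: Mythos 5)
Your proposal is correct and follows essentially the same route as the paper: the paper's proof likewise observes that ${\rm Ext}_*(V_\mathcal{H})$, ${\rm Ext}^*(V^\mathcal{H})$ and the maps $\delta_n(w)$, $\delta^n(w)$ are well-defined once the admissible vertex subsets are fixed, and then substitutes $V_\mathcal{H}$ (resp. $V^\mathcal{H}$) for $V$ in Proposition~\ref{pr-88.8}. Your extra remarks about admissibility serving only to select the vertex subset are a faithful elaboration of what the paper leaves implicit.
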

 
 \begin{proof}
 By  Definition~\ref{def-22222},   both  ${\rm  Ext}_*(V_\mathcal{H})$  and  $\delta_n(w):  {\rm  Ext}_n(V_\mathcal{H})\longrightarrow  {\rm  Ext}_{n-1}(V_\mathcal{H})$   are   well-defined.   
 Substituting  $V$  in   the  first  chain  complex  in  Proposition~\ref{pr-88.8}  with  $V_\mathcal{H}$,    
 we  obtain   (1).   Similarly,  both  ${\rm  Ext}^*(V^\mathcal{H})$  and  $\delta^n(w):  {\rm  Ext}^n(V^\mathcal{H})\longrightarrow  {\rm  Ext}^{n-1}(V^\mathcal{H})$   are   well-defined.   Substituting   $V$  in   the  second  chain  complex  in  Proposition~\ref{pr-88.8}  with  $V^\mathcal{H}$,     
 we  obtain   (2).   
 \end{proof}
 
 \begin{definition}
 We  call  the  chain  complexes  in  Theorem~\ref{th-3.2ccv}  the  {\it   Kouzul   complexes}  of    $\mathcal{H}$  with  respect to  $w$   and denote them  by  $K_*(\mathcal{H},w;R)$  and  $K^*(\mathcal{H},w;R)$  respectively.  
 \end{definition}
 
 The  next  two  corollaries  follow  from  Lemma~\ref{le-3.1ccv}  and  Theorem~\ref{th-3.2ccv}. 
 
 \begin{corollary}\label{co-3.3mkw}
  Let  $\mathcal{K}$  be  a  simplicial  complex  with  its  vertices  from  $V$.  For    any    $w:  V\longrightarrow  R$,   $K_*(\mathcal{K},w;R)$   is   the  chain  complex 
\begin{eqnarray*}
&\xymatrix{
\cdots \ar[r]^-{\delta_{n+1}(w)} &{\rm  Ext}_{n }(V)\ar[r]^-{\delta_n(w)}  & {\rm  Ext}_{n-1 }(V)
\ar[r]^-{\delta_{n-1}(w)} & \cdots \ar[r]^-{\delta_{2}(w)} &{\rm  Ext}_{1 }(V) \ar[r]^-{\delta_{1}(w)}  & R\ar[r]^-{\delta_{0}(w)}   &0. 
}
\end{eqnarray*}
Moreover,  if   $V$  is  a  finite  set  and   $w$  is  non-vanishing,    then    $K_*(\mathcal{K},w;R)$    is  a    long  exact  sequence. 
 \end{corollary}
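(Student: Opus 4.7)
The plan is to reduce this corollary directly to Theorem~\ref{th-3.2ccv}(1) by identifying $V_\mathcal{K}$ with $V$ when $\mathcal{K}$ is a simplicial complex. The key observation is that simpliciality is exactly the admissibility condition for the partial derivatives, packaged in Lemma~\ref{le-3.1ccv}(1).

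First I would invoke Lemma~\ref{le-3.1ccv}(1) in its ``simplicial complex $\Rightarrow$ induced boundary'' direction: since $\mathcal{K}$ is a simplicial complex, for every $v\in V$ the map $\frac{\partial}{\partial v}$ descends to a well-defined homomorphism $R_n(\mathcal{K})\to R_{n-1}(\mathcal{K})$ for every $n\in\mathbb{N}$. By Definition~\ref{def-22222}, this is precisely the statement that $\frac{\partial}{\partial v}$ is $\mathcal{K}$-admissible for every $v\in V$, so $V_\mathcal{K}=V$.

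Next I would substitute this identification into Theorem~\ref{th-3.2ccv}(1). With $V_\mathcal{K}=V$ we have ${\rm Ext}_n(V_\mathcal{K})={\rm Ext}_n(V)$ and the boundary maps $\delta_n(w)$ coincide term by term with the ones in the first chain complex of Proposition~\ref{pr-88.8}. Hence $K_*(\mathcal{K},w;R)$ is literally the Kouzul complex $K_*(V,w;R)$, which is the displayed sequence in the statement. The long-exactness assertion then follows at once from the second half of Theorem~\ref{th-3.2ccv}(1): if $V=V_\mathcal{K}$ is finite and $w$ is non-vanishing on $V$, the Kouzul complex is a long exact sequence.

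There is really no obstacle here; the content of the corollary is just the unpacking of Definition~\ref{def-22222} for the special case $\mathcal{H}=\mathcal{K}$. The only thing worth being careful about is making the identification $V_\mathcal{K}=V$ explicit and noting that the $\delta_n(w)$ in Theorem~\ref{th-3.2ccv}(1) and in Proposition~\ref{pr-88.8} are defined by the same formula on ${\rm Ext}_n(V)$, so no further verification of the differentials is required.
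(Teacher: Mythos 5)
Your proposal is correct and matches the paper's own argument exactly: the paper also deduces $V_\mathcal{K}=V$ from Lemma~\ref{le-3.1ccv}~(1) and then applies Theorem~\ref{th-3.2ccv}~(1). Your version simply spells out the identification of the $\delta_n(w)$ maps in slightly more detail, which is fine.
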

 
 \begin{proof}
 By  Lemma~\ref{le-3.1ccv}~(1),  we  have  $V_\mathcal{K}=V$.  The  corollary follows  from  Theorem~\ref{th-3.2ccv}~(1).  
 \end{proof}
 
  \begin{corollary}\label{co-3.6mkw}
  Let  $\mathcal{L}$  be  an   independence   hypergraph  with  its  vertices  from  $V$.  
For    any    $w:  V\longrightarrow  R$,      $K^*(\mathcal{L},w;R)$    is   the   chain  complex 
\begin{eqnarray*}
&\xymatrix{
\cdots \ar[r]^-{\delta^{n+1}(w)} &{\rm  Ext}^{n }(V)\ar[r]^-{\delta^n(w)}  & {\rm  Ext}^{n-1 }(V)
\ar[r]^-{\delta^{n-1}(w)} & \cdots \ar[r]^-{\delta^{2}(w)} &{\rm  Ext}^{1 }(V) \ar[r]^-{\delta^{1}(w)}  & R\ar[r]^-{\delta^{0}(w)}   &0. 
}
\end{eqnarray*}
Moreover,  if   $V$  is  a  finite  set  and   $w$  is  non-vanishing,    then   $K^*(\mathcal{L},w;R)$   is  a    long  exact  sequence. 
 \end{corollary}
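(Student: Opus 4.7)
The plan is to mirror the proof of Corollary~\ref{co-3.3mkw} almost verbatim, simply swapping the roles of $\frac{\partial}{\partial v}$ and $dv$ and invoking the second halves of Lemma~\ref{le-3.1ccv} and Theorem~\ref{th-3.2ccv} in place of the first. The whole argument should therefore be two sentences.

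First I would apply Lemma~\ref{le-3.1ccv}~(2) to the independence hypergraph $\mathcal{L}$. Since $\mathcal{L}$ is assumed to be an independence hypergraph, the lemma guarantees that for every $v\in V$ and every $n\in\mathbb{N}$, the map $dv$ restricts to a well-defined homomorphism $R_n(\mathcal{L})\to R_{n+1}(\mathcal{L})$. By Definition~\ref{def-22222}, this is precisely the statement that $dv$ is $\mathcal{L}$-admissible for every $v\in V$, i.e.\ $V^\mathcal{L}=V$.

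Next, having identified $V^\mathcal{L}$ with $V$, I would substitute directly into the statement of Theorem~\ref{th-3.2ccv}~(2). This produces the Kouzul complex $K^*(\mathcal{L},w;R)$ with exterior algebras ${\rm Ext}^n(V)$ at each slot and boundary maps $\delta^n(w)$, which is exactly the chain complex displayed in the corollary. The ``moreover'' clause is then immediate: under the additional hypotheses that $V$ is finite and $w$ is non-vanishing, the same substitution $V^\mathcal{L}=V$ converts the long-exact-sequence conclusion of Theorem~\ref{th-3.2ccv}~(2) into the long-exact-sequence conclusion stated here.

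There is no genuine obstacle: the only content is the observation $V^\mathcal{L}=V$ supplied by Lemma~\ref{le-3.1ccv}~(2), after which Theorem~\ref{th-3.2ccv}~(2) delivers both assertions with no further work. The proof is exactly dual to that of Corollary~\ref{co-3.3mkw}, and I would write it in the same compact format for stylistic consistency.
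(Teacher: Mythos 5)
Your proposal is correct and matches the paper's own proof exactly: the paper likewise invokes Lemma~\ref{le-3.1ccv}~(2) to conclude $V^\mathcal{L}=V$ and then cites Theorem~\ref{th-3.2ccv}~(2). Nothing further is needed.
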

 
   \begin{proof}
  By  Lemma~\ref{le-3.1ccv}~(2),       we  have  $V^\mathcal{L}=V$.  The  corollary follows  from  Theorem~\ref{th-3.2ccv}~(2).  
 \end{proof}
 
 \begin{definition}
 Let  $\mathcal{H}$   be   a  hypergraph  with  vertices  from  $V$   and   let   $\mathcal{H}'$  be  a   hypergraph   with    vertices  from  $V'$.  A  {\it  morphism}  of  hypergraphs  $\varphi:  \mathcal{H}\longrightarrow  \mathcal{H}'$  is  a  map  $\varphi:  V\longrightarrow  V'$ 
 such  that  for  any  $\sigma\in \mathcal{H}$,  it  holds   $f(\sigma)\in  \mathcal{H}'$  where  $f(\sigma)$  is   given  by  $f(\sigma)=\{f(v)\mid  v\in  \sigma\}$.    In  particular,  
 \begin{enumerate}[(1).]
 \item
 if  both  $\mathcal{H}$  and  $\mathcal{H}'$  are  simplicial  complexes,  then  $\varphi$  is  called  a  {\it  simplicial  map};  
 
 \item
 if  both  $\mathcal{H}$  and  $\mathcal{H}'$  are  independence  hypergraphs,  then  $\varphi$  is  called  a  {\it  morphism  of  independence hypergraphs}.  
 \end{enumerate}
 \end{definition}
 
 \begin{remark}
 Let  ${\bf H}$  be the  category    whose  objects  are  hypergraphs and whose  morphisms are  morphisms of  hypergraphs.  Let  ${\bf  K}$  be  the  category  whose  objects  are  simplicial  complexes and whose  morphisms are  simplicial maps.  Let   ${\bf  L}$  be  the category  whose  objects  are  independence  hypergraphs
  and  whose  morphisms are  morphisms  of  independence  hypergraphs.  
  It  is  direct  that    both  ${\bf  K}$  and  ${\bf  L}$  are  full  sub-categories  of  ${\bf  H}$.  
  \end{remark}
 
 \begin{proposition}\label{pr-kkh881}
 Let  $\varphi: \mathcal{K}\longrightarrow \mathcal{K}'$  be  a  simplicial map      induced  by  a  bijection 
   $\varphi:  V\longrightarrow  V$   of  the  vertices.    Suppose we  have  a  map  $w:  V\longrightarrow  R$  such  that  there  exists  an  endomorphism  of  rings  $ f_{\varphi,w}:  R\longrightarrow  R$   sending   $w(v)$  to  
 $w(\varphi(v))$  for  any  $v\in  V$.     Then   we  have   an   induced    chain  map   
 \begin{eqnarray}
 {\rm  Ext}_*(\varphi): ~~~  K_*(\mathcal{K},w;R)\longrightarrow  K_*(\mathcal{K}',w;R).   
  \label{eq-abcdz1}
 \end{eqnarray}
 Moreover,   if   $V$  is  a  finite  set  and   $w$  is  non-vanishing,  then     (\ref{eq-abcdz1})
  is  a    morphism   of  long  exact  sequences.  
 \end{proposition}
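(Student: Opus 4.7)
The plan is to reduce this proposition to the already-proved Proposition~\ref{pr-068a} by first identifying the Kouzul complexes $K_*(\mathcal{K},w;R)$ and $K_*(\mathcal{K}',w;R)$ with the ambient Kouzul complex $K_*(V,w;R)$. The crucial observation is that Lemma~\ref{le-3.1ccv}~(1) gives $V_\mathcal{K}=V$ for any simplicial complex $\mathcal{K}$ on $V$, and hence by Corollary~\ref{co-3.3mkw} the Kouzul complex $K_*(\mathcal{K},w;R)$ does not actually depend on $\mathcal{K}$ at all — it equals $K_*(V,w;R)$. The same applies to $\mathcal{K}'$, so the two domain and codomain chain complexes of (\ref{eq-abcdz1}) are literally the same object.

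First I would invoke Corollary~\ref{co-3.3mkw} twice to get the identifications $K_*(\mathcal{K},w;R)=K_*(V,w;R)=K_*(\mathcal{K}',w;R)$. Then I would apply Proposition~\ref{pr-068a} directly to the bijection $\varphi\colon V\to V$ and the weight $w$, whose compatibility with $f_{\varphi,w}$ is supplied by the hypothesis. This produces the chain map ${\rm Ext}_*(\varphi)\colon K_*(V,w;R)\to K_*(V,w;R)$, which under the above identifications is exactly the desired chain map (\ref{eq-abcdz1}). No additional verification of commuting squares is needed because Proposition~\ref{pr-068a} already displays the commutative diagram whose rows are the Kouzul complexes.

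For the moreover part, once $V$ is finite and $w$ is non-vanishing, Proposition~\ref{pr-068a} asserts that (\ref{eq-abcdxy1}) is a morphism of long exact sequences; transporting this back through the identifications above gives the corresponding statement for (\ref{eq-abcdz1}).

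I do not expect any real obstacle here: the content of the proposition is essentially that Proposition~\ref{pr-068a} carries over to the setting of simplicial complexes, and the fact that every vertex is $\mathcal{K}$-admissible trivialises the passage. The only minor point to check explicitly is that the simplicial map hypothesis on $\varphi$ is consistent with — though not actually used by — the construction, since the Kouzul complex is built from $V_\mathcal{K}$ rather than from the hyperedges of $\mathcal{K}$; this is where the analogous statement for general hypergraphs would require more care, but for simplicial complexes it is automatic.
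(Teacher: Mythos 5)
Your proposal is correct and follows essentially the same route as the paper: invoke Corollary~\ref{co-3.3mkw} (via Lemma~\ref{le-3.1ccv}) to get $V_\mathcal{K}=V_{\mathcal{K}'}=V$, identify both Kouzul complexes with $K_*(V,w;R)$, and then apply Proposition~\ref{pr-068a} for both the chain map and the long-exact-sequence statement. Your closing observation that the simplicial structure of $\varphi$ is not actually used beyond guaranteeing admissibility of every vertex is accurate and consistent with the paper's treatment.
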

 
 \begin{proof}
 By  Corollary~\ref{co-3.3mkw},     $V_\mathcal{K}= V_{\mathcal{K}'} =  V$.   
 Thus  by  Proposition~\ref{pr-068a},  we  have  a  chain  map  (\ref{eq-abcdz1}).    If   $V$  is  a  finite  set  and   $w$  is  non-vanishing,  then  (\ref{eq-abcdz1})  will  be  a  morphism   of  long  exact  sequences.  
 \end{proof}
 
  \begin{proposition}\label{pr-kkh882}
 Let  $\varphi: \mathcal{L}\longrightarrow \mathcal{L}'$  be  a  morphism  of  independence  hypergraphs       induced  by  a  bijection 
   $\varphi:  V\longrightarrow  V$   of  the  vertices.    Suppose we  have  a  map  $w:  V\longrightarrow  R$  such  that  there  exists  an  endomorphism  of  rings  $ f_{\varphi,w}:  R\longrightarrow  R$   sending   $w(v)$  to  
 $w(\varphi(v))$  for  any  $v\in  V$.     Then   we  have   an    induced    chain  map  
 \begin{eqnarray}
 {\rm  Ext}^*(\varphi): ~~~  K^*(\mathcal{L},w;R)\longrightarrow  K^*(\mathcal{L}',w;R).   
  \label{eq-abcdz2}
 \end{eqnarray}
 Moreover,   if   $V$  is  a  finite  set  and   $w$  is  non-vanishing,  then     (\ref{eq-abcdz2})  is  a  morphism   of  long  exact  sequences.  
 \end{proposition}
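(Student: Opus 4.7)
The plan is to mirror the argument used for Proposition~\ref{pr-kkh881}, exchanging the role of ${\rm Ext}_*$ with ${\rm Ext}^*$ and the role of the simplicial complex with the independence hypergraph. The essential point is that we have already developed the abstract chain-map machinery at the level of exterior algebras on the entire vertex set in Proposition~\ref{pr-068a}, so the task reduces to checking that for an independence hypergraph the relevant vertex set of admissibility exhausts $V$, which then allows direct substitution.

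First I would invoke Corollary~\ref{co-3.6mkw}: since $\mathcal{L}$ and $\mathcal{L}'$ are independence hypergraphs, Lemma~\ref{le-3.1ccv}(2) gives $V^{\mathcal{L}} = V^{\mathcal{L}'} = V$, so by definition $K^*(\mathcal{L},w;R)$ and $K^*(\mathcal{L}',w;R)$ are both literally the Kouzul complex $K^*(V,w;R)$ from Proposition~\ref{pr-88.8}, built on the full exterior algebra ${\rm Ext}^*(V)$. This reduces the present statement to an assertion about a single Kouzul complex together with a self-map.

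Next I would apply Proposition~\ref{pr-068a}, specifically the chain map~(\ref{eq-abcdxy2}). The hypothesis provides a bijection $\varphi: V \to V$ and a ring endomorphism $f_{\varphi,w}: R \to R$ with $f_{\varphi,w}(w(v)) = w(\varphi(v))$, which is exactly what is required there. This yields the induced chain map ${\rm Ext}^*(\varphi): K^*(V,w;R) \to K^*(V,w;R)$, which under the identifications $K^*(\mathcal{L},w;R) = K^*(V,w;R) = K^*(\mathcal{L}',w;R)$ gives the desired map~(\ref{eq-abcdz2}).

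Finally, for the ``moreover'' clause, if $V$ is finite and $w$ is non-vanishing, then by the second half of Proposition~\ref{pr-88.8} both the source and target are long exact sequences, and by the second half of Proposition~\ref{pr-068a} the chain map~(\ref{eq-abcdxy2}) is a morphism of long exact sequences; this immediately transfers to~(\ref{eq-abcdz2}). There is no genuine obstacle here: the content is essentially a bookkeeping verification that the ${\rm Ext}^*$-version of the argument for Proposition~\ref{pr-kkh881} goes through verbatim once Corollary~\ref{co-3.6mkw} is noted.
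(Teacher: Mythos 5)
Your proposal is correct and follows essentially the same route as the paper: identify $V^{\mathcal{L}}=V^{\mathcal{L}'}=V$ via Lemma~\ref{le-3.1ccv}(2)/Corollary~\ref{co-3.6mkw}, then invoke Proposition~\ref{pr-068a} for the chain map~(\ref{eq-abcdxy2}) and its exactness statement. (Incidentally, the paper's own proof cites Corollary~\ref{co-3.3mkw} here, which appears to be a typo for Corollary~\ref{co-3.6mkw}; your citation is the correct one.)
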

 
 \begin{proof}
  By  Corollary~\ref{co-3.3mkw},     $V^\mathcal{L}= V^{\mathcal{L}'} =  V$.   
 Thus  by  Proposition~\ref{pr-068a},  we  have  a  chain  map  (\ref{eq-abcdz2}).    If   $V$  is  a  finite  set  and   $w$  is  non-vanishing,  then  (\ref{eq-abcdz2})  will  be  a  morphism   of  long  exact  sequences.  
 \end{proof}

 For  any  set  $X$,  we  use  $R(X)$  to  denote  the  free  $R$-module  generated  by    the  elements  in   $X$.  
 Each  element   in  $R(X)$  is  a  finite  linear  combination  $\sum_{i=1}^n  r_i  x_i$,   where   $n$  is   a  positive  integer,      $r_i\in  R$  and  $x_i\in  X$.  
 For  any    subset  $A\subseteq  R$,  we  use  $\sum_{a\in  A} a  R$  to  denote  the  ideal generated  by  $A$   
 consisting  of  the  elements    of  the  form  
 $ a_1 r_1+ \cdots + a_n r_n$,  where    $ r_1,\ldots, r_n\in  R$,  $a_1,\ldots, a_n\in  A$   and    $n$  is  
 a  positive   integer.   
 
 \begin{example}\label{ex-vmqwe01}
 Let  $V=\{v_0,v_1\}$.   Let  $w:  V\longrightarrow  R$.   Then  $\Delta[V]=\{\{v_0\},  \{v_1\},  \{v_0,v_1\}\}$   is  a  simplicial  complex  and  is  also  
 an  independence  hypergraph.  
 
 \begin{enumerate}[(1).]
 \item
  By  Theorem~\ref{th-3.2ccv}~(1)  or Corollary~\ref{co-3.3mkw},   the  chain  complex  $K_*(\Delta[V],w;R)$  is 
 \begin{eqnarray*}
 \xymatrix{
 0\ar[r]  &  R\Big(\dfrac{\partial}{\partial  v_0}\wedge\dfrac{\partial}{\partial  v_1}\Big)\ar[r]^-{\delta_2(w)}  &  R\Big(\dfrac{\partial}{\partial  v_0}\Big)\oplus  R\Big(\dfrac{\partial}{\partial  v_1}\Big)
 \ar[r]^-{\delta_1(w)}  &   w(v_0)   R  +  w(v_1)  R \ar[r]^-{\delta_0(w)}&   0,  
 }
 \end{eqnarray*} 
 where  
\begin{eqnarray*}
&\delta_2(w) \Big(\dfrac{\partial}{\partial  v_0}\wedge\dfrac{\partial}{\partial  v_1}\Big) =  w(v_0) \dfrac{\partial}{\partial  v_1}
- w(v_1)\dfrac{\partial}{\partial  v_0},  \\
&\delta_1(w) \Big(\dfrac{\partial}{\partial  v_0}\Big)  =   w(v_0),\\     
&\delta_1(w) \Big(\dfrac{\partial}{\partial  v_1}\Big) =   w(v_1).  
\end{eqnarray*}
Suppose  in  addition that  $w$  is  non-vanishing.  
Then 
        $K_*(\Delta[V],w;R)$   is  an  exact  sequence   such  that   
\begin{eqnarray*}
{\rm  Ker} \delta_1(w)  = {\rm  Im}  \delta_2(w)  =  R\Big( w(v_0) \frac{\partial}{\partial  v_1}
- w(v_1)\frac{\partial}{\partial  v_0}\Big).  
\end{eqnarray*}
\item
By  Theorem~\ref{th-3.2ccv}~(2)  or  Corollary~\ref{co-3.6mkw},    the  chain  complex  $K^*(\Delta[V],w;R)$  is
 \begin{eqnarray*}
 \xymatrix{
 0\ar[r]  &   R(dv_0\wedge  dv_1)\ar[r]^-{\delta^2(w)}&   R(dv_0)\oplus  R(dv_1)
 \ar[r]^-{\delta^1(w)} &  w(v_0)   R  +  w(v_1)  R  \ar[r]^-{\delta^0(w)}&   0,  
 }
 \end{eqnarray*} 
 where  
\begin{eqnarray*}
&\delta^2(w) (d v_0 \wedge  d  v_1 ) =  w(v_0) d v_1 
- w(v_1)d v_0,  \\
&\delta^1(w) (d v_0 )  =   w(v_0),\\  
&\delta^1(w) ( d v_1 ) =   w(v_1).  
\end{eqnarray*}
Suppose  in  addition that  $w$  is  non-vanishing.   Then     $K^*(\Delta[V],w;R)$    is  an  exact  sequence   such  that   
\begin{eqnarray*}
{\rm  Ker} \delta^1(w)  = {\rm  Im}  \delta^2(w)  =  R ( w(v_0)   d v_1 
- w(v_1)  dv_0  ).  
\end{eqnarray*}
 \end{enumerate}
 \end{example}
 
 \begin{remark}
 Example~\ref{ex-vmqwe01}  is  a  discrete  analog of  \cite[Eq.  (3.56)]{siu}.  
 \end{remark}

  \begin{example}
  Let  $V=\{v_0,v_1,\ldots, v_n\}$,  $n\geq  2$.   Let  $w:  V\longrightarrow  R$.     Let  $1\leq   m\leq  n-1$.  Let  $U=\{v_{i_0}, v_{i_1},\ldots,v_{i_m}\}$,  where  $0\leq  i_0<i_1<\cdots<i_m\leq  n$,   be  a  fixed  subset  of   $V$.     
    Write  $V\setminus   U=\{v_{j_1}, \ldots,  v_{j_{n-m}}\}$.  
  \begin{enumerate}[(1).]
  \item
  Take the  simplicial  complex  
  \begin{eqnarray*}
  \mathcal{K}=\{\{v_i\}\mid  0\leq  i\leq  n\}\cup\{  \{v_i,v_j\}\mid    0\leq  i<j\leq  n\}\cup \{ \{v_i,v_j,v_k\}\mid   0\leq  i<j<k\leq  n \}.
  \end{eqnarray*} 
  By  Corollary~\ref{co-3.3mkw},    the  chain  complex  $K_*(\mathcal{K},w;R)$  is
 \begin{eqnarray*}
 &0\longrightarrow  R\Big(\dfrac{\partial}{\partial  v_i}\wedge\dfrac{\partial}{\partial  v_j}\wedge\dfrac{\partial}{\partial  v_k}\mid  0\leq  i<j<k\leq  n \Big)\overset{\delta_3(w)}{\longrightarrow}  R\Big(\dfrac{\partial}{\partial  v_i}\wedge\dfrac{\partial}{\partial  v_j} \mid  0\leq  i<j \leq  n \Big) \\
& \overset{\delta_2(w)}{\longrightarrow} R\Big(\dfrac{\partial}{\partial  v_i}\mid  0\leq  i\leq  n\Big) 
 \overset{\delta_1(w)}{\longrightarrow}   \sum_{i=0}^n  w(v_i)   R  \overset{\delta^0(w)}{\longrightarrow}   0.   
 \end{eqnarray*} 
 In  addition,  if  $w$   is   non-vanishing,  then    $K_*(\mathcal{K},w;R)$  is  an  exact  sequence such  that 
\begin{eqnarray*}
 {\rm  Ker} \delta_2(w)= {\rm    Im } \delta_3(w) &=& R\Big( w(v_i) \dfrac{\partial}{\partial  v_j}\wedge\dfrac{\partial}{\partial  v_k}- w(v_j) \dfrac{\partial}{\partial  v_i}\wedge\dfrac{\partial}{\partial  v_k} + w(v_k) \dfrac{\partial}{\partial  v_i}\wedge\dfrac{\partial}{\partial  v_j} \\
&&\mid   0\leq  i<j<k\leq  n \Big), \\
 {\rm  Ker} \delta_1(w)= {\rm    Im } \delta_2(w) &=& R\Big( w(v_i) \dfrac{\partial}{\partial  v_j}-w(v_j) \dfrac{\partial}{\partial  v_i}\mid  0\leq  i<j\leq  n\Big).  
\end{eqnarray*}
  \item
  Take  the  independence  hypergraph
   \begin{eqnarray*}
  \mathcal{L}&=&\{\{v_0,v_1,\ldots,v_n\} \}\cup\{  \{v_0,\ldots,\widehat{v_i},\ldots,v_n\}\mid    0\leq  i\leq  n\}\\
  &&\cup \{  \{v_0,\ldots,\widehat{v_i},\ldots,\widehat{v_j},\ldots, v_n\}\mid   0\leq  i<j \leq  n \}\\
  &&\cup \{  \{v_0,\ldots,\widehat{v_i},\ldots,\widehat{v_j},\ldots,\widehat{v_k}, \ldots, v_n\}\mid   0\leq  i<j<k \leq  n \}.
  \end{eqnarray*} 
    By   Corollary~\ref{co-3.6mkw},       the  chain  complex  $K^*(\mathcal{L},w;R)$  is  
  \begin{eqnarray*}
 &0\longrightarrow  R (d  v_i \wedge   d v_j \wedge   d  v_k  \mid  0\leq  i<j<k\leq  n  )\overset{\delta^3(w)}{\longrightarrow}  R ( d  v_i \wedge    d  v_j  \mid  0\leq  i<j \leq  n   ) \\
& \overset{\delta^2(w)}{\longrightarrow} R (  d v_i \mid  0\leq  i\leq  n  ) 
 \overset{\delta^1(w)}{\longrightarrow}   \sum_{i=0}^n  w(v_i)   R  \overset{\delta^0(w)}{\longrightarrow}   0.   
 \end{eqnarray*} 
  In  addition,  if  $w$   is   non-vanishing,  then    $K^*(\mathcal{L},w;R)$  is  an  exact  sequence such  that 
\begin{eqnarray*}
 {\rm  Ker} \delta^2(w)= {\rm    Im } \delta^3(w) &=& R ( w(v_i)   d v_j \wedge  d v_k - w(v_j) d  v_i \wedge
    d  v_k  + w(v_k) d v_i \wedge  d  v_j  \\
&&\mid   0\leq  i<j<k\leq  n  ), \\
 {\rm  Ker} \delta^1(w)= {\rm    Im } \delta^2(w) &=& R ( w(v_i)   d v_j -w(v_j) d  v_i \mid  0\leq  i<j\leq  n ).  
\end{eqnarray*}

  \item
   Take  the  $(m+1)$-uniform  hypergraph  
  \begin{eqnarray*}
  \mathcal{H}  = \{\{v_{i_0}, v_{i_1},\ldots,v_{i_m}\}\mid  0\leq  i_0<i_1<\cdots<i_m\leq  n \}.  
  \end{eqnarray*}
  Then  $V_{\mathcal{H} }=V^{\mathcal{H} }=\emptyset$.   The  chain  complexes      $K_*(\mathcal{H},w;R)$  and  
  $K^*(\mathcal{H},w;R)$    are  trivial.  
  \item
  Take  the  hypergraph 
  \begin{eqnarray*}
  \mathcal{H} = \{\sigma\in \Delta[V]\mid  \sigma\cap  U\neq  \emptyset\}.  
    \end{eqnarray*}
    We  have  $V_\mathcal{H}=V\setminus  U$  and     $V^\mathcal{H}=V$.      
     By  Theorem~\ref{th-3.2ccv}~(1),    the  chain  complex  $K_*(\mathcal{H},w;R)$  is 
  \begin{eqnarray*}
   &0\longrightarrow  R\Big(\dfrac{\partial}{\partial  v_{j_1}}\wedge\cdots\wedge\dfrac{\partial}{\partial  v_{j_{n-m}}}\Big)
   \\
   &\overset{\delta_{n-m}(w)}{\longrightarrow} R\Big(\dfrac{\partial}{\partial  v_{j_1}}\wedge\cdots\wedge\widehat{\dfrac{\partial}{\partial  v_{j_l}}}\wedge\cdots\wedge \dfrac{\partial}{\partial  v_{j_{n-m}}}\mid  1\leq  l\leq  n-m\Big)\overset{\delta_{n-m-1 }(w)}{\longrightarrow} \cdots  \\
    &      \overset{\delta_{2}(w)}{\longrightarrow} 
   R\Big(\dfrac{\partial}{\partial  v_{j_l}}\mid  1\leq  l\leq  n-m\Big) \overset{\delta_{1}(w)}{\longrightarrow} \sum_{l=1}^{n-m}  w(v_{j_l})  R  \overset{\delta_{0}(w)}{\longrightarrow}  0.  
  \end{eqnarray*} 
  If  in  addition  that $w$  is  non-vanishing  on  $V\setminus   U$,  then   $K_*(\mathcal{H},w;R)$  is  a  long  exact  sequence.   
On the other  hand,  by  Theorem~\ref{th-3.2ccv}~(2),    the  chain  complex  $K^*(\mathcal{H},w;R)$  is
  \begin{eqnarray*}
   &0\longrightarrow  R (d  v_{ 0} \wedge\cdots\wedge   d v_{n}  )
   \overset{\delta^{n+1}(w)}{\longrightarrow} R (  d v_{ 0} \wedge\cdots\wedge\widehat{   dv_{i }} \wedge\cdots\wedge  d v_{n} \mid  0\leq  i\leq  n )\\
    &   \overset{\delta^{n}(w)}{\longrightarrow}  \cdots    \overset{\delta^{2}(w)}{\longrightarrow} 
   R (d  v_{i } \mid  0\leq   i\leq   n ) \overset{\delta^{1}(w)}{\longrightarrow} \sum_{i=0}^n   w(v_{i })  R  \overset{\delta^{0}(w)}{\longrightarrow}  0.  
  \end{eqnarray*} 
 If in  addition that  $w$  is  non-vanishing  on  $V$,  then    $K^*(\mathcal{H},w;R)$  is  a  long  exact  sequence.

 \item 
 Take  the  hypergraph 
  \begin{eqnarray*}
  \mathcal{H}  = \{\sigma\in \Delta[V]\mid  \sigma\cap U=\emptyset\}.  
    \end{eqnarray*}
    We  have     $V_\mathcal{H}=V$.  
    By  Theorem~\ref{th-3.2ccv}~(1),    the  chain  complex  $K_*(\mathcal{H},w;R)$  is 
  \begin{eqnarray*}
   &0\longrightarrow  R\Big(\dfrac{\partial}{ \partial   v_{ 0}} \wedge\cdots\wedge  \dfrac{\partial}{  \partial   v_{n}} \Big)
   \overset{\delta_{n+1}(w)}{\longrightarrow} R\Big(  \dfrac{\partial}{ \partial  v_{ 0}} \wedge\cdots\wedge\widehat{   \dfrac{\partial}{\partial  v_{i }}} \wedge\cdots\wedge  \dfrac{\partial}{\partial  v_{n}} \mid  0\leq  i\leq  n\Big)\\
    &   \overset{\delta_{n}(w)}{\longrightarrow}  \cdots    \overset{\delta_{2}(w)}{\longrightarrow} 
   R\Big(\dfrac{\partial}{  \partial   v_{i } }\mid  0\leq   i\leq   n\Big) \overset{\delta_{1}(w)}{\longrightarrow} \sum_{i=0}^n   w(v_{i })  R  \overset{\delta_{0}(w)}{\longrightarrow}  0.  
  \end{eqnarray*} 
 If  in addition  that  $w$  is  non-vanishing  on  $V$,  then       $K_*(\mathcal{H},w;R)$  is  a  long  exact  sequence. 
 On the other  hand,  by  Theorem~\ref{th-3.2ccv}~(2),    the  chain  complex  $K^*(\mathcal{H},w;R)$  is 
  \begin{eqnarray*}
   &0\longrightarrow  R (d  v_{j_1} \wedge\cdots\wedge   d  v_{j_{n-m}}  )
   \overset{\delta^{n-m}(w)}{\longrightarrow} R ( d  v_{j_1} \wedge\cdots\wedge\widehat{  d  v_{j_l}} \wedge\cdots\wedge    d  v_{j_{n-m}} \mid  1\leq  l\leq  n-m )\\
    &   \overset{\delta^{n-m-1 }(w)}{\longrightarrow}  \cdots    \overset{\delta^{2}(w)}{\longrightarrow} 
   R (  d  v_{j_l} \mid  1\leq  l\leq  n-m ) \overset{\delta^{1}(w)}{\longrightarrow} \sum_{l=1}^{n-m}  w(v_{j_l})  R  \overset{\delta_{0}(w)}{\longrightarrow}  0.  
  \end{eqnarray*} 
   If  in  addition  that $w$  is  non-vanishing  on  $V\setminus   U$,  then    $K^*(\mathcal{H},w;R)$  is  a  long  exact  sequence.   
  \end{enumerate} 
 \end{example}
 
 \begin{example}
 Let  $V=\mathbb{Z}$.   Let  $w:  \mathbb{Z}\longrightarrow  R$.  
 \begin{enumerate}[(1).]
 \item
 Take  the  simplicial  complex
 \begin{eqnarray*}
 \mathcal{K}= \{\{v_n\}\mid  n\in \mathbb{Z}\}\cup    \{\{v_n,v_{n+1}\}\mid  n\in \mathbb{Z}\}\cup  \{\{v_n,v_{n+1},v_{n+2}\}\mid  n\in \mathbb{Z}\}.  
  \end{eqnarray*}
 By  Corollary~\ref{co-3.3mkw},    the  chain  complex  $K_*(\mathcal{K},w;R)$  is 
 \begin{eqnarray*}
 &0\longrightarrow  R\Big(\dfrac{\partial}{\partial  v_i}\wedge\dfrac{\partial}{\partial  v_j}\wedge\dfrac{\partial}{\partial  v_k}\mid    i<j<k,  i,j,k\in \mathbb{Z}  \Big)\overset{\delta_3(w)}{\longrightarrow}  R\Big(\dfrac{\partial}{\partial  v_i}\wedge\dfrac{\partial}{\partial  v_j} \mid     i<j,   i,j\in\mathbb{Z} \Big) \\
& \overset{\delta_2(w)}{\longrightarrow} R\Big(\dfrac{\partial}{\partial  v_i}\mid    i\in \mathbb{Z} \Big) 
 \overset{\delta_1(w)}{\longrightarrow}   \Big\{\sum_{i\in  \sigma}   w(v_i)   R \mid   \sigma\subset  \mathbb{Z}{\rm~are~finite~sets} \Big\}\overset{\delta_0(w)}{\longrightarrow}   0.   
 \end{eqnarray*}

 \item
 Recall that  a  hyperedge  with  vertices from $\mathbb{Z}$  is  just a non-empty  finite  subset of  $\mathbb{Z}$. 
Let  $\sigma$  be  a  hyperedge  with  vertices from  $\mathbb{Z}$.  Take  the  independence  hypergraph  
  \begin{eqnarray*}
 \mathcal{L}= \{ \tau\supseteq \sigma \mid   \tau{\rm~is~a~hyperedge~with~vertices~from~}\mathbb{Z}\}.  
  \end{eqnarray*}
 By  Corollary~\ref{co-3.6mkw},   the  chain  complex  $K^*(\mathcal{L},w;R)$  is
 \begin{eqnarray*}
&\cdots \overset{\delta^{n+1}(w)}{\longrightarrow} R (d v_{i_1} \wedge   \cdots \wedge   d  v_{i_n} \mid    i_1<\cdots <i_n,  i_1,\cdots,i_n\in \mathbb{Z},  v_{i_1},\cdots,v_{i_n}\notin\sigma  )\overset{\delta^n(w)}{\longrightarrow}\\
&R (d v_{i_1} \wedge   \cdots \wedge   d  v_{i_{n-1}} \mid    i_1<\cdots <i_{n-1},  i_1,\cdots,i_{n-1}\in \mathbb{Z},  v_{i_1},\cdots,v_{i_{n-1}}\notin\sigma  )\overset{\delta^{n-1}(w)}{\longrightarrow}\\
&\cdots 
 \overset{\delta^2(w)}{\longrightarrow} R (d v_i \mid    i\in \mathbb{Z},  v_i\notin  \sigma  ) 
 \overset{\delta^1(w)}{\longrightarrow}   \Big\{\sum_{i\in  \eta}   w(v_i)   R \mid   \eta\subset  \mathbb{Z}\setminus \sigma {\rm~are~finite~sets} \Big\}\overset{\delta^0(w)}{\longrightarrow}   0.
 \end{eqnarray*}
 \end{enumerate}
 \end{example}
 
 \begin{example}
 Let  $V=\{v_0,v_1,v_2\}$.  Let   $w:  V\longrightarrow   R$.    Let  $S_3$  be  the  permutation  group  on  $\{0,1,2\}$.  
 Let  $   s \in   S_3$. 
 \begin{enumerate}[(1).]
 \item
 Take the  simplicial complexes  
 \begin{eqnarray*}
 \mathcal{K}&=&\{\{v_0\},  \{v_1\},    \{v_0,v_1\}\},\\
 \mathcal{K}'&=&\{\{v_0\},  \{v_1\},  \{v_2\},  \{v_0,v_1\},  \{v_1,v_2\}, \{v_0,v_2\}\}
 \end{eqnarray*}
 and  the  simplicial  map  $s:  \mathcal{K}\longrightarrow  \mathcal{K}'$  induced  by  the  permutation  $s$ 
  on  $V$,  where $s\in  S_3$  is  arbitrarily  chosen.   The   chain  complex    $K_*(\mathcal{K},w;R)$  
  is 
  \begin{eqnarray*}
  \xymatrix{
 R\Big(\dfrac{\partial}{\partial  v_0}\wedge \dfrac{\partial}{\partial  v_1} ) 
  \ar[r]^-{\delta_2(w)}  &  R\Big(\dfrac{\partial}{\partial  v_i}\mid  i=0,1 \Big) 
   \ar[r]^-{\delta_1(w)}   &     w(v_0)   R + w(v_1)  R \ar[r]^-{\delta_0(w)}    &0, 
  }
  \end{eqnarray*}
  the  chain  complex   $K_*(\mathcal{K}',w;R)$  is  
  \begin{eqnarray*}
  \xymatrix{
   R\Big(\dfrac{\partial}{\partial  v_0}\wedge \dfrac{\partial}{\partial  v_1}\wedge  \dfrac{\partial}{\partial  v_2} \Big)
  \ar[r]^-{\delta_3(w)}  & R\Big(\dfrac{\partial}{\partial  v_i}\wedge \dfrac{\partial}{\partial  v_j} \mid  0\leq  i<j\leq  2\Big) 
  \ar[r]^-{\delta_2(w)}  &  R\Big(\dfrac{\partial}{\partial  v_i}\mid  i=0,1,2  \Big)\\
   \ar[r]^-{\delta_1(w)}   &  \sum_{i=0}^2  w(v_i)   R \ar[r]^-{\delta_0(w)}    &0  
  }
  \end{eqnarray*}
  and  the    induced    chain  map     in  (\ref{eq-abcdz1})   is     given  by
  \begin{eqnarray*}
  {\rm  Ext}_*(s)(\frac{\partial}{\partial  v_i}) =\frac{\partial}{\partial   v_{s(i)}},~~~  i=0,1.   
  \end{eqnarray*}
  
 \item
 Take  the  independence  hypergraphs
 \begin{eqnarray*}
 \mathcal{L} &=& \{\{v_0,v_1,v_2\},  \{v_0,v_1\}\}, \\
 \mathcal{L}'&=& \{\{v_0,v_1,v_2\},  \{v_0,v_1\},  \{v_0\}, \{v_1\},  \{v_1,v_2\}\}   
 \end{eqnarray*}
 and  the  morphism  $s:  \mathcal{L}\longrightarrow \mathcal{L}'$  where  
 \begin{eqnarray*}
 s= \begin{pmatrix}
    0 &1 & 2 \\
    1 &2 & 0
  \end{pmatrix},  
  \begin{pmatrix}
    0 &1 & 2 \\
    2 &1 & 0
  \end{pmatrix}, 
    \begin{pmatrix}
    0 &1 & 2 \\
    1 &0 & 2
  \end{pmatrix}  
  {\rm ~or~} 
\begin{pmatrix}
    0 &1 & 2 \\
    0 &1 & 2
  \end{pmatrix}.   
 \end{eqnarray*}
 The   chain  complex    $K^*(\mathcal{L},w;R)$  as  well  as   $K^*(\mathcal{L}',w;R)$  is  
  \begin{eqnarray*}
  \xymatrix{
  &R (d  v_0 \wedge  d v_1 \wedge  d  v_2   )
  \ar[r]^-{\delta^3(w)}  & R (d  v_i \wedge   d  v_j  \mid  0\leq  i<j\leq  2  ) 
  \ar[r]^-{\delta^2(w)}  &  R  (d v_i \mid  i=0,1,2   )\\
  &\ar[r]^-{\delta^1(w)}   &  \sum_{i=0}^2  w(v_i)   R \ar[r]^-{\delta^0(w)}    &0  
  }
  \end{eqnarray*}
  and  the    induced    chain  map     in  (\ref{eq-abcdz2})   is     given  by
  \begin{eqnarray*}
  {\rm  Ext}^*(s)(d v_i ) =d v_{s(i)},~~~  i=0,1,2.  
  \end{eqnarray*}
 \end{enumerate}
 \end{example}
 
 \section{Constrained   (co)Homology  for  Hypergraphs}\label{s4}
 
  Let  $t\in  \mathbb{N}$.  
  Let  $0\leq q\leq 2t$  be  an  integer.    Let  $m\in \mathbb{Z}$.  There exist   a unique  $\lambda\in \mathbb{Z}$   and a unique  integer  $0\leq q\leq 2t$  such that $m=\lambda(2t+1)+q$.    Let   $\alpha\in {\rm Ext}_{2t+1}(V)$.  
  Let  $\omega\in {\rm Ext}^{2t+1}(V)$.    Let  $s\in  \mathbb{N}$.   Let $\beta\in {\rm Ext}_{2s}(V)$. 
   Let $\mu\in {\rm Ext}^{2s}(V)$.  
    Let  $s_1,s_2\in \mathbb{N}$.  Let $\beta_1\in    {\rm Ext}_{2s_1}(V) $   and   $\beta_2\in    {\rm Ext}_{2s_2}(V) $.  
     Let $\mu_1\in    {\rm Ext}^{2s_1}(V) $   and   $\mu_2\in    {\rm Ext}^{2s_2}(V) $. 
   
  \subsection{Constrained  homology  of  simplicial  complexes  and costrained  cohomology  of  independence  hypergraphs}

We  review the  constrained  homology  of  simplicial  complexes,  the  costrained  cohomology  of  independence  hypergraphs,  their  functorialities    and  the    Mayer-Vietoris  sequences  (cf.  \cite{camb2023,mv}).   
   Let   $\mathcal{K}$  be   a   simplicial  complex   with  its  vertices  from   $V$.       We  have a chain complex
\begin{eqnarray}\label{eq-vb345}
&\cdots\overset{\alpha}{\longrightarrow}
 R_{(n+\lambda)(2t+1)+q}(\mathcal{K})\overset{\alpha}{\longrightarrow}
 R_{(n-1+\lambda)(2t+1)+q}(\mathcal{K})\overset{\alpha}{\longrightarrow}
  \nonumber\\
&\cdots\overset{\alpha}{\longrightarrow}
 R_{(1+\lambda)(2t+1)+q}(\mathcal{K})\overset{\alpha}{\longrightarrow}
 R_{\lambda(2t+1)+q}(\mathcal{K})\overset{\alpha}{\longrightarrow}
 0,
\end{eqnarray}
denoted  by  $R_*(\mathcal{K},\alpha,m)$.   Let   $\mathcal{L}$  be  an   independence  hypergraph   with  its  vertices  from  $V$.   We  have  a  co-chain  complex   
\begin{eqnarray}\label{eq-vb368}
&\cdots \overset{\omega}{\longleftarrow}
 R_{(n+\lambda)(2t+1)+q}(\mathcal{L})\overset{\omega}{\longleftarrow}
 R_{(n-1+\lambda)(2t+1)+q}(\mathcal{L})\overset{\omega}{\longleftarrow}
 \nonumber  \\
&\cdots  \overset{\omega}{\longleftarrow} R_{(1+\lambda)(2t+1)+q}(\mathcal{L})\overset{\omega}{\longleftarrow}
 R_{\lambda(2t+1)q}(\mathcal{L})\overset{\omega}{\longleftarrow}
  0, 
\end{eqnarray}
denoted  by $R^*(\mathcal{L},\omega,m)$.    
The  $n$-th  {\it  constrained homology group} $H_n(\mathcal{K},\alpha,m)$ of $\mathcal{K}$  with respect to $\alpha$ and $m$   is  defined  to be the  $n$-th   homology group  of the chain complex $R_*(\mathcal{K},\alpha, m)$  and   the  $n$-th  {\it  constrained  cohomology  group} $H^n(\mathcal{L},\omega,m)$ of $\mathcal{L}$  with respect to $\omega$ and $m$   is  defined  to be the  $n$-th   cohomology group  of the co-chain complex $R^*(\mathcal{L},\omega, m)$    (cf.  \cite[Definition~4.3  and  Definition~4.4]{camb2023}).    
   We  have   an  induced   chain map
\begin{eqnarray*}
\beta:~~~  R_*(\mathcal{K},\alpha,m)\longrightarrow  R_*(\mathcal{K},\alpha,m-2s) 
\end{eqnarray*}
and  consequently  an  induced     homomorphism  of  the  constrained  homology groups
\begin{eqnarray*} 
\beta_*: ~~~H_n(\mathcal{K},\alpha,m)\longrightarrow  H_n(\mathcal{K},\alpha,m-2s), ~~~~~~  n\in  \mathbb{N}.  
\end{eqnarray*}
 We  have   an  induced   co-chain  map
\begin{eqnarray*}
\mu:~~~  R^*(\mathcal{L},\omega,m)\longrightarrow  R^*(\mathcal{L},\omega,m+2s) 
\end{eqnarray*}
 and  consequently  an  induced     homomorphism  of  the  constrained  cohomology groups
\begin{eqnarray*} 
\mu_*: ~~~H^n(\mathcal{L},\omega,m)\longrightarrow  H^n(\mathcal{L},\omega,m+2s), ~~~~~~  n\in  \mathbb{N}.  
\end{eqnarray*}
 The  commutative  diagram    of  chain  complexes   
 \begin{eqnarray*}
 \xymatrix{
 R_*(\mathcal{K},\alpha,m)\ar[r]^-{\beta_1} \ar[dd]_-{\beta_2} \ar[rdd] ^-{\beta_1\wedge \beta_2}&     R_*(\mathcal{K},\alpha,m-2s_1)\ar[dd]^-{\beta_2}\\
 \\
R_*(\mathcal{K},\alpha,m-2s_2)  \ar[r]^-{\beta_1} &     R_*(\mathcal{K},\alpha,m-2(s_1+s_2)) 
 }
 \end{eqnarray*}
induces  a  commutative diagram  of  constrained homology  groups
 \begin{eqnarray}\label{eq-diag-k}
 \xymatrix{
H_*(\mathcal{K},\alpha,m)\ar[r]^-{(\beta_1)_*} \ar[dd]_-{(\beta_2)_*}\ar[rdd] ^-{(\beta_1\wedge \beta_2)_*}&     H_*(\mathcal{K},\alpha,m-2s_1)\ar[dd]^-{(\beta_2)_*}\\
\\
H_*(\mathcal{K},\alpha,m-2s_2) \ar[r]^-{(\beta_1)_*}    &     H_*(\mathcal{K},\alpha,m-2(s_1+s_2)).  
 }
 \end{eqnarray}
 The  commutative  diagram    of  co-chain  complexes   
 \begin{eqnarray*}
 \xymatrix{
 R^*(\mathcal{L},\omega,m)\ar[r]^-{\mu_1}\ar[dd]_-{\mu_2} \ar[rdd] ^-{\mu_1\wedge \mu_2}&     R^*(\mathcal{L},\omega,m+2s_1)\ar[dd]^-{\mu_2}\\
 \\
 R^*(\mathcal{L},\omega,m+2s_2) \ar[r]^-{\mu_1}&     R^*(\mathcal{L},\omega,m+2(s_1+s_2)) 
 }
 \end{eqnarray*}
induces  a  commutative diagram  of  constrained  cohomology  groups
 \begin{eqnarray}\label{eq-diag-l}
 \xymatrix{
 H^*(\mathcal{L},\omega,m)\ar[r]^-{(\mu_1)_*}\ar[dd]_-{(\mu_2)_*} \ar[rdd] ^-{(\mu_1\wedge \mu_2)_*}&     H^*(\mathcal{L},\omega,m+2s_1)\ar[dd]^-{(\mu_2)_*}\\
 \\
 H^*(\mathcal{L},\omega,m+2s_2) \ar[r]^-{(\mu_1)_*}&    H^*(\mathcal{L},\omega,m+2(s_1+s_2)).  
 }
 \end{eqnarray}
 Let  $\mathcal{K}_1$  and  $\mathcal{K}_2$   be  
simplicial  complexes     and  $\mathcal{L}_1$  and  $\mathcal{L}_2$   be  
 independence  hypergraphs   with  their  vertices from  $V$.
 We  have  
 a  long  exact  sequence  of  the  constrained  homology  groups
 \begin{eqnarray}
\cdots \longrightarrow      H_n(\mathcal{K}_1 \cap  \mathcal{K}_2,\alpha, m)\longrightarrow 
  H_n (\mathcal{K}_1,\alpha, m) \oplus   H_n( \mathcal{K}_2,\alpha, m)\longrightarrow  \nonumber\\
  \longrightarrow  H_n(\mathcal{K}_1 \cup  \mathcal{K}_2,\alpha, m)\longrightarrow H_{n-1}(\mathcal{K}_1 \cap  \mathcal{K}_2,\alpha, m)\longrightarrow \cdots 
   \label{eq-lex1a}
 \end{eqnarray} 
 and  
 a  long  exact  sequence  of  the  constrained  cohomology  groups
 \begin{eqnarray}
\cdots \longrightarrow      H^n(\mathcal{L}_1 \cap  \mathcal{L}_2,\omega, m)\longrightarrow 
  H^n (\mathcal{L}_1,\omega, m) \oplus   H^n( \mathcal{L}_2,\omega, m)\longrightarrow  \nonumber \\
 \longrightarrow   H^n(\mathcal{L}_1 \cup  \mathcal{L}_2,\omega, m)\longrightarrow H^{n+1}(\mathcal{L}_1 \cap  \mathcal{L}_2,\omega, m)\longrightarrow \cdots.  
   \label{eq-lex2b}
 \end{eqnarray} 
 We  call  (\ref{eq-lex1a})  the  {\it  Mayer-Vietoris  sequence}  of  constrained  homology  of  simplicial complexes with respect to  $\alpha$  and  $m$  and  denote  it  as  ${\bf  MV}_*(\mathcal{K}_1,\mathcal{K}_2, \alpha, m)$.   We  call  (\ref{eq-lex2b})  the  {\it  Mayer-Vietoris  sequence}  of  constrained  cohomology  of   independence  hypergraphs   with respect to  $\omega$  and  $m$  and  denote  it  as  ${\bf  MV}^*(\mathcal{L}_1,\mathcal{L}_2, \omega, m)$.    
 The  next   lemma   is  a  slight  generalization   of  \cite[Proposition~3.4]{mv}  with   coefficients in  $R$.   
 
 \begin{proposition}\label{pr-5.a1}
 For  any     
simplicial  complexes  $\mathcal{K}_1$  and  $\mathcal{K}_2$   and  any    
 independence  hypergraphs   $\mathcal{L}_1$  and  $\mathcal{L}_2$      with  their  vertices from  $V$,
  \begin{enumerate}[(1).]
 \item
 the   commutative   diagram        (\ref{eq-diag-k})   and   the   long  exact   sequence  (\ref{eq-lex1a})       are  functorial   with respect  
to  simplicial  maps  induced  by  bijective  maps  between  the  vertices;   
 \item
     the   commutative  diagram        (\ref{eq-diag-l})  and   the   long  exact    sequence  (\ref{eq-lex2b})        are  functorial   with respect  
to  morphisms  of  independence  hypergraphs   induced  by  bijective  maps  between  the  vertices.  
\end{enumerate}
 \end{proposition}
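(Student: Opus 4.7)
The plan is to lift any bijection $\varphi:V\to V$ inducing the prescribed simplicial maps (resp.\ morphisms of independence hypergraphs) to a chain-level natural transformation of the short exact sequences of (co-)chain complexes that produce (\ref{eq-lex1a}) and (\ref{eq-lex2b}), and then extract the functoriality of both the diagrams (\ref{eq-diag-k})--(\ref{eq-diag-l}) and the Mayer-Vietoris sequences from the naturality of the zig-zag lemma.

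First I would observe that a bijection $\varphi:V\to V$ which restricts to simplicial maps $\mathcal{K}_i\to\mathcal{K}_i'$ (resp.\ morphisms of independence hypergraphs $\mathcal{L}_i\to\mathcal{L}_i'$) for $i=1,2$ automatically restricts to morphisms on the intersections and on the unions, since a bijective image of a hyperedge is compatible with arbitrary unions and intersections of collections of subsets of $V$. Thus we obtain the required morphisms at the hypergraph level in all four positions of each square. Next I would lift $\varphi$ to degreewise $R$-linear maps $\varphi_*:R_n(\mathcal{K})\to R_n(\varphi(\mathcal{K}))$ and, following the argument of Proposition~\ref{pr-068a}, verify that these chain-level maps intertwine $\alpha\in{\rm Ext}_{2t+1}(V)$ and $\omega\in{\rm Ext}^{2t+1}(V)$ with their images ${\rm Ext}_*(\varphi)(\alpha)$ and ${\rm Ext}^*(\varphi)(\omega)$ respectively, and similarly for each of $\beta,\beta_1,\beta_2$ and $\mu,\mu_1,\mu_2$. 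The key identity is
\begin{equation*}
\varphi_*\circ\tfrac{\partial}{\partial v}=\tfrac{\partial}{\partial\varphi(v)}\circ\varphi_*,\qquad \varphi_*\circ dv=d\varphi(v)\circ\varphi_*,
\end{equation*}
which upgrades $\varphi_*$ to (co-)chain maps between (\ref{eq-vb345}) and (\ref{eq-vb368}) for each of the four hypergraphs involved.

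The third step is to assemble these (co-)chain maps into commutative ladders between the short exact sequences
\begin{equation*}
0\to R_*(\mathcal{K}_1\cap\mathcal{K}_2,\alpha,m)\to R_*(\mathcal{K}_1,\alpha,m)\oplus R_*(\mathcal{K}_2,\alpha,m)\to R_*(\mathcal{K}_1\cup\mathcal{K}_2,\alpha,m)\to 0
\end{equation*}
and their images under $\varphi$, and analogously for the co-chain short exact sequence in the independence-hypergraph case. The naturality of the long exact sequence associated to a short exact sequence of (co-)chain complexes immediately yields the functoriality of (\ref{eq-lex1a}) and (\ref{eq-lex2b}). The functoriality of the squares (\ref{eq-diag-k}) and (\ref{eq-diag-l}) then follows because the arrows in those squares are induced by $\beta_i$ or $\mu_i$, which already commute with $\varphi_*$ at the chain level, and passage to (co-)homology is itself functorial.

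The main obstacle I anticipate is the chain-level compatibility of $\varphi_*$ with the operator $dv$ in the independence-hypergraph case, because $dv$ is defined via post-composition with the canonical projection $\pi:\Lambda_*(V)\to\tilde\Lambda_*(V)$ killing non-simplicial paths (see (\ref{eq-nnv2})). One must verify that $\varphi_*$ preserves the submodule $\mathcal{O}_*(V)$, i.e.\ carries non-simplicial elementary paths to combinations which, after applying $\pi$, reassemble with the relabeled signs. Bijectivity of $\varphi$ preserves vertex repetition, and the ordering $\prec$ is tracked by the signs inside ${\rm Ext}^*(\varphi)$; together these give $\pi\circ\varphi_*=\varphi_*\circ\pi$, and the remainder of the argument then specializes routinely from \cite[Proposition~3.4]{mv} to arbitrary coefficient rings $R$ satisfying our standing hypotheses.
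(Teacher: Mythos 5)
Your proposal is correct and follows essentially the same route as the paper: restrict the bijection to the intersections and unions, lift it to (co-)chain maps intertwining $\alpha$ (resp.\ $\omega$) with ${\rm Ext}_*(\varphi)(\alpha)$ (resp.\ ${\rm Ext}^*(\varphi)(\omega)$), and then pass to (co-)homology, using the chain-level commutation with $\beta_i$ and $\mu_i$ for the squares. The only difference is cosmetic: you make explicit the naturality of the connecting homomorphism for the short exact sequence of (co-)chain complexes, which the paper leaves implicit in the phrase ``applying the homology functor.''
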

 
 \begin{proof}
 (1).  
 Let  $\mathcal{K}_1$  and  $\mathcal{K}_2$  be  simplicial  complexes  with  vertices  from  $V$  
 and  let  $\mathcal{K}'_1$  and  $\mathcal{K}'_2$  be  simplicial  complexes  with  vertices  from  $V'$.   
 Suppose   $\varphi_1:  \mathcal{K}_1\longrightarrow  \mathcal{K}'_1$  and  $\varphi_2:  \mathcal{K}_2\longrightarrow  \mathcal{K}'_2$   are    simplicial  maps  induced  by  the   same  map     $\varphi:  V\longrightarrow  V'$   between  the  vertices.   
 Then  we  have  induced  simplicial maps 
 \begin{eqnarray*}
 \varphi_1\cap\varphi_2: &&  \mathcal{K}_1\cap  \mathcal{K}_2 \longrightarrow  \mathcal{K}'_1\cap  \mathcal{K}'_2,\\
  \varphi_1\cup\varphi_2: && \mathcal{K}_1\cup  \mathcal{K}_2 \longrightarrow  \mathcal{K}'_1\cup  \mathcal{K}'_2
 \end{eqnarray*}
 which  are  also  induced  by  $\varphi$.   Suppose  in addition  that  $\varphi$  is  bijective.  
 Then  
 \begin{eqnarray*}
 {\rm  Ext}_*(\varphi)(\alpha)\in  {\rm  Ext}_{2t+1}(V').
 \end{eqnarray*}      
  Consequently,    we  have      chain  maps    
 \begin{eqnarray}\label{eq-mblqo1}
 \varphi_{\#}: ~~~ R_*(\mathcal{K}_i,\alpha,m)\longrightarrow   R_*(\mathcal{K}'_i,{\rm  Ext}_*(\varphi)(\alpha),m)
 \end{eqnarray}
 for  $i=1,2$  as  well  as   
   chain  maps  
  \begin{eqnarray}\label{eq-mblqo2}
 \varphi_{\#}: && R_*(\mathcal{K}_1\cap\mathcal{K}_2,\alpha,m)\longrightarrow   R_*(\mathcal{K}'_1\cap \mathcal{K}'_2,{\rm  Ext}_*(\varphi)(\alpha),m),\\
 \label{eq-mblqo3}
  \varphi_{\#}: && R_*(\mathcal{K}_1\cup\mathcal{K}_2,\alpha,m)\longrightarrow   R_*(\mathcal{K}'_1\cup \mathcal{K}'_2,{\rm  Ext}_*(\varphi)(\alpha),m).  
 \end{eqnarray}
Applying   the  homology  functor  to the  chain  complexes  and  the  chain  maps  in 
(\ref{eq-mblqo1}),  (\ref{eq-mblqo2})  and  (\ref{eq-mblqo3}),   we  have  that   the   long  exact   sequence  (\ref{eq-lex1a})      is   functorial   with respect  
to  $\varphi$.   
 Moreover,   the  diagram    
 \begin{eqnarray}\label{diag-mgoan1}
 \xymatrix{
 R_*(\mathcal{K}_i,\alpha,m)\ar[rr]^-{\beta_*} \ar[d] &&  R_*(\mathcal{K}_i,\alpha,m-2s)  \ar[d] 
 \\
  R_*(\mathcal{K}'_i,{\rm  Ext}_*(\varphi)(\alpha),m)\ar[rr]^-{ ({\rm  Ext}_*(\varphi)(\beta) )_*} &&  R_*(\mathcal{K}'_i,{\rm  Ext}_*(\varphi)(\alpha),m-2s)   
 }
 \end{eqnarray}
    commutes  for  $i=1,2$.   Letting  $\beta$  be  $\beta_1$,  $\beta_2$  and  $\beta_1\wedge \beta_2$  respectively  and   applying   the  homology  functor  to the  chain  complexes  and  the  chain  maps  in  (\ref{diag-mgoan1}),    
 we  have  that  the   commutative  diagram        (\ref{eq-diag-k}) is   functorial   with respect  
to    $\varphi$.

(2).   Let  $\mathcal{L}_1$  and  $\mathcal{L}_2$  be  independence  hypergraphs  with  vertices  from  $V$  
 and  let  $\mathcal{L}'_1$  and  $\mathcal{L}'_2$  be  independence  hypergraphs  with  vertices  from  $V'$.   
 Suppose   $\varphi_1:  \mathcal{L}_1\longrightarrow  \mathcal{L}'_1$  and  $\varphi_2:  \mathcal{L}_2\longrightarrow  \mathcal{L}'_2$   are    morphisms  of  independence  hypergraphs   induced  by  the   same  bijective  map     $\varphi:  V\longrightarrow  V'$   between  the  vertices.   
  Then  we  have  induced  morphisms  of  independence  hypergraphs  
 \begin{eqnarray*}
 \varphi_1\cap\varphi_2: &&  \mathcal{L}_1\cap  \mathcal{L}_2 \longrightarrow  \mathcal{L}'_1\cap  \mathcal{L}'_2,\\
  \varphi_1\cup\varphi_2: && \mathcal{L}_1\cup  \mathcal{L}_2 \longrightarrow  \mathcal{L}'_1\cup  \mathcal{L}'_2
 \end{eqnarray*}
 which  are  also  induced  by  $\varphi$.    
 We  have    co-chain  maps    
 \begin{eqnarray}\label{eq-vmwf1}
 \varphi_{\#}: ~~~ R^*(\mathcal{L}_i,\omega,m)\longrightarrow   R^*(\mathcal{L}'_i,{\rm  Ext}^*(\varphi)(\omega),m)
 \end{eqnarray}
 for  $i=1,2$  as  well  as   
    co-chain  maps  
  \begin{eqnarray}\label{eq-vmwf2}
 \varphi_{\#}: &&  R^*(\mathcal{L}^1\cap\mathcal{L}^2,\alpha,m)\longrightarrow   R_*(\mathcal{L}'^1\cap \mathcal{L}'^2,{\rm  Ext}^*(\varphi)(\omega),m),\\
 \label{eq-vmwf3}
  \varphi_{\#}: &&  R^*(\mathcal{L}^1\cup\mathcal{L}^2,\alpha,m)\longrightarrow   R_*(\mathcal{L}'^1\cup \mathcal{L}'^2,{\rm  Ext}^*(\varphi)(\omega),m).  
 \end{eqnarray}
Applying   the  cohomology  functor  to the  co-chain  complexes  and  the co-chain  maps  in  (\ref{eq-vmwf1}),   (\ref{eq-vmwf2})   and  (\ref{eq-vmwf3}),     we  have  that  the   long  exact   sequence  (\ref{eq-lex2b})      is   functorial   with respect  
to  $\varphi$.   
 Moreover,   the  diagram    
 \begin{eqnarray}\label{oadnfmao}
 \xymatrix{
 R^*(\mathcal{L}_i,\omega,m)\ar[rr]^-{\mu_*} \ar[d] &&  R^*(\mathcal{L}_i,\omega,m+2s)  \ar[d] 
 \\
  R^*(\mathcal{L}'_i,{\rm  Ext}^*(\varphi)(\omega),m)\ar[rr]^-{ ({\rm  Ext}^*(\varphi)(\mu) )_*} &&  R_*(\mathcal{L}'_i,{\rm  Ext}_*(\varphi)(\omega),m+2s)   
 }
 \end{eqnarray}
 commutes   for  $i=1,2$.   Letting  $\mu$  be  $\mu_1$,  $\mu_2$  and  $\mu_1\wedge \mu_2$  respectively
  and  applying  the  cohomology  functor  to  the  co-chain complexes  and  the  co-chain  maps  in  (\ref{oadnfmao}),    
 we  have  that  the   commutative  diagram        (\ref{eq-diag-l}) is   functorial   with respect  
to    $\varphi$.  
 \end{proof}
 
 Let  ${\bf  MV}_*(\mathcal{K}_1,\mathcal{K}_2,\alpha,m)$  and   $ {\bf  MV}_*(\mathcal{K}'_1,\mathcal{K}'_2,\alpha',m')$   be  two  Mayer-Vietoris  sequences  of  constrained  homology  of  simplicial complexes.  
  A  morphism       
  \begin{eqnarray*}
 \Phi_*:  ~~~  {\bf  MV}_*(\mathcal{K}_1,\mathcal{K}_2,\alpha,m)\longrightarrow  {\bf  MV}_*(\mathcal{K}'_1,\mathcal{K}'_2,\alpha',m') 
  \end{eqnarray*}
     is  a  sequence  of  homomorphisms  of    $R$-modules,  denoted   as  vertical arrows,   such that the diagram  commutes  
 \begin{eqnarray*}
\xymatrix{
\cdots \ar[r]  &  H_n(\mathcal{K}_1 \cap  \mathcal{K}_2,\alpha, m)\ar[r]\ar[d] 
& H_n (\mathcal{K}_1,\alpha, m) \oplus   H_n( \mathcal{K}_2,\alpha, m)\ar[r]\ar[d] &
\\
\cdots\ar[r]  &  H_n(\mathcal{K}'_1 \cap  \mathcal{K}'_2,\alpha', m')\ar[r] 
& H_n(\mathcal{K}'_1,\alpha', m') \oplus   H_n( \mathcal{K}'_2,\alpha', m')\ar[r] &
}\\
\xymatrix{
\ar[r]
&  H_n(\mathcal{K}_1 \cup  \mathcal{K}_2,\alpha, m)\ar[r] \ar[d] & H_{n-1}(\mathcal{K}_1 \cap  \mathcal{K}_2,\alpha, m)\ar[r]\ar[d]  &\cdots\\
 \ar[r] 
&  H_n(\mathcal{K}'_1 \cup  \mathcal{K}'_2,\alpha', m')\ar[r]  & H_{n-1}(\mathcal{K}'_1 \cap  \mathcal{K}'_2,\alpha', m')\ar[r]  &\cdots  
}
 \end{eqnarray*}
  Similarly,   let  ${\bf  MV}^*(\mathcal{L}_1,\mathcal{L}_2,\omega,m)$  and   $ {\bf  MV}^*(\mathcal{L}'_1,\mathcal{L}'_2,\omega',m')$   be  two  Mayer-Vietoris  sequences  of  constrained  cohomology  of  independence  hypergraphs.     
  A  morphism       
  \begin{eqnarray*}
 \Phi^*:  ~~~  {\bf  MV}^*(\mathcal{L}_1,\mathcal{L}_2,\omega,m)\longrightarrow  {\bf  MV}^*(\mathcal{L}'_1,\mathcal{L}'_2,\omega',m') 
  \end{eqnarray*}
     is  a  sequence  of  homomorphisms  of    $R$-modules,  denoted  as  vertical arrows,  such that the diagram  commutes  
 \begin{eqnarray*}
\xymatrix{
\cdots \ar[r]  &  H^n(\mathcal{L}_1 \cap  \mathcal{L}_2,\omega, m)\ar[r]\ar[d] 
& H^n (\mathcal{L}_1,\omega, m) \oplus   H^n( \mathcal{L}_2,\omega, m)\ar[r]\ar[d] &
\\
\cdots\ar[r]  &  H^n(\mathcal{L}'_1 \cap  \mathcal{L}'_2,\omega', m')\ar[r] 
& H^n(\mathcal{L}'_1,\omega', m') \oplus   H^n( \mathcal{L}'_2,\omega', m')\ar[r] &
}\\
\xymatrix{
\ar[r]
&  H^n(\mathcal{L}_1 \cup  \mathcal{L}_2,\omega, m)\ar[r] \ar[d] & H^{n+1}(\mathcal{L}_1 \cap  \mathcal{L}_2,\omega, m)\ar[r]\ar[d]  &\cdots\\
 \ar[r] 
&  H^n(\mathcal{L}'_1 \cup  \mathcal{L}'_2,\omega', m')\ar[r]  & H^{n+1}(\mathcal{L}'_1 \cap  \mathcal{L}'_2,\omega', m')\ar[r]  &\cdots 
}
 \end{eqnarray*} 
The  next  proposition  is  a  slight  generalization   of   \cite[Proposition~4.1]{mv}.  
\begin{proposition}
\label{pr-3.9nm21}
For  any     
simplicial  complexes  $\mathcal{K}_1$  and  $\mathcal{K}_2$   and  any    
 independence  hypergraphs   $\mathcal{L}_1$  and  $\mathcal{L}_2$      with  their  vertices from  $V$,
\begin{enumerate}[(1).]
\item
we  have  an  induced  morphism  
\begin{eqnarray}\label{eq-5.1.bjkl1}
\beta_*: ~~~ {\bf  MV}_*(\mathcal{K}_1,\mathcal{K}_2,\alpha,m)\longrightarrow  {\bf  MV}_*(\mathcal{K}_1,\mathcal{K}_2,\alpha,m-2s). 
\end{eqnarray}
 Moreover,   $(\beta_1\wedge\beta_2)_*= (\beta_1)_* (\beta_2)_*= (\beta_2)_* (\beta_1)_*$,     i.e.  the  diagram  commutes 
 \begin{eqnarray}\label{eq-5.1vjwfg1}
 \xymatrix{
 {\bf  MV}_*(\mathcal{K}_1,\mathcal{K}_2,\alpha,m) \ar[r]^-{(\beta_1)_*} \ar[rdd]^-{(\beta_1\wedge \beta_2)_*}
 \ar[dd]_-{(\beta_2)_*}  &{\bf  MV}_*(\mathcal{K}_1,\mathcal{K}_2,\alpha,m-2s_1)\ar[dd]^-{(\beta_2)_*}\\
 \\
 {\bf  MV}_*(\mathcal{K}_1,\mathcal{K}_2,\alpha,m-2s_2)\ar[r]^-{(\beta_1)_*}& {\bf  MV}_*(\mathcal{K}_1,\mathcal{K}_2,\alpha,m-2s_1-2s_2).  
 }
 \end{eqnarray}
Furthermore,   the  diagram  is  functorial  with  respect to  simplicial  maps   induced  by  bijective  maps  between  the  vertices; 

\item
we  have  an  induced  morphism  
\begin{eqnarray}\label{eq-5.1.bjkl2}
\mu_*: ~~~ {\bf  MV}^*(\mathcal{L}_1,\mathcal{L}_2,\omega,m)\longrightarrow  {\bf  MV}^*(\mathcal{L}_1,\mathcal{L}_2,\omega,m+2s). 
\end{eqnarray}
Moreover,   $(\mu_1\wedge\mu_2)_*= (\mu_1)_* (\mu_2)_*= (\mu_2)_* (\mu_1)_*$,     i.e.  the  diagram  commutes 
 \begin{eqnarray}\label{eq-5.1vjwfg2}
 \xymatrix{
 {\bf  MV}^*(\mathcal{L}_1,\mathcal{L}_2,\omega,m) \ar[r]^-{(\mu_1)_*} \ar[rdd]^-{(\mu_1\wedge \mu_2)_*}
 \ar[dd]_-{(\mu_2)_*}  &{\bf  MV}^*(\mathcal{L}_1,\mathcal{L}_2,\omega,m+2s_1)\ar[dd]^-{(\mu_2)_*}\\
 \\
 {\bf  MV}^*(\mathcal{L}_1,\mathcal{L}_2,\omega,m+2s_2)\ar[r]^-{(\mu_1)_*}& {\bf  MV}^*(\mathcal{L}_1,\mathcal{L}_2,\omega,m+2s_1+2s_2).  
 }
 \end{eqnarray}
 Furthermore,   the  diagram  is  functorial  with  respect to  morphisms  of  independence  hypergraphs 
 induced  by  bijective  maps  between  the  vertices.   
\end{enumerate}
\end{proposition}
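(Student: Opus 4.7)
The plan is to reduce everything to a chain-level statement, then pass to (co)homology and invoke naturality of the Mayer--Vietoris connecting homomorphism, and finally to combine with Proposition~\ref{pr-5.a1} for the functoriality clause. I focus on (1); part (2) is entirely parallel after replacing $R_*$ by $R^*$, $\alpha$ by $\omega$, and the action of ${\rm Ext}_*(V)$ by that of ${\rm Ext}^*(V)$, with the degree shift $-2s$ becoming $+2s$.

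First, I would observe that for a simplicial complex $\mathcal{K}$, Lemma~\ref{le-3.1ccv}(1) lets the whole algebra ${\rm Ext}_*(V)$ act on $R_*(\mathcal{K})$, so wedging with $\beta\in{\rm Ext}_{2s}(V)$ is a well-defined homomorphism $\beta:R_n(\mathcal{K})\longrightarrow R_{n-2s}(\mathcal{K})$. Since $\alpha$ has odd degree and $\beta$ has even degree, $\alpha\wedge\beta=(-1)^{(2t+1)(2s)}\beta\wedge\alpha=\beta\wedge\alpha$, so as operators on $R_*(\mathcal{K})$ we have $\alpha\circ\beta=\beta\circ\alpha$. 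Consequently $\beta$ defines a chain map $\beta:R_*(\mathcal{K},\alpha,m)\longrightarrow R_*(\mathcal{K},\alpha,m-2s)$, and this chain map preserves the natural inclusion $R_*(\mathcal{K}_1\cap\mathcal{K}_2)\hookrightarrow R_*(\mathcal{K}_i)$ and the sum $R_*(\mathcal{K}_1)+R_*(\mathcal{K}_2)=R_*(\mathcal{K}_1\cup\mathcal{K}_2)$, since wedging is defined entry-wise on generating hyperedges. Hence $\beta$ is a chain morphism between the short exact sequences used to build ${\bf MV}_*(\mathcal{K}_1,\mathcal{K}_2,\alpha,m)$ and ${\bf MV}_*(\mathcal{K}_1,\mathcal{K}_2,\alpha,m-2s)$.

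Next, naturality of the connecting homomorphism for the snake lemma yields the induced morphism of long exact sequences~(\ref{eq-5.1.bjkl1}). To prove the commutativity~(\ref{eq-5.1vjwfg1}), I would use associativity of the exterior product: at the chain level, for any $\xi\in R_*(\mathcal{K})$,
\begin{eqnarray*}
\beta_1\wedge(\beta_2\wedge\xi)=(\beta_1\wedge\beta_2)\wedge\xi=(\beta_2\wedge\beta_1)\wedge\xi=\beta_2\wedge(\beta_1\wedge\xi),
\end{eqnarray*}
where the middle equality uses $\beta_1\wedge\beta_2=(-1)^{(2s_1)(2s_2)}\beta_2\wedge\beta_1=\beta_2\wedge\beta_1$. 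Passing to the Mayer--Vietoris sequence term by term and invoking naturality of the connecting homomorphism, the diagram~(\ref{eq-5.1vjwfg1}) commutes on each component of ${\bf MV}_*$.

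Finally, for functoriality of the diagram~(\ref{eq-5.1vjwfg1}) with respect to a simplicial map $\varphi:\mathcal{K}_i\longrightarrow\mathcal{K}'_i$ induced by a bijection of the vertices, I would combine Proposition~\ref{pr-5.a1}(1) with the fact that ${\rm Ext}_*(\varphi)$ is a homomorphism of exterior algebras, so that ${\rm Ext}_*(\varphi)(\beta_i\wedge\xi)={\rm Ext}_*(\varphi)(\beta_i)\wedge{\rm Ext}_*(\varphi)(\xi)$. This gives the commutativity of each square of the form~(\ref{diag-mgoan1}) for the three choices $\beta=\beta_1,\beta_2,\beta_1\wedge\beta_2$, and these squares paste together into the required cube expressing functoriality. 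The main (essentially bookkeeping) obstacle I anticipate is tracking the grading shifts $m\mapsto m-2s_i$ consistently across the three slots $\mathcal{K}_1\cap\mathcal{K}_2$, $\mathcal{K}_1\oplus\mathcal{K}_2$, and $\mathcal{K}_1\cup\mathcal{K}_2$ so that the connecting homomorphism lands in the correct term; this is already handled in~\cite{mv} for the classical case and only needs rechecking under the mild generalization to coefficients in $R$.
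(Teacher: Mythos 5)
Your proof is correct and follows the same route the paper intends: the paper's own proof is a one-line deferral to the argument of \cite[Proposition~4.1]{mv}, and what you have written is precisely that argument spelled out -- $\beta$ commutes with $\alpha$ by graded commutativity of ${\rm Ext}_*(V)$, hence gives a morphism of the Mayer--Vietoris short exact sequences of chain complexes, and naturality of the connecting homomorphism plus associativity of the wedge product yields (\ref{eq-5.1.bjkl1}) and (\ref{eq-5.1vjwfg1}), with functoriality coming from Proposition~\ref{pr-5.a1} and the fact that ${\rm Ext}_*(\varphi)$ is an algebra homomorphism. Your bookkeeping concern about the degree shift is also handled correctly, since $m-2s=\lambda'(2t+1)+q'$ realigns the indexing of (\ref{eq-vb345}) so that $\beta$ sends the $n$-th term to the $n$-th term.
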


\begin{proof}
By  a  similar argument  of  \cite[Proposition~4.1]{mv}~(1)  and  (2),  we  will  obtain  (1)  and  (2)  respectively.  
\end{proof}

 \subsection{Constrained  (co)homology  for  general  hypergraphs}

 We   investigate  the  constrained  homology of  the  (lower-)associated  simplicial  complex  
   and  the  constrained  cohomology  of  the  (lower-)associated  independence  hypergraph   for     a   general  hypergraph.    
 Let  $\mathcal{H}$  be  a  hypergraph  with  its   vertices  from  $V$.  The {\it associated  simplicial complex}  of  $\mathcal{H}$  is  the  smallest  simplicial complex  containing   $\mathcal{H}$   explicitly  given by  (cf.    \cite{hg1,parks, jktr1000,jktr2,stability11111,jktr3})
   \begin{eqnarray*}
 \Delta\mathcal{H}=\{\tau\in\Delta[V]\mid \tau\subseteq \sigma  {\rm~for~some~} \sigma\in \mathcal{H}\}.
 \end{eqnarray*}
 The {\it  lower-associated  simplicial  complex}   of  $\mathcal{H}$   is   the  largest  simplicial complex  contained  in  $\mathcal{H}$  explicitly  given  by  (cf.     \cite{jktr1000,jktr2,stability11111, jktr3})
   \begin{eqnarray*}
 \delta\mathcal{H}=\{\sigma\in\mathcal{H} \mid  \tau\in\mathcal{H}  {\rm ~for~any~}\tau\subseteq  \sigma {\rm~and~}  \tau\neq\emptyset\}. 
 \end{eqnarray*}
  The  {\it  associated independence  hypergraph}  of  $\mathcal{H}$   is   the   smallest  independence hypergraph  containing   $\mathcal{H}$  explicitly  given  by   (cf.  \cite{grh1})
 \begin{eqnarray*}
 \bar  \Delta\mathcal{H}  = \{\tau\in\Delta[V]\mid \tau\supseteq \sigma {\rm~for~some~} \sigma\in \mathcal{H}\}.  
  \end{eqnarray*}
  The  {\it lower-associated independence  hypergraph}  of  $\mathcal{H}$   is   the   largest   independence hypergraph  explicitly  given  by   (cf.  \cite{grh1})
  \begin{eqnarray*}
  \bar\delta \mathcal{H} =\{\sigma\in\mathcal{H}\mid  \tau\in\mathcal{H}  {\rm ~for~any~}\tau\supseteq  \sigma{\rm~and~}    \tau\in  \Delta[  V]\}.
  \end{eqnarray*}

  \begin{lemma}\label{le-5.a}
  For  any  hypergraph  $\mathcal{H}$,  
  \begin{enumerate}[(1).]
  \item
  The  canonical  inclusion  $\iota:   \delta\mathcal{H}\longrightarrow  \Delta\mathcal{H}$
   of  simplicial  complexes   
  induces  a  chain  map  
  \begin{eqnarray*}
   \iota_\#: ~~~ R_*(\delta\mathcal{H},\alpha,m)\longrightarrow  R_*(\Delta\mathcal{H},\alpha,m);      
  \end{eqnarray*}
  \item
  The canonical  inclusion  
  $\bar\iota:  \bar\delta\mathcal{H}\longrightarrow \bar\Delta\mathcal{H}$  
  of  independence  hypergraphs  induces  a  co-chain  map 
    \begin{eqnarray*}
  \bar\iota_\#:  ~~~ R^*(\bar\delta\mathcal{H},\omega,m)\longrightarrow  R^*(\bar\Delta\mathcal{H},\omega,m).  
  \end{eqnarray*}
\end{enumerate}
  \end{lemma}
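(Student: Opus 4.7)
The plan is to define $\iota_\#$ as the degree-wise canonical inclusion of free $R$-modules induced by the set inclusion $\delta\mathcal{H}\subseteq\Delta\mathcal{H}$, and to define $\bar\iota_\#$ analogously from $\bar\delta\mathcal{H}\subseteq\bar\Delta\mathcal{H}$; then in each case to verify that this degree-wise inclusion commutes with the respective differential. The containments of hyperedge sets are immediate from the definitions of $\Delta$, $\delta$, $\bar\Delta$, $\bar\delta$: one has $\delta\mathcal{H}\subseteq\mathcal{H}\subseteq\Delta\mathcal{H}$ and $\bar\delta\mathcal{H}\subseteq\mathcal{H}\subseteq\bar\Delta\mathcal{H}$, so at each degree $k$ there is a canonical injection $R_k(\delta\mathcal{H})\hookrightarrow R_k(\Delta\mathcal{H})$ (and similarly $R_k(\bar\delta\mathcal{H})\hookrightarrow R_k(\bar\Delta\mathcal{H})$) sending a basis hyperedge to itself.

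For (1), the differential of $R_*(\mathcal{K},\alpha,m)$ is the action of the fixed element $\alpha\in{\rm Ext}_{2t+1}(V)$, which is an $R$-linear combination of wedge products $\frac{\partial}{\partial v_{i_1}}\wedge\cdots\wedge\frac{\partial}{\partial v_{i_{2t+1}}}$. By Lemma~\ref{le-3.1ccv}(1), since both $\delta\mathcal{H}$ and $\Delta\mathcal{H}$ are simplicial complexes, each such wedge product restricts to an endomorphism of $R_*(\delta\mathcal{H})$ and of $R_*(\Delta\mathcal{H})$, so $\alpha$ does as well. To check $\iota_\#\circ\alpha=\alpha\circ\iota_\#$ it suffices to evaluate on an arbitrary basis simplex $\sigma\in R_n(\delta\mathcal{H})$: the value $\alpha(\sigma)$ is a signed $R$-linear combination of subsets obtained from $\sigma$ by deleting the vertices listed in each monomial of $\alpha$, and since $\delta\mathcal{H}$ is closed under taking nonempty subsets, every such subset already lies in $\delta\mathcal{H}$. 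Consequently $\iota_\#(\alpha(\sigma))$ and $\alpha(\iota_\#(\sigma))$ are the same formal combination inside $R_{n-(2t+1)}(\Delta\mathcal{H})$, which exhibits $\iota_\#$ as a chain map of the shifted complexes defining $R_*(\delta\mathcal{H},\alpha,m)$ and $R_*(\Delta\mathcal{H},\alpha,m)$.

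For (2), the argument is dual, using Lemma~\ref{le-3.1ccv}(2) in place of (1). Each $dv$ restricts to an endomorphism of $R_*(\bar\delta\mathcal{H})$ and of $R_*(\bar\Delta\mathcal{H})$ because both are independence hypergraphs, so the same holds for $\omega\in{\rm Ext}^{2t+1}(V)$. For a basis hyperedge $\sigma\in\bar\delta\mathcal{H}$, the value $\omega(\sigma)$ is a signed sum of enlargements $\sigma\sqcup\{u_{j_1},\ldots,u_{j_{2t+1}}\}\in\Delta[V]$, and since $\bar\delta\mathcal{H}$ is closed under taking finite supersets, each such enlargement is already a hyperedge of $\bar\delta\mathcal{H}$. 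Hence $\bar\iota_\#\circ\omega$ and $\omega\circ\bar\iota_\#$ agree on every basis element, so $\bar\iota_\#$ is a co-chain map. The only substantive input in the whole argument is Lemma~\ref{le-3.1ccv}, which supplies exactly the closure properties needed to restrict the differentials to the smaller complexes; there is no real obstacle, only the bookkeeping check that face-taking (resp.\ superset-taking) does not leave the source complex.
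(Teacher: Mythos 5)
Your proposal is correct and follows essentially the same route as the paper: the paper also takes $\iota_\#$ and $\bar\iota_\#$ to be the degree-wise canonical inclusions (induced by the identity on $V$, which fixes $\alpha$ and $\omega$) and observes that the resulting ladder of inclusions commutes with the action of $\alpha$ (resp.\ $\omega$). Your write-up merely makes explicit, via Lemma~\ref{le-3.1ccv}, the closure properties that the paper leaves implicit when asserting the commutativity of its diagrams.
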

  \begin{proof}
  The  canonical  inclusions  $\iota$  and  $\bar\iota$   are  both   induced  by the  identity  map  ${\rm  id}$  on  $V$.  
  The  identity  map   ${\rm  id}$  on  $V$  induces  the  identity  map  ${\rm  Ext}_*({\rm  id})$  on  ${\rm  Ext}_*(V)$   
     which  sends  $\alpha$  to  itself  and  the  identity  map   ${\rm  Ext}^*({\rm  id})$  on  ${\rm  Ext}^*(V)$   
     which  sends  $\omega$  to  itself.  
  By  (\ref{eq-vb345}),   the  canonical  inclusion  $\iota:   \delta\mathcal{H}\longrightarrow  \Delta\mathcal{H}$ 
 induces  a  commutative  diagram    
 \begin{eqnarray*}
\xymatrix{
\cdots\ar[r]^-{\alpha} 
& R_{(n+\lambda)(2t+1)+q}(\delta\mathcal{H})\ar[r]^-{\alpha}\ar[d] 
 &R_{(n-1+\lambda)(2t+1)+q}(\delta\mathcal{H})\ar[r]^-{\alpha}\ar[d] &
 \\
 \cdots\ar[r]^-{\alpha} 
& R_{(n+\lambda)(2t+1)+q}(\Delta\mathcal{H})\ar[r]^-{\alpha} 
 &R_{(n-1+\lambda)(2t+1)+q}(\Delta\mathcal{H})\ar[r]^-{\alpha} &
 }\\
\xymatrix{
  \cdots\ar[r] ^-{\alpha} 
 &R_{(1+\lambda)(2t+1)+q}(\delta\mathcal{H})\ar[r]^-{\alpha} \ar[d]
 &R_{\lambda(2t+1)+q}(\delta\mathcal{H})\ar[r]^-{\alpha} \ar[d]
 &0\\
 \cdots\ar[r]^-{\alpha} 
 &R_{(1+\lambda)(2t+1)+q}(\Delta\mathcal{H})\ar[r]^-{\alpha} 
 &R_{\lambda(2t+1)+q}(\Delta\mathcal{H})\ar[r]^-{\alpha} 
 &0,
 }
\end{eqnarray*}
where   each  row  is  
  a chain complex.  
  The  vertical  maps  give  a  chain  map  
$  \iota_\#$.   We  obtain  (1).   
     By  (\ref{eq-vb368}),   the  canonical  inclusion  $\bar\iota:   \bar\delta\mathcal{H}\longrightarrow  \bar\Delta\mathcal{H}$ 
 induces  a  commutative  diagram    
 \begin{eqnarray*}
\xymatrix{
\cdots 
& R_{(n+\lambda)(2t+1)+q}(\bar\delta\mathcal{H})\ar[d] \ar[l]_-{\omega} 
 &R_{(n-1+\lambda)(2t+1)+q}(\bar\delta\mathcal{H}) \ar[d]\ar[l]_-{\omega}  & \ar[l]_-{\omega} 
 \\
 \cdots 
& R_{(n+\lambda)(2t+1)+q}(\bar\Delta\mathcal{H})\ar[l]_-{\omega} 
 &R_{(n-1+\lambda)(2t+1)+q}(\bar\Delta\mathcal{H})\ar[l]_-{\omega} &\ar[l]_-{\omega} 
}\\
\xymatrix{
  \cdots 
 &R_{(1+\lambda)(2t+1)+q}(\bar\delta\mathcal{H})\ar[l]_-{\omega} \ar[d]
 &R_{\lambda(2t+1)+q}(\bar\delta\mathcal{H})\ar[l]_-{\omega} \ar[d]
 &0\ar[l]_-{\omega} \\
 \cdots
 &R_{(1+\lambda)(2t+1)+q}(\bar\Delta\mathcal{H})\ar[l]_-{\omega} 
 &R_{\lambda(2t+1)+q}(\bar\Delta\mathcal{H})\ar[l]_-{\omega} 
 &0\ar[l]_-{\omega} ,
 }
\end{eqnarray*}
where   each  row  is  
  a  co-chain   complex.  
  The  vertical  maps  give  a  co-chain  map  
  $
  \bar\iota_\#$.   We   obtain      (2).  
  \end{proof}
  
  \begin{proposition}\label{pr-5.mmm1}
  For  any     hypergraphs   $\mathcal{H}_1$  and  $\mathcal{H}_2$      with  vertices  from  $V$,  
  \begin{enumerate}[(1).]
  \item
   we  have  a  commutative  diagram  
   \begin{eqnarray*}
   \xymatrix{
   \cdots \ar[r]   &    H_n(\delta\mathcal{H}_1 \cap  \delta\mathcal{H}_2,\alpha, m) \ar[r] \ar[d]
 & H_n (\delta\mathcal{H}_1,\alpha, m) \oplus   H_n(\delta\mathcal{H}_2,\alpha, m)   \ar[d]\ar[r] &
 \\
\cdots \ar[r]   &    H_n(\Delta\mathcal{H}_1 \cap  \Delta\mathcal{H}_2,\alpha, m)  \ar[r] 
 & H_n (\Delta\mathcal{H}_1,\alpha, m) \oplus   H_n(\Delta\mathcal{H}_2,\alpha, m)\ar[r] &   
 }\\
\xymatrix{
 \ar[r]   
&   H_n(\delta\mathcal{H}_1 \cup  \delta\mathcal{H}_2,\alpha, m)\ar[r] \ar[d]  &H_{n-1}(\delta\mathcal{H}_1 \cap  \delta\mathcal{H}_2,\alpha, m)\ar[r] \ar[d]  &\cdots\\
\ar[r]  
&   H_n(\Delta\mathcal{H}_1 \cup  \Delta\mathcal{H}_2,\alpha, m)\ar[r] &H_{n-1}(\Delta\mathcal{H}_1 \cap  \Delta\mathcal{H}_2,\alpha, m)\ar[r] &\cdots  
 }  
   \end{eqnarray*}
   where  both  rows  are     long  exact  sequences  of  constrained  homology  groups  and  the  vertical  maps  are  induced  by  the  canonical  inclusions  from  $\delta\mathcal{H}_1$  into  $\Delta\mathcal{H}_1$  and  from  $\delta\mathcal{H}_2$  into  $\Delta\mathcal{H}_2$;  
   \item
   we  have  a  commutative  diagram  
   \begin{eqnarray*}
   \xymatrix{
   \cdots \ar[r]   &    H^n(\bar\delta\mathcal{H}_1 \cap \bar \delta\mathcal{H}_2,\omega, m) \ar[r] \ar[d]
 & H^n (\bar\delta\mathcal{H}_1,\omega, m) \oplus   H^n(\bar\delta\mathcal{H}_2,\omega, m)   \ar[d] \ar[r]&
 \\
\cdots \ar[r]   &    H^n(\bar\Delta\mathcal{H}_1 \cap  \bar\Delta\mathcal{H}_2,\omega, m)  \ar[r] 
 & H^n (\bar\Delta\mathcal{H}_1,\omega, m) \oplus   H^n(\bar\Delta\mathcal{H}_2,\omega, m)   \ar[r] &
 }\\
\xymatrix{
 \ar[r]   
&   H^n(\bar\delta\mathcal{H}_1 \cup  \bar\delta\mathcal{H}_2,\omega, m)\ar[r] \ar[d]  &H^{n+1}(\bar\delta\mathcal{H}_1 \cap  \bar\delta\mathcal{H}_2,\omega, m)\ar[r] \ar[d]  &\cdots\\
\ar[r]  
&   H^n(\bar\Delta\mathcal{H}_1 \cup  \bar\Delta\mathcal{H}_2,\omega, m)\ar[r] &H^{n+1}(\bar\Delta\mathcal{H}_1 \cap  \bar\Delta\mathcal{H}_2,\omega, m)\ar[r] &\cdots  
 }  
   \end{eqnarray*}
   where  both  rows  are     long  exact  sequences  of  constrained cohomology groups  and  the  vertical  maps  are  induced  by  the  canonical  inclusions  from  $\bar\delta\mathcal{H}_1$  into  $\bar\Delta\mathcal{H}_1$  and  from  $\bar\delta\mathcal{H}_2$  into  $\bar\Delta\mathcal{H}_2$. 
   \end{enumerate}
  \end{proposition}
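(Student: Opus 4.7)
The plan is to deduce both commutative diagrams from the naturality of the Mayer--Vietoris long exact sequences combined with Lemma~\ref{le-5.a}. First observe that $\delta\mathcal{H}_1$, $\delta\mathcal{H}_2$, $\Delta\mathcal{H}_1$ and $\Delta\mathcal{H}_2$ are all simplicial complexes, and that arbitrary intersections and unions of simplicial complexes are again simplicial complexes; in particular, both $\delta\mathcal{H}_1\cap\delta\mathcal{H}_2$ and $\delta\mathcal{H}_1\cup\delta\mathcal{H}_2$ are simplicial complexes, and analogously for $\Delta$. Hence the Mayer--Vietoris sequence~(\ref{eq-lex1a}) applies to the pair $(\delta\mathcal{H}_1,\delta\mathcal{H}_2)$, yielding the top row of the diagram in (1), and likewise to $(\Delta\mathcal{H}_1,\Delta\mathcal{H}_2)$, yielding the bottom row. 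An entirely parallel observation (using that intersections and unions of independence hypergraphs are independence hypergraphs) shows that~(\ref{eq-lex2b}) applies to both $(\bar\delta\mathcal{H}_1,\bar\delta\mathcal{H}_2)$ and $(\bar\Delta\mathcal{H}_1,\bar\Delta\mathcal{H}_2)$, producing the two rows in (2).

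Next I would construct the vertical maps at the chain level and check compatibility. For (1), since $\delta\mathcal{H}_i\subseteq\Delta\mathcal{H}_i$, the canonical inclusions of simplicial complexes yield, via Lemma~\ref{le-5.a}~(1), chain maps
\begin{eqnarray*}
\iota_\#\colon R_*(\delta\mathcal{H}_i,\alpha,m)\longrightarrow R_*(\Delta\mathcal{H}_i,\alpha,m),\quad i=1,2,
\end{eqnarray*}
and similarly chain maps $\iota_\#$ on the intersections $\delta\mathcal{H}_1\cap\delta\mathcal{H}_2\hookrightarrow\Delta\mathcal{H}_1\cap\Delta\mathcal{H}_2$ and on the unions $\delta\mathcal{H}_1\cup\delta\mathcal{H}_2\hookrightarrow\Delta\mathcal{H}_1\cup\Delta\mathcal{H}_2$. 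All four inclusions are induced by the identity map of $V$, so the resulting vertical arrows fit together into a ladder of short exact sequences
\begin{eqnarray*}
\xymatrix@C=0.6em{
0\ar[r] & R_*(\delta\mathcal{H}_1\cap\delta\mathcal{H}_2,\alpha,m)\ar[r]\ar[d]_-{\iota_\#} & R_*(\delta\mathcal{H}_1,\alpha,m)\oplus R_*(\delta\mathcal{H}_2,\alpha,m)\ar[r]\ar[d]_-{\iota_\#\oplus\iota_\#} & R_*(\delta\mathcal{H}_1\cup\delta\mathcal{H}_2,\alpha,m)\ar[r]\ar[d]_-{\iota_\#} & 0\\
0\ar[r] & R_*(\Delta\mathcal{H}_1\cap\Delta\mathcal{H}_2,\alpha,m)\ar[r] & R_*(\Delta\mathcal{H}_1,\alpha,m)\oplus R_*(\Delta\mathcal{H}_2,\alpha,m)\ar[r] & R_*(\Delta\mathcal{H}_1\cup\Delta\mathcal{H}_2,\alpha,m)\ar[r] & 0.
}
\end{eqnarray*}
Commutativity of the two inner squares is immediate since all horizontal maps are induced by inclusions of simplicial complexes and are defined on canonical generators by signed restrictions.

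Applying the homology functor to the ladder above and invoking the naturality of the connecting homomorphism in the zig--zag lemma then yields the commutative diagram claimed in~(1). The argument for~(2) is formally dual: Lemma~\ref{le-5.a}~(2) provides the co-chain maps $\bar\iota_\#\colon R^*(\bar\delta\mathcal{H}_i,\omega,m)\to R^*(\bar\Delta\mathcal{H}_i,\omega,m)$ and the analogous maps on intersections and unions, and these assemble into a ladder of short exact sequences of co-chain complexes whose associated long exact sequences of cohomology yield the desired commutative diagram. The main subtle point is verifying that the connecting homomorphisms in the top and bottom rows are compatible under the vertical inclusion-induced maps; this is where naturality of the snake/zig--zag construction is the essential ingredient, but because the inclusions respect the direct sum decompositions used to build the Mayer--Vietoris short exact sequence, the verification reduces to an unwinding of definitions rather than any new computation.
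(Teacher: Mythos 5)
Your proposal is correct and follows essentially the same route as the paper: both obtain the two rows by specializing the Mayer--Vietoris sequence~(\ref{eq-lex1a}) (resp.~(\ref{eq-lex2b})) to the pairs $(\delta\mathcal{H}_1,\delta\mathcal{H}_2)$ and $(\Delta\mathcal{H}_1,\Delta\mathcal{H}_2)$ (resp.\ their barred analogues), and both obtain the vertical maps and the commutativity from the inclusion-induced chain maps of Lemma~\ref{le-5.a}. The only difference is presentational: the paper cites the already-established functoriality of the Mayer--Vietoris sequence (Proposition~\ref{pr-5.a1}) applied to $\iota_1$, $\iota_2$, $\iota_1\cap\iota_2$ and $\iota_1\cup\iota_2$, whereas you rederive that functoriality by writing out the ladder of short exact sequences and invoking naturality of the connecting homomorphism, which amounts to the same argument.
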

  
  \begin{proof}
  (1).   Let  $\mathcal{K}$  be  $\delta\mathcal{H}$  and  $\Delta\mathcal{H}$  respectively  in   (\ref{eq-lex1a}). 
  We  obtain     the  two  rows  in the  diagram  in  (1)   which   are  long  exact  sequences.  
    Let  $\iota_i:  \delta\mathcal{H}_i\longrightarrow  \Delta\mathcal{H}_i$   be  the  canonical  
    inclusions  of  the  lower-associated  simplicial  complex  of  $\mathcal{H}_i$   into  the   associated  simplicial  complex  of  $\mathcal{H}_i$,  $i=1,2$.   
    Both  $\iota_1$  and  $\iota_2$  are   induced  by  the  identity  map  on  $V$.  
      By  Proposition~\ref{pr-5.a1}~(1),   the  long  exact sequence     (\ref{eq-lex1a})  is  functorial  with  respect to   $\iota_1$,  $\iota_2$,  $\iota_1\cap\iota_2$  and  $\iota_1\cup\iota_2$.     Precisely,  the  chain  maps  
     \begin{eqnarray*}
       (\iota_i)_\#:  ~~~ R_*(\delta\mathcal{H}_i,\alpha,m)\longrightarrow  R_*(\Delta\mathcal{H}_i,\alpha,m),   
       \end{eqnarray*}
     $i=1,2$,   together  with  the  induced  chain  maps 
     \begin{eqnarray*}
       (\iota_1\cap\iota_2)_\#:  && R_*(\delta\mathcal{H}_1\cap\delta\mathcal{H}_2, \alpha,m )\longrightarrow  
       R_*(\Delta\mathcal{H}_1\cap\Delta\mathcal{H}_2, \alpha,m ),\\
       (\iota_1\cup\iota_2)_\#:  && R_*(\delta\mathcal{H}_1\cup\delta\mathcal{H}_2, \alpha,m )\longrightarrow  
       R_*(\Delta\mathcal{H}_1\cup\Delta\mathcal{H}_2, \alpha,m )
       \end{eqnarray*}
induce  the  vertical  maps   
    between  the  constrained  homology  groups  such  that the  diagram  in  (1)  commutes.

  (2).  Let  $\mathcal{L}$  be  $\bar\delta\mathcal{H}$  and  $\bar\Delta\mathcal{H}$  respectively  in   (\ref{eq-lex2b}). 
  We  obtain    the  two  rows  in the  diagram   in  (2)   which  are  long  exact  sequences.  
  Let  $\bar\iota_i:  \bar\delta\mathcal{H}_i\longrightarrow  \bar\Delta\mathcal{H}_i$   be  the  canonical  
    inclusions  of  the  lower-associated  independence  hypergraph   of  $\mathcal{H}_i$   into  the   associated   independence  hypergraph    of  $\mathcal{H}_i$,  $i=1,2$.   
    Both  $\bar\iota_1$  and  $\bar\iota_2$  are   induced  by  the  identity  map  on  $V$.  
      By  Proposition~\ref{pr-5.a1}~(2),   the  long  exact sequence     (\ref{eq-lex2b})  is  functorial  with  respect to   $\bar\iota_1$,  $\bar\iota_2$,  $\bar\iota_1\cap\bar\iota_2$  and  $\bar\iota_1\cup\bar\iota_2$.     Precisely,  the  co-chain  maps  
     \begin{eqnarray*}
       (\bar\iota_i)_\#:  ~~~ R^*(\bar\delta\mathcal{H}_i,\omega,m)\longrightarrow  R^*(\bar\Delta\mathcal{H}_i,\omega,m),   
       \end{eqnarray*}
     $i=1,2$,   together  with  the  induced  co-chain  maps 
     \begin{eqnarray*}
       (\bar\iota_1\cap\bar\iota_2)_\#:  && R^*(\bar\delta\mathcal{H}_1\cap\bar\delta\mathcal{H}_2, \omega,m )\longrightarrow  
       R^*(\bar\Delta\mathcal{H}_1\cap\bar\Delta\mathcal{H}_2, \omega,m ),\\
       (\bar\iota_1\cup\bar\iota_2)_\#:  && R^*(\bar\delta\mathcal{H}_1\cup\bar\delta\mathcal{H}_2, \omega,m )\longrightarrow  
       R^*(\bar\Delta\mathcal{H}_1\cup\bar\Delta\mathcal{H}_2, \omega,m )
       \end{eqnarray*}
induce  the  vertical  maps   
    between  the  constrained  cohomology  groups  such  that the  diagram  in  (2)  commutes.  
  \end{proof}
  
  \begin{definition}
  \begin{enumerate}[(1).]
  \item
  We  call  the  commutative  diagram  in  Proposition~\ref{pr-5.mmm1}~(1)  the  {\it   Mayer-Vietoris  sequence}  for  the  constrained  homology  of  the    hypergraph  pairs  $(\mathcal{H}_1, \mathcal{H}_2)$   with  respect to  $\alpha$  and  $m$.   We   denote  it  as    ${\bf   MV}_*(\mathcal{H}_1, \mathcal{H}_2, \alpha,m)$;   
  \item
    We  call  the  commutative  diagram  in  Proposition~\ref{pr-5.mmm1}~(2)  the  {\it   Mayer-Vietoris  sequence}  for  the  constrained  cohomology  of  the    hypergraph  pairs  $(\mathcal{H}_1, \mathcal{H}_2)$   with  respect to  $\omega$  and  $m$.   We   denote  it  as    ${\bf   MV}^*(\mathcal{H}_1, \mathcal{H}_2, \omega,m)$.  
  \end{enumerate}
  \end{definition}
  
\begin{remark}
Note  that 
${\bf   MV}_*(\mathcal{H}_1, \mathcal{H}_2, \alpha,m)$  consists  of  two  Mayer-Vietoris    sequences  
  of  simplicial  complexes   
\begin{eqnarray*}
  {\bf   MV}_*(\delta\mathcal{H}_1, \delta\mathcal{H}_2, \alpha,m),  
~~~~~~   {\bf   MV}_*(\Delta\mathcal{H}_1, \Delta\mathcal{H}_2, \alpha,m) 
\end{eqnarray*}
together  with  a  morphism 
\begin{eqnarray*}
 (\iota_1,\iota_2)_*:~~~ {\bf   MV}_*(\delta\mathcal{H}_1, \delta\mathcal{H}_2, \alpha,m)\longrightarrow  {\bf   MV}_*(\Delta\mathcal{H}_1, \Delta\mathcal{H}_2, \alpha,m)   
\end{eqnarray*}
and  ${\bf   MV}^*(\mathcal{H}_1, \mathcal{H}_2, \omega,m)$  consists  of  two  Mayer-Vietoris    sequences  
  of  independence  hypergraphs   
\begin{eqnarray*}
  {\bf   MV}^*(\bar\delta\mathcal{H}_1, \bar\delta\mathcal{H}_2, \omega,m),  
~~~~~~   {\bf   MV}^*(\bar\Delta\mathcal{H}_1, \bar\Delta\mathcal{H}_2, \omega,m) 
\end{eqnarray*}
together  with  a  morphism 
\begin{eqnarray*}
 (\bar\iota_1,\bar\iota_2)^*:~~~ {\bf   MV}^*(\bar\delta\mathcal{H}_1, \bar\delta\mathcal{H}_2, \omega,m)\longrightarrow  {\bf   MV}^*(\bar\Delta\mathcal{H}_1, \bar\Delta\mathcal{H}_2, \omega,m).     
\end{eqnarray*}
\end{remark}

Let  ${\bf  MV}_*(\mathcal{H}_1,\mathcal{H}_2,\alpha,m)$  and   $ {\bf  MV}_*(\mathcal{H}'_1,\mathcal{H}'_2,\alpha',m')$   be  two  Mayer-Vietoris  sequences  of  constrained  homology  of  hypergraphs.  
  A  {\it  morphism}       
  \begin{eqnarray}\label{eq-5.98aq1}
 \Phi_*:  ~~~  {\bf  MV}_*(\mathcal{H}_1,\mathcal{H}_2,\alpha,m)\longrightarrow  {\bf  MV}_*(\mathcal{H}'_1,\mathcal{H}'_2,\alpha',m') 
  \end{eqnarray}
     is   a   commutative  diagram 
     \begin{eqnarray}\label{eq-5.98aq2}
     \xymatrix{
     {\bf  MV}_*(\delta\mathcal{H}_1,\delta\mathcal{H}_2,\alpha,m)\ar[r]^-{\delta\Phi_*}\ar[d]  &      {\bf  MV}_*(\delta\mathcal{H}'_1,\delta\mathcal{H}'_2,\alpha',m')\ar[d]\\
          {\bf  MV}_*(\Delta\mathcal{H}_1,\Delta\mathcal{H}_2,\alpha,m)\ar[r]^-{\Delta\Phi_*}  &      {\bf  MV}_*(\Delta\mathcal{H}'_1,\Delta\mathcal{H}'_2,\alpha',m') 
     }
     \end{eqnarray} 
 where  the  vertical  maps  are  induced  by  the  canonical  inclusions  of  $\delta\mathcal{H}_i$  into  $\Delta\mathcal{H}_i$,  $i=1,2$.   
  Similarly,   let  ${\bf  MV}^*(\mathcal{H}_1,\mathcal{H}_2,\omega,m)$  and   $ {\bf  MV}^*(\mathcal{H}'_1,\mathcal{H}'_2,\omega',m')$   be  two  Mayer-Vietoris  sequences  of  constrained  cohomology  of  hypergraphs.  
  A  {\it  morphism}       
  \begin{eqnarray}\label{eq-5.98aq3}
 \Phi^*:  ~~~  {\bf  MV}^*(\mathcal{H}_1,\mathcal{H}_2,\omega,m)\longrightarrow  {\bf  MV}^*(\mathcal{H}'_1,\mathcal{H}'_2,\omega',m') 
  \end{eqnarray}
     is   a   commutative  diagram 
     \begin{eqnarray}\label{eq-5.98aq5}
     \xymatrix{
     {\bf  MV}^*(\bar\delta\mathcal{H}_1,\bar\delta\mathcal{H}_2,\omega,m)\ar[r]^-{\bar\delta\Phi^*}\ar[d]  &      {\bf  MV}^*(\bar\delta\mathcal{H}'_1,\bar\delta\mathcal{H}'_2,\omega',m')\ar[d]\\
          {\bf  MV}^*(\bar\Delta\mathcal{H}_1,\bar\Delta\mathcal{H}_2,\omega,m)\ar[r]^-{\bar\Delta\Phi^*}  &      {\bf  MV}^*(\bar\Delta\mathcal{H}'_1,\bar\Delta\mathcal{H}'_2,\omega',m') 
     }
     \end{eqnarray} 
 where  the  vertical  maps  are  induced  by  the  canonical  inclusions  of    $\bar\delta\mathcal{H}_i$  into  $\bar\Delta\mathcal{H}_i$,  $i=1,2$.

  Let  $\mathcal{H}$  and  $\mathcal{H}'$  be   hypergraphs  with  vertices  from  $V$  and  $V'$ respectively.  
  Let  $\varphi:  \mathcal{H}\longrightarrow \mathcal{H}'$  be  a  morphism  of  hypergraphs.  
  Then  we  have  a  commutative diagram  
  \begin{eqnarray}\label{eq-vmbg1}
  \xymatrix{
  \delta\mathcal{H} \ar[r]^-{\delta\varphi} \ar[d]   & \delta\mathcal{H}'  \ar[d] \\
  \mathcal{H}\ar[r]^-{\varphi} \ar[d] &\mathcal{H}' \ar[d] \\
    \Delta\mathcal{H} \ar[r]^-{\Delta\varphi}    & \Delta\mathcal{H}'  
  }
  \end{eqnarray}
  where both  $\delta\varphi$  and  $\Delta\varphi$  are  simplicial  maps   and   the   vertical  maps  are  canonical  inclusions  of  hypergraphs  induced  by  the  identity  map   on  the  
  vertices.  If  $\varphi$  is  induced  by  a  bijective  map  $\varphi:  V\longrightarrow  V'$,  then   
  we  have  a  commutative diagram  
  \begin{eqnarray}\label{eq-vmbg2}
  \xymatrix{
  \bar\delta\mathcal{H} \ar[r]^-{\bar\delta\varphi} \ar[d]   & \bar\delta\mathcal{H}'  \ar[d] \\
  \mathcal{H}\ar[r]^-{\varphi} \ar[d] &\mathcal{H}' \ar[d] \\
    \bar\Delta\mathcal{H} \ar[r]^-{\bar\Delta\varphi}    & \bar\Delta\mathcal{H}'  
  }
  \end{eqnarray}
  where   both  $\bar\delta\varphi$  and  $\bar\Delta\varphi$  are  morphisms  of  independence  hypergraphs  and    the   vertical  maps  are  canonical  inclusions  of  hypergraphs  induced  by  the  identity  map   on  the  
  vertices. 
  \begin{theorem}[Main  Result  II]
  \label{pr-5.829a}
  For  any     hypergraphs   $\mathcal{H}_1$  and  $\mathcal{H}_2$      with  vertices  from  $V$,  
    \begin{enumerate}[(1).]
    \item
    we  have  an  induced  morphism  
\begin{eqnarray}\label{eq-5.2vla1}
\beta_*: ~~~ {\bf  MV}_*(\mathcal{H}_1,\mathcal{H}_2,\alpha,m)\longrightarrow  {\bf  MV}_*(\mathcal{H}_1,\mathcal{H}_2,\alpha,m-2s). 
\end{eqnarray}
 Moreover,   $(\beta_1\wedge\beta_2)_*= (\beta_1)_* (\beta_2)_*= (\beta_2)_* (\beta_1)_*$,     i.e.  the  diagram  commutes 
 \begin{eqnarray}\label{eq-5.2vlb1}
 \xymatrix{
 {\bf  MV}_*(\mathcal{H}_1,\mathcal{H}_2,\alpha,m) \ar[r]^-{(\beta_1)_*} \ar[rdd]^-{(\beta_1\wedge \beta_2)_*}
 \ar[dd]_-{(\beta_2)_*}  &{\bf  MV}_*(\mathcal{H}_1,\mathcal{H}_2,\alpha,m-2s_1)\ar[dd]^-{(\beta_2)_*}\\
 \\
 {\bf  MV}_*(\mathcal{H}_1,\mathcal{H}_2,\alpha,m-2s_2)\ar[r]^-{(\beta_1)_*}& {\bf  MV}_*(\mathcal{H}_1,\mathcal{H}_2,\alpha,m-2s_1-2s_2).  
 }
 \end{eqnarray}
Furthermore,   the  diagram  is  functorial  with  respect to  morphisms  of  hypergraphs   induced  by  bijective  maps  between  the  vertices;

    \item
      we  have  an  induced  morphism  
\begin{eqnarray}\label{eq-5.2vla2}
\mu_*: ~~~ {\bf  MV}^*(\mathcal{H}_1,\mathcal{H}_2,\omega,m)\longrightarrow  {\bf  MV}^*(\mathcal{H}_1,\mathcal{H}_2,\omega,m+2s). 
\end{eqnarray}
Moreover,   $(\mu_1\wedge\mu_2)_*= (\mu_1)_* (\mu_2)_*= (\mu_2)_* (\mu_1)_*$,     i.e.  the  diagram  commutes 
 \begin{eqnarray}\label{eq-5.2vlb2}
 \xymatrix{
 {\bf  MV}^*(\mathcal{H}_1,\mathcal{H}_2,\omega,m) \ar[r]^-{(\mu_1)_*} \ar[rdd]^-{(\mu_1\wedge \mu_2)_*}
 \ar[dd]_-{(\mu_2)_*}  &{\bf  MV}^*(\mathcal{H}_1,\mathcal{H}_2,\omega,m+2s_1)\ar[dd]^-{(\mu_2)_*}\\
 \\
 {\bf  MV}^*(\mathcal{H}_1,\mathcal{H}_2,\omega,m+2s_2)\ar[r]^-{(\mu_1)_*}& {\bf  MV}^*(\mathcal{H}_1,\mathcal{H}_2,\omega,m+2s_1+2s_2).  
 }
 \end{eqnarray}
 Furthermore,   the  diagram  is  functorial  with  respect to  morphisms  of     hypergraphs 
 induced  by  bijective  maps  between  the  vertices.   
\end{enumerate}
  \end{theorem}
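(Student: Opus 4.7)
The plan is to reduce the statement to Proposition~\ref{pr-3.9nm21}, exploiting the fact that by definition ${\bf MV}_*(\mathcal{H}_1,\mathcal{H}_2,\alpha,m)$ is nothing but the pair of Mayer-Vietoris sequences ${\bf MV}_*(\delta\mathcal{H}_1,\delta\mathcal{H}_2,\alpha,m)$ and ${\bf MV}_*(\Delta\mathcal{H}_1,\Delta\mathcal{H}_2,\alpha,m)$ of simplicial complexes, bundled together with the connecting morphism $(\iota_1,\iota_2)_*$ induced by the canonical inclusions $\delta\mathcal{H}_i\hookrightarrow\Delta\mathcal{H}_i$. Similarly, ${\bf MV}^*(\mathcal{H}_1,\mathcal{H}_2,\omega,m)$ decomposes into the pair for $\bar\delta\mathcal{H}_i$ and $\bar\Delta\mathcal{H}_i$ together with $(\bar\iota_1,\bar\iota_2)^*$.

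For (1), I would first apply Proposition~\ref{pr-3.9nm21}~(1) at the top row (the simplicial complexes $\delta\mathcal{H}_i$) and at the bottom row (the simplicial complexes $\Delta\mathcal{H}_i$) of the commutative square (\ref{eq-5.98aq2}). This yields induced morphisms $(\beta)_*$ on ${\bf MV}_*(\delta\mathcal{H}_1,\delta\mathcal{H}_2,\alpha,m)$ and on ${\bf MV}_*(\Delta\mathcal{H}_1,\Delta\mathcal{H}_2,\alpha,m)$. The key compatibility to check is that these two morphisms are themselves intertwined by the vertical inclusion-induced map $(\iota_1,\iota_2)_*$; that is, the cube
\[
\xymatrix@R=0.8em@C=1.5em{
{\bf MV}_*(\delta\mathcal{H}_1,\delta\mathcal{H}_2,\alpha,m) \ar[rr]^-{\beta_*}\ar[dd] & & {\bf MV}_*(\delta\mathcal{H}_1,\delta\mathcal{H}_2,\alpha,m-2s)\ar[dd]\\
\\
{\bf MV}_*(\Delta\mathcal{H}_1,\Delta\mathcal{H}_2,\alpha,m) \ar[rr]^-{\beta_*} & & {\bf MV}_*(\Delta\mathcal{H}_1,\Delta\mathcal{H}_2,\alpha,m-2s)
}
\]
should commute. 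At the chain level this reduces to the tautology that the inclusion-induced chain maps $(\iota_i)_\#: R_*(\delta\mathcal{H}_i,\alpha,m)\to R_*(\Delta\mathcal{H}_i,\alpha,m)$ from Lemma~\ref{le-5.a}~(1) commute with left multiplication by $\beta\in {\rm Ext}_{2s}(V)$, because $(\iota_i)_\#$ is simply the inclusion of graded submodules and $\beta$ acts by the same formula on both. Taking homology then delivers (\ref{eq-5.2vla1}). The identity $(\beta_1\wedge\beta_2)_*=(\beta_1)_*(\beta_2)_*=(\beta_2)_*(\beta_1)_*$ in (\ref{eq-5.2vlb1}) is inherited from the corresponding identity of Proposition~\ref{pr-3.9nm21}~(1) applied separately to the $\delta$-row and the $\Delta$-row of each vertex of the diagram (\ref{eq-5.1vjwfg1}).

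For the functoriality assertion, given a morphism $\varphi:\mathcal{H}_i\to\mathcal{H}'_i$ induced by a bijection $\varphi:V\to V'$, I would invoke the commutative diagram (\ref{eq-vmbg1}) to extract simplicial maps $\delta\varphi_i:\delta\mathcal{H}_i\to\delta\mathcal{H}'_i$ and $\Delta\varphi_i:\Delta\mathcal{H}_i\to\Delta\mathcal{H}'_i$ induced by the same vertex bijection; the vertical canonical inclusions make the resulting rectangles commute. Then Proposition~\ref{pr-3.9nm21}~(1) applied to $\delta\varphi_i$ and to $\Delta\varphi_i$ gives the functoriality of (\ref{eq-5.2vla1}) and of (\ref{eq-5.2vlb1}) at the top and bottom rows of (\ref{eq-5.98aq2}) respectively, while the naturality of the connecting morphism $(\iota_1,\iota_2)_*$ with respect to $\varphi$ ensures the whole diagram (\ref{eq-5.98aq2}) transforms coherently. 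Part (2) for cohomology and independence hypergraphs proceeds in exactly the parallel manner, using Proposition~\ref{pr-3.9nm21}~(2), Lemma~\ref{le-5.a}~(2), and the diagram (\ref{eq-vmbg2}); here the hypothesis that $\varphi$ is a bijection is essential for $\bar\delta\varphi$ and $\bar\Delta\varphi$ to be well-defined morphisms of independence hypergraphs.

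The main obstacle I anticipate is purely bookkeeping: verifying that at every stage the chain-level morphisms induced by $\beta$ (respectively $\mu$) commute with the four distinct inclusion-induced maps (for $\mathcal{H}_1$, $\mathcal{H}_2$, $\mathcal{H}_1\cap\mathcal{H}_2$, and $\mathcal{H}_1\cup\mathcal{H}_2$) simultaneously. Fortunately, each of these commutativities is forced by the fact that the inclusions $R_*(\delta(-),\alpha,m)\hookrightarrow R_*(\Delta(-),\alpha,m)$ are $R$-module inclusions of summands on which $\alpha$ and $\beta$ act by the same formulas, so no genuine computation is needed beyond appealing to Proposition~\ref{pr-3.9nm21} and the naturality discussed in the proof of Proposition~\ref{pr-5.mmm1}.
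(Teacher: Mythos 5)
Your proposal is correct and follows essentially the same route as the paper: apply Proposition~\ref{pr-3.9nm21} with $\mathcal{K}$ taken to be $\delta\mathcal{H}$ and $\Delta\mathcal{H}$ (resp.\ $\mathcal{L}$ taken to be $\bar\delta\mathcal{H}$ and $\bar\Delta\mathcal{H}$), use the functoriality of that proposition with respect to the canonical inclusions $\iota_i$ (resp.\ $\bar\iota_i$) to glue the two rows of the square (\ref{eq-5.98aq2}) into a single induced morphism, and then use the diagrams (\ref{eq-vmbg1}) and (\ref{eq-vmbg2}) for functoriality with respect to hypergraph morphisms induced by vertex bijections. Your chain-level justification of the commuting cube (that the inclusion-induced maps of Lemma~\ref{le-5.a} commute with multiplication by $\beta$, resp.\ $\mu$) is a correct and slightly more explicit account of what the paper dispatches by citing the functoriality clause of Proposition~\ref{pr-3.9nm21}.
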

  
  \begin{proof}
 (1).   Let  $\mathcal{K}$  be  $\delta\mathcal{H}$  and  $\Delta\mathcal{H}$   respectively  
  in  Proposition~\ref{pr-3.9nm21}~(1).  
  Since  (\ref{eq-5.1.bjkl1})  as  well as   the  commutative  diagram  (\ref{eq-5.1vjwfg1})   is  functorial  
  with  respect  to  the  simplicial  maps  $\iota_i:  \delta\mathcal{H}_i\longrightarrow   \Delta\mathcal{H}_i$, 
   $i=1,2$,    we  obtain  (\ref{eq-5.2vla1})  as   well  as  the  commutative  diagram   (\ref{eq-5.2vlb1}) of  the  Mayer-Vietoris  sequences  of  constrained  homology  of  hypergraphs   in  (1).  
  By  the  commutative  diagram  (\ref{eq-vmbg1}),   any  morphism   of  hypergraphs   $\varphi: \mathcal{H}\longrightarrow \mathcal{H}'$     induces    simplicial  maps  $\delta\varphi$  and  $\Delta\varphi$  
  such  that   the  diagram   (\ref{eq-vmbg1})  commutes.  Therefore,  since  
  the  diagram  in  Proposition~\ref{pr-3.9nm21}~(1)  is  functorial  with respect to  simplicial  maps  induced  by  bijective  maps  between  the  vertices,  
  it  follows  that    
  the  diagram  in  (1)  is  functorial  with  respect to  morphisms  of  hypergraphs  induced  by  bijective  maps  between  the  vertices.

  (2).    Let  $\mathcal{L}$  be  $\bar\delta\mathcal{H}$  and  $\bar\Delta\mathcal{H}$   respectively  
  in  Proposition~\ref{pr-3.9nm21}~(2).  
  Since  (\ref{eq-5.1.bjkl2})  as  well as   the  commutative  diagram  (\ref{eq-5.1vjwfg2})      is  functorial  
  with  respect  to  the  morphisms  of   independence   hypergraphs   $\bar\iota_i:  \bar\delta\mathcal{H}_i\longrightarrow   \bar\Delta\mathcal{H}_i$, 
   $i=1,2$,    which  are  induced  by  bijective  maps  between  the  vertices,  we  obtain  (\ref{eq-5.2vla2})  as   well  as  the  commutative  diagram   (\ref{eq-5.2vlb2}) of  the  Mayer-Vietoris  sequences  of  constrained  cohomology  of  hypergraphs   in  (2).  
  By  the  commutative  diagram  (\ref{eq-vmbg2}),   any  morphism   of  hypergraphs   $\varphi: \mathcal{H}\longrightarrow \mathcal{H}'$   induced  by  a  bijective  map   between  the  vertices   will        induce  morphisms  of  independence  hypergraphs       $\bar\delta\varphi$  and  $\bar\Delta\varphi$  
  such  that   the  diagram   (\ref{eq-vmbg1})  commutes.  Therefore,  since  
  the  diagram  in  Proposition~\ref{pr-3.9nm21}~(2)  is  functorial  with respect to  morphisms  of  independence hypergraphs     induced  by  bijective  maps  between  the  vertices,  
  it  follows  that    
  the  diagram  in  (2)  is  functorial  with  respect to  morphisms  of  hypergraphs    induced  by  bijective  maps  between  the  vertices.   
  \end{proof}

 \subsection{Examples}

\begin{example}\label{ex-8.111}
Let  $w:  V\longrightarrow  R$.   Let   $\mathcal{K}$  be  a  simplicial  complex  with  vertices  from  $V$.  Let $\mathcal{L}$  be  an  independence   hypergraph  with  vertices  from  $V$.  
\begin{enumerate}[(1).]
\item
 Let $\{v_0,v_1,\ldots, v_n\}$  be an $n$-simplex of  $\mathcal{K}$.
The usual boundary operator   (cf.  \cite[p. 105]{hatcher},  \cite[p.  28]{eat})  is  
a  homomorphism  of  graded  $R$-modules 
\begin{eqnarray*}
\partial_n:  ~~~  R_n(\mathcal{K})\longrightarrow   R_{n-1}(\mathcal{K})
\end{eqnarray*}
 given by
\begin{eqnarray}\label{eq-usual-boundary}
\partial_n\{v_0,v_1,\ldots, v_n\}=\sum_{i=0}^n  (-1)^i   \{v_0,\ldots, \widehat{v_i},\ldots, v_n\}.
  \end{eqnarray}
  It  is  direct  that   $\partial_n\circ\partial_{n+1}=0$.  We   have  
\begin{eqnarray}\label{eq-6.afv1}
\partial_n=\sum_{v\in V}\frac{\partial}{\partial v}.
\end{eqnarray}
  The   usual  homology  group  of  $\mathcal{K}$  is  
  \begin{eqnarray}\label{eq-ih-a1}
  H_n(\mathcal{K};R)= {\rm Ker}(\partial_n)/{\rm Im}(\partial_{n+1}).  
  \end{eqnarray}
\item
Let  $\{v_0,v_1,\ldots, v_n\}$  be an $n$-hyperedge   of  $\mathcal{L}$.   
Define  the  coboundary  operator  to  be   a   homomorphism  of   graded  $R$-modules 
\begin{eqnarray*}
\partial^n:  ~~~  R_n(\mathcal{L})\longrightarrow   R_{n+1}(\mathcal{L})
\end{eqnarray*}
given  by 
\begin{eqnarray}\label{usual-cob}
\partial^n(\{v_0,v_1,\ldots,v_n\})=\sum_{v\in V}  {{\rm sgn}(v)} \{v_0,v_1,\ldots,v_n,v\},    
\end{eqnarray} 
where 
\begin{eqnarray*}
{\rm sgn}(v)=\begin{cases}
0,  &{\rm  ~if  ~} v\in \{v_0,v_1,\ldots,v_n\},\\
(-1)^i, &{\rm  ~if ~}  v_{i-1}\prec  v\prec v_i{\rm~for~some~} 1\leq i\leq n,\\
(-1)^{n+1}, &{\rm  ~if ~}   v_n\prec v,\\
1, &{\rm  ~if ~}   v\prec v_0.  
\end{cases}
\end{eqnarray*}
It  is  direct  to  verify  that   $\partial^{n+1}\circ\partial^{n}=0$.  
We  have  
\begin{eqnarray}\label{eq-6.afv2}
\partial^n=\sum_{v\in V}  dv.
\end{eqnarray}
  We  define  the  $n$-th upper-homology group of $\mathcal{L}$  as     the quotient $R$-module 
\begin{eqnarray}\label{eq-ih-a2}
H^n(\mathcal{L};R)={\rm Ker}(\partial^n)/{\rm Im}(\partial^{n-1}). 
\end{eqnarray}

\item
 The  $w$-weighted   boundary  map  is  a  homomorphism  of  graded  $R$-modules  
\begin{eqnarray*}
\partial_n(w):~~~  R_n(\mathcal{K})\longrightarrow    R_{n-1}(\mathcal{K})
\end{eqnarray*}
given  by  (cf.  \cite{daw, chengyuan1,chengyuan2, chengyuan3})
\begin{eqnarray}\label{eq-99v1}
\partial_n(w) \{v_0,v_1,\ldots, v_n\}=\sum_{i=0}^n  (-1)^i  w(v_i) \{v_0,\ldots, \widehat{v_i},\ldots, v_n\}.
\end{eqnarray}
It  is  direct   that   $\partial_{n}(w)\circ \partial_{n+1}(w)=0$.  
We  have   
  \begin{eqnarray}\label{eq-6.afv88bbb}
\partial_n(w) =\sum_{v\in V}  w(v)  \frac{\partial}{\partial v}.
\end{eqnarray}
The  $w$-weighted  homology  of  $\mathcal{K}$  is  
\begin{eqnarray}\label{eq-whm12}
  H_n(\mathcal{K},w;R)= {\rm Ker}(\partial_n(w))/{\rm Im}(\partial_{n+1}(w)).  
\end{eqnarray}
\item
The  $w$-weighted   coboundary  map  is  a  homomorphism  of  graded  $R$-modules  
\begin{eqnarray*}
\partial^n(w):~~~  R_n(\mathcal{L})\longrightarrow    R_{n+1}(\mathcal{L})
\end{eqnarray*}
given  by  
\begin{eqnarray}\label{eq-99v281}
\partial^n(w) \{v_0,v_1,\ldots, v_n\}=\sum_{v\in V}  {{\rm sgn}(v)}  w(v) \{v_0,v_1,\ldots,v_n,v\}.
\end{eqnarray}
It  is  direct   that   $\partial^{n+1}(w)\circ \partial^{n}(w)=0$.  
  We  have    
  \begin{eqnarray}\label{eq-6.afv88aaa}
\partial^n(w) =\sum_{v\in V}  w(v) dv.
\end{eqnarray}
The  $w$-weighted  cohomology  of  $\mathcal{L}$  is  
\begin{eqnarray}\label{eq-whm1290}
  H^n(\mathcal{L},w;R)= {\rm Ker}(\partial^n(w))/{\rm Im}(\partial^{n-1}(w)).  
\end{eqnarray}
\item
If  $w(v)\equiv  1$,  then   (\ref{eq-99v1}),  (\ref{eq-6.afv88bbb})  and  (\ref{eq-whm12})  
are  reduced  to   (\ref{eq-usual-boundary}),  (\ref{eq-6.afv1})  and   (\ref{eq-ih-a1})   respectively;  
and      (\ref{eq-99v281}),  (\ref{eq-6.afv88aaa})  and  (\ref{eq-whm1290})  
are  reduced  to   (\ref{usual-cob}),  (\ref{eq-6.afv2})  and   (\ref{eq-ih-a2})   respectively. 
\end{enumerate}
\end{example}

 \begin{example}\label{ex-8.778}
  Let   $V=\{v_0,v_1,v_2\}$.      
\begin{enumerate}[(1).]
 \item
 Take  the  simplicial  complexes 
 \begin{eqnarray*}
 \mathcal{K}_1&=& \{\{v_0,v_1\},  \{v_0,v_2\}, \{v_0\},  \{v_1\},  \{v_2\}\},  \\
 \mathcal{K}_2&=& \{\{v_0,v_1\},  \{v_1,v_2\}, \{v_0\},  \{v_1\},  \{v_2\}\}. 
 \end{eqnarray*}
 By  a  direct  calculation, 
 \begin{eqnarray*}
 &\dfrac{\partial}{\partial  v_0}\{v_0,v_1\}  =  \{v_1\},~~~~~~   \dfrac{\partial}{\partial  v_1}\{v_0,v_1\}  = - \{v_0\}, 
 ~~~~~~ \dfrac{\partial}{\partial  v_2}\{v_0,v_1\}  = 0,\\
 &\dfrac{\partial}{\partial  v_0}\{v_1,v_2\}  =  0,~~~~~~   \dfrac{\partial}{\partial  v_1}\{v_1,v_2\}  =   \{v_2\}, 
 ~~~~~~ \dfrac{\partial}{\partial  v_2}\{v_1,v_2\}  = -\{v_1\},\\
 &\dfrac{\partial}{\partial  v_0}\{v_0,v_2\}  =  \{v_2\},~~~~~~   \dfrac{\partial}{\partial  v_1}\{v_0,v_2\}  = 0, 
 ~~~~~~ \dfrac{\partial}{\partial  v_2}\{v_0,v_2\}  = -\{v_0\}. 
 \end{eqnarray*}
Thus 
 \begin{eqnarray*}
& H_0(\mathcal{K}_1, \dfrac{\partial}{\partial  v_0})=  R(\{v_0\}, \{v_1\}, \{v_2\})  /R(\{v_1\}, \{v_2\})=  R(\{v_0\})\cong  R,  \\
& H_0(\mathcal{K}_1, \dfrac{\partial}{\partial  v_1})=  H_0(\mathcal{K}_1, \dfrac{\partial}{\partial  v_2})= R(\{v_0\}, \{v_1\}, \{v_2\})  /R(\{v_0\} )=  R(\{v_1\},  \{v_2\})\cong  R^{\oplus  2},\\
& H_0(\mathcal{K}_2, \dfrac{\partial}{\partial  v_1})=  R(\{v_0\}, \{v_1\}, \{v_2\})  /R(\{v_0\}, \{v_2\})=  R(\{v_1\})\cong  R,\\
& H_0(\mathcal{K}_2, \dfrac{\partial}{\partial  v_0})=H_0(\mathcal{K}_2, \dfrac{\partial}{\partial  v_2})=  R(\{v_0\}, \{v_1\}, \{v_2\})  / R(\{v_1\} )=  R(\{v_0\},  \{v_2\} )\cong  R^{\oplus  2}  
 \end{eqnarray*}
 and 
 \begin{eqnarray*}
 H_1(\mathcal{K}_i,   \dfrac{\partial}{\partial  v_j})=0,  ~~~~~~  i=1,2,  ~~~ j=0,1,2.  
 \end{eqnarray*}
 Moreover,  
 \begin{eqnarray*}
 \mathcal{K}_1\cap\mathcal{K}_2= \{\{v_0,v_1\},    \{v_0\},  \{v_1\},  \{v_2\}\}.
 \end{eqnarray*}
  Thus  
   \begin{eqnarray*}
& H_0(\mathcal{K}_1\cap\mathcal{K}_2, \dfrac{\partial}{\partial  v_0})=  R(\{v_0\}, \{v_1\}, \{v_2\})  /R(\{v_1\})=  R(\{v_0\}, \{v_2\})\cong  R^{\oplus  2},  \\
& H_0(\mathcal{K}_1\cap\mathcal{K}_2, \dfrac{\partial}{\partial  v_1})=   R(\{v_0\}, \{v_1\}, \{v_2\})  /R(\{v_0\} )=  R(\{v_1\},  \{v_2\})\cong  R^{\oplus  2},\\
& H_0(\mathcal{K}_1\cap\mathcal{K}_2, \dfrac{\partial}{\partial  v_2})=   R(\{v_0\}, \{v_1\}, \{v_2\})  \cong  R^{\oplus  3}
 \end{eqnarray*}
 and 
  \begin{eqnarray*}
 H_1(\mathcal{K}_1\cap\mathcal{K}_2,   \dfrac{\partial}{\partial  v_j})=0,  ~~~~~~   j=0,1,2.  
 \end{eqnarray*}
 Furthermore,    
 \begin{eqnarray*}
 \mathcal{K}_2\cup\mathcal{K}_2= \{\{v_0,v_1\},  \{v_1,v_2\}, \{v_0,v_2\}, \{v_0\},  \{v_1\},  \{v_2\}\}. 
 \end{eqnarray*}
 Thus  
   \begin{eqnarray*}
& H_0(\mathcal{K}_1\cup\mathcal{K}_2, \dfrac{\partial}{\partial  v_0})=  R(\{v_0\}, \{v_1\}, \{v_2\})  /R(\{v_1\}, \{v_2\})=  R(\{v_0\} )\cong  R,  \\
& H_0(\mathcal{K}_1\cup\mathcal{K}_2, \dfrac{\partial}{\partial  v_1})=  R(\{v_0\}, \{v_1\}, \{v_2\})  /R(\{v_0\}, \{v_2\})=  R(\{v_1\} )\cong  R,  \\
& H_0(\mathcal{K}_1\cup\mathcal{K}_2, \dfrac{\partial}{\partial  v_2})=  R(\{v_0\}, \{v_1\}, \{v_2\})  /R(\{v_0\}, \{v_1\})=  R(\{v_2\} )\cong  R  
 \end{eqnarray*}
 and 
  \begin{eqnarray*}
 H_1(\mathcal{K}_1\cup\mathcal{K}_2,   \dfrac{\partial}{\partial  v_j})=0,  ~~~~~~   j=0,1,2.  
 \end{eqnarray*}
 Note that the  usual  simplicial  homology  of  $\mathcal{K}_1\cup\mathcal{K}_2$  is 
   \begin{eqnarray*}
 H_1(\mathcal{K}_1\cup\mathcal{K}_2,   \sum_{j=0}^2\dfrac{\partial}{\partial  v_j})=  R (\{v_0,v_1\}-\{v_0,v_2\} +\{v_1,v_2\})\cong   R.   
 \end{eqnarray*}
  \item
  Take  the  independence  hypergraphs
 \begin{eqnarray*}
 \mathcal{L}_1&=& \{\{v_0,v_1,v_2\},  \{v_0,v_2\}, \{v_0,v_1\},  \{v_0\}     \},  \\
 \mathcal{L}_2&=& \{\{v_0,v_1,v_2\},    \{v_0,v_2\},  \{v_1,v_2\},    \{v_2\}\}. 
 \end{eqnarray*}
By  a  direct  calculation, 
 \begin{eqnarray*}
 &d v_0 \{v_0 \}  =  0,~~~~~~   d v_0 \{v_2 \}  =   \{v_0,v_2\}, ~~~~~~d v_1 \{v_0 \}  =  -\{v_0,v_1\}, \\
& d v_1 \{v_2 \}  =   \{v_1,v_2\},~~~~~~ d v_2 \{v_0 \}  =  -\{v_0,v_2\},~~~~~~  dv_2  \{v_2\}  =0, \\
 & dv_0  \{v_0,v_2\} =  dv_2  \{v_0,v_2\}  =  dv_0 \{v_0,v_1\}  =  dv_1\{v_0,v_1\}  =  dv_1 \{v_1,v_2\}  = dv_2\{v_1,v_2\} =0, \\
 &  dv_0 \{v_1,v_2\}  = -  dv_1 \{v_0,v_2\}  =  dv_2\{v_0,v_1\} = \{v_0,v_1,v_2\}. 
 \end{eqnarray*}
 Thus 
  \begin{eqnarray*}
& H^0(\mathcal{L}_1,  d  v_0 )=   R(\{v_0\})\cong  R,  \\
& H^0(\mathcal{L}_1,  d  v_1 )= H^0(\mathcal{L}_1,  d  v_2 )=  0,  \\
& H^0(\mathcal{L}_2,  d  v_2 )=   R(\{v_2\})\cong  R,  \\
& H^0(\mathcal{L}_2,  d  v_0 )=H^0(\mathcal{L}_2,  d  v_1 )=0, 
\end{eqnarray*}  
  \begin{eqnarray*}
& H^1(\mathcal{L}_1,  d  v_0 )=   R(\{v_0,v_2\}, \{v_0,v_1\}) \cong  R^{\oplus  2},  \\
& H^1(\mathcal{L}_1,  d  v_1 )= R(  \{v_0,v_1\})/ R(  \{v_0,v_1\})  =0,  \\
& H^1(\mathcal{L}_1,  d  v_2 )= R(  \{v_0,v_2\})/ R(  \{v_0,v_2\})  =0,  \\
& H^1(\mathcal{L}_2,  d  v_2 )=R(\{v_0,v_2\}, \{v_1,v_2\}) \cong  R^{\oplus  2},\\
& H^1(\mathcal{L}_2,  d  v_0 )=R(  \{v_0,v_2\})/  R(\{v_0,v_2\})=0,  \\
& H^1(\mathcal{L}_2,  d  v_1 )=  R(  \{v_1,v_2\})/  R(\{v_1,v_2\})=0  
\end{eqnarray*}
and 
\begin{eqnarray*}
& H^2(\mathcal{L}_1,  d  v_0 )=   R(\{v_0,v_1,v_2\})  \cong  R,  \\
& H^2(\mathcal{L}_1,  d  v_1 )=  H^2(\mathcal{L}_1,  d  v_2 )= R(  \{v_0,v_1,v_2\})/ R(  \{v_0,v_1,v_2\})  =0,  \\
& H^2(\mathcal{L}_2,  d  v_2 )=R(\{v_0,v_1,v_2\})  \cong  R,\\
& H^2(\mathcal{L}_2,  d  v_0 )= H^2(\mathcal{L}_2,  d  v_1 )=  R(  \{v_0,v_1,v_2\})/ R(  \{v_0,v_1,v_2\})  =0. 
\end{eqnarray*}
Moreover,  
\begin{eqnarray*}
\mathcal{L}_1\cap\mathcal{L}_2 = \{\{v_0,v_1,v_2\},  \{v_0,v_2\}\}. 
\end{eqnarray*}
Thus 
\begin{eqnarray*}
&  H^0(\mathcal{L}_1\cap\mathcal{L}_2,  d  v_j)=   0, ~~~~~~  j=0,1,2,  \\
&  H^1(\mathcal{L}_1\cap\mathcal{L}_2,  d  v_0 )=     H^1(\mathcal{L}_1\cap\mathcal{L}_2,  d  v_2 )=R( \{v_0,v_2\})\cong   R, \\
&H^1(\mathcal{L}_1\cap\mathcal{L}_2,  d  v_1 )=0,\\
&H^2(\mathcal{L}_1\cap\mathcal{L}_2,  d  v_1 )=R(\{v_0,v_1,v_2\})/R(\{v_0,v_1,v_2\})=0,\\
&H^2(\mathcal{L}_1\cap\mathcal{L}_2,  d  v_0 )=H^2(\mathcal{L}_1\cap\mathcal{L}_2,  d  v_2 )= R(\{v_0,v_1,v_2\})\cong  R.  
\end{eqnarray*}
Furthermore,
\begin{eqnarray*}
\mathcal{L}_1\cup\mathcal{L}_2 = \{\{v_0,v_1,v_2\},  \{v_0,v_1\},  \{v_0,v_2\}, \{v_1,v_2\}, \{v_0\}, \{v_2\}\}. 
\end{eqnarray*}
Thus  
\begin{eqnarray*}
&  H^0(\mathcal{L}_1\cup\mathcal{L}_2,  d  v_0)=R(\{v_0\})\cong  R,\\
&  H^0(\mathcal{L}_1\cup\mathcal{L}_2,  d  v_1)=0,\\
&  H^0(\mathcal{L}_1\cup\mathcal{L}_2,  d  v_2)=R(\{v_2\})\cong  R,
\end{eqnarray*}
\begin{eqnarray*}
&  H^1(\mathcal{L}_1\cup\mathcal{L}_2,  d  v_0)=R(\{v_0,v_1\}, \{v_0,v_2\})/ R(\{v_0,v_2\})  \cong  R,\\
&  H^1(\mathcal{L}_1\cup\mathcal{L}_2,  d  v_1)=R(\{v_0,v_1\}, \{v_1,v_2\})/ R(\{v_0,v_1\}, \{v_1,v_2\})=0,\\
&  H^1(\mathcal{L}_1\cup\mathcal{L}_2,  d  v_2)=R(\{v_0,v_2\}, \{v_1,v_2\})/ R(\{v_0,v_2\})\cong  R\end{eqnarray*}
and 
\begin{eqnarray*}
&  H^2(\mathcal{L}_1\cup\mathcal{L}_2,  d  v_0)=   H^2(\mathcal{L}_1\cup\mathcal{L}_2,  d  v_1)=  H^2(\mathcal{L}_1\cup\mathcal{L}_2,  d  v_2)\\
&=R(\{v_0,v_1,v_2\})/ R(\{v_0,v_1,v_2\})=0.
\end{eqnarray*}
\item
 Take  the  hypergraphs  
 \begin{eqnarray*}
 \mathcal{H}_1 &=& \{\{v_0,v_1\}, \{v_0,v_2\}, \{v_0\}, \{v_1\}\},\\
 \mathcal{H}_2 &=&  \{\{v_0,v_1,v_2\}, \{v_0,v_1\}, \{v_2\}\}. 
 \end{eqnarray*}
 Then  
 \begin{eqnarray*}
 \Delta\mathcal{H}_1 & = & \{\{v_0,v_1\}, \{v_0,v_2\}, \{v_0\}, \{v_1\}, \{v_2\}\},\\
  \delta\mathcal{H}_1 & = &  \{\{v_0,v_1\},  \{v_0\}, \{v_1\}  \},\\
  \bar\Delta\mathcal{H}_1 & = & \{\{v_0,v_1,v_2\},\{v_0,v_1\}, \{v_0,v_2\}, \{v_1,v_2\}, \{v_0\}, \{v_1\} \},\\
    \bar\delta\mathcal{H}_1 & = & \emptyset   
 \end{eqnarray*}
 and  
  \begin{eqnarray*}
 \Delta\mathcal{H}_2 & = & \{\{v_0,v_1,v_2\}, \{v_0,v_1\}, \{v_0,v_2\},\{v_1,v_2\},  \{v_0\}, \{v_1\}, \{v_2\}\},\\
  \delta\mathcal{H}_2 & = &  \{  \{v_2\}  \},\\
  \bar\Delta\mathcal{H}_2 & = & \{\{v_0,v_1,v_2\},\{v_0,v_1\}, \{v_0, v_2\},  \{v_1,v_2\},  \{v_2\}  \},\\
    \bar\delta\mathcal{H}_2 & = & \{\{v_0,v_1,v_2\},\{v_0,v_1\} \}.   
 \end{eqnarray*}
 The  constrained  homology  of  $ \delta\mathcal{H}_i$,   $\Delta\mathcal{H}_i$,  $i=1,2$,  
 $\delta\mathcal{H}_1\cap\delta\mathcal{H}_2$  and   $\Delta\mathcal{H}_1\cap\Delta\mathcal{H}_2$   can  be  
 calculated  analogous  to  (1).   The  constrained  cohomology  of  $\bar \delta\mathcal{H}_i$,    $\bar\Delta\mathcal{H}_i$,  $i=1,2$,  $\bar\delta\mathcal{H}_1\cap\bar\delta\mathcal{H}_2$  and   $\bar\Delta\mathcal{H}_1\cap\bar\Delta\mathcal{H}_2$  can  be  
 calculated  analogous  to  (2).  
  \end{enumerate}
 \end{example}
 
    \begin{example}
 Let  $V=\mathbb{Z}$.  
 \begin{enumerate}[(1).]
 \item
 Take  the   hypergraph 
 \begin{eqnarray*}
 \mathcal{H}= \{\{v_i\}\mid  i\in\mathbb{Z}\}\cup   \{\{v_i,v_{i+1},v_{i+2}\}\mid  i\in \mathbb{Z}\}. 
 \end{eqnarray*}
 Then   the  lower-associated  simplicial  complex  is
 \begin{eqnarray*}
  \delta\mathcal{H} = \{\{v_i\}\mid  i\in\mathbb{Z}\},
  \end{eqnarray*}
  the  associated  simplicial  complex  is  
  \begin{eqnarray*} 
   \Delta\mathcal{H} = \{\{v_i\}\mid  i\in\mathbb{Z}\}\cup \{\{v_i,v_{i+1}\},\{v_i,v_{i+2}\}\mid  i\in\mathbb{Z}\}\cup   \{\{v_i,v_{i+1},v_{i+2}\}\mid  i\in \mathbb{Z}\}, 
   \end{eqnarray*}
   the  lower-associated  independence  hypergraph  is 
   \begin{eqnarray*}
\bar\delta\mathcal{H} = \emptyset, 
\end{eqnarray*}
and  the  associated  independence  hypergraph  is 
\begin{eqnarray*}
\bar\Delta\mathcal{H} =   \Delta[\mathbb{Z}] =\{ \sigma\subset \mathbb{Z} \mid  \sigma\neq\emptyset{\rm~and~} \sigma{\rm~is~finite}\}.  
 \end{eqnarray*}
 \item
 Take  the  hypergraph 
 \begin{eqnarray*}
 \mathcal{H} = \{ \sigma\subset \mathbb{Z} \mid   a\leq  |\sigma| \leq   b{\rm~and~} \sigma{\rm~is~finite}\} 
 \end{eqnarray*}
 where  $1\leq  a\leq  b\leq  +\infty$.  
 Then  the  lower-associated  simplicial  complex  is
 \begin{eqnarray*}
  \delta\mathcal{H} = \begin{cases}
 \emptyset,  &{\rm~if~} a>1,\\
 \mathcal{H},  &  {\rm~if~} a=1, 
 \end{cases} 
 \end{eqnarray*}
  the  associated  simplicial  complex  is  
  \begin{eqnarray*}
  \Delta\mathcal{H} = \{ \sigma\subset \mathbb{Z} \mid    |\sigma|  \leq  b,   \sigma\neq \emptyset  {\rm~and~} \sigma{\rm~is~finite}\},
  \end{eqnarray*} 
   the  lower-associated  independence  hypergraph  is 
  \begin{eqnarray*}
 \bar\delta\mathcal{H} =
\begin{cases}
 \emptyset,  &{\rm~if~}  b<+\infty,\\
  \{ \sigma\subset \mathbb{Z} \mid   a\leq  |\sigma|  {\rm~and~} \sigma{\rm~is~finite}\},  &  {\rm~if~} b=+\infty, 
 \end{cases}
 \end{eqnarray*} 
  and  the  associated  independence  hypergraph  is 
  \begin{eqnarray*}
 \bar\Delta\mathcal{H} =  \{ \sigma\subset \mathbb{Z} \mid   a\leq  |\sigma|  {\rm~and~} \sigma{\rm~is~finite}\}.  
 \end{eqnarray*}
 \end{enumerate}
 We  refer  to  Example~\ref{ex-8.778}~(1)  for   the  calculation   of  the  constrained  homology  of  $\delta\mathcal{H}$  as  well as   the  constrained  homology  of   $\Delta\mathcal{H}$  and   refer  to  Example~\ref{ex-8.778}~(2)   for   the  calculation    of   the     constrained  cohomology  of  $\bar\delta\mathcal{H}$  as  well as  the  constrained  cohomology  of    $\bar\Delta\mathcal{H}$.   
 \end{example}

\section{Mayer-Vietoris  Sequences  for  the  Constrained Persistent (co)Homology  of  Hypergraphs}
  \label{s88908}

Let  $\{\mathcal{H}_x\}_{x\in\mathbb{R}}$  be  a  filtration  of  hypergraphs     with  vertices  from   $V$.   Then  
 (i).   $\mathcal{H}_x$  is  a  hypergraph  with its  vertices from  $V$  for  any   $x\in  \mathbb{R}$      and  
  (ii).  there  is  a  canonical  inclusion  $\iota_x^y:   \mathcal{H}_x\longrightarrow \mathcal{H}_y$  of  hypergraphs  induced  by  the  identity  map  on  $V$   
   for  any  $-\infty<x\leq  y<+\infty$  such  that   $\iota_x^x= {\rm  id}$  and   $\iota_x^z  =  \iota_y^z \circ  \iota_x^y$  for  any   $-\infty<x\leq  y\leq  z<+\infty$.

   Recall that a persistent  $R$-module is a family  $V = \{V_x\}_{x\in  \mathbb{R}} $   of  $R$-modules, together with a family of homomorphisms  $h_x^y:  V_x  \longrightarrow  V_y $   for any   $-\infty<x\leq   y<+\infty$,  such  that     $h_x^x ={\rm  id}$  for  any  $x\in \mathbb{R}$  and  $ h_x^z=h_y^z \circ  h_x^y  $     for  any  $-\infty<x\leq  y\leq  z<+\infty$    (cf. \cite[Definition 2.1]{pd2}).   By  applying the  usual  homology  functor,  a   filtration  of   simplicial  complexes  $\{\mathcal{K}_x\}_{x\in \mathbb{R}}$  will  induce  a persistent $R$-module  
   $\{H_n(\mathcal{K}_x)\}_{x\in\mathbb{R}}$  for  any  $n\in \mathbb{N}$,  which  is  called  the  $n$-th  (or  the  $n$-dimensional)  persistent  homology    of  $\{\mathcal{K}_x\}_{x\in \mathbb{R}}$  (cf.  \cite{pd2,pmd,pd1}).

   Taking  the  associated  simplicial  complexes  and  lower-associated  simplicial  complexes
    for  the  filtration  of  hypergraphs  $\{\mathcal{H}_x\}_{x\in\mathbb{R}}$,  with  the  help  of  
     (\ref{eq-vmbg1}),   we   have   
   a  commutative  diagram  
   \begin{eqnarray*} 
  \xymatrix{
  \delta(\mathcal{H}_x) \ar[r]^-{\delta(\iota_x^y)} \ar[d]   & \delta(\mathcal{H}_y ) \ar[d] \\
  \mathcal{H}_x\ar[r]^-{\iota_x^y} \ar[d] &\mathcal{H}_y  \ar[d] \\
    \Delta(\mathcal{H}_x )\ar[r]^-{\Delta(\iota_x^y)}    & \Delta(\mathcal{H}_y  )
  }
  \end{eqnarray*}
for  any  $-\infty<x\leq  y<+\infty$.   Consequently,    both   $\{\delta(\mathcal{H}_x)\}_{x\in\mathbb{R}}$   equipped  with 
$\{\delta(\iota_x^y)\}_{-\infty<x\leq  y<+\infty}$  and      $\{\Delta(\mathcal{H}_x)\}_{x\in\mathbb{R}}$   equipped  with 
$\{\Delta(\iota_x^y)\}_{-\infty<x\leq  y<+\infty}$   
are  filtrations  of  simplicial  complexes.   
 Similarly,   taking  the  associated  independence  hypergraphs    and  lower-associated   independence  hypergraphs    for  the  filtration  of  hypergraphs  $\{\mathcal{H}_x\}_{x\in\mathbb{R}}$,  with  the  help  of  
     (\ref{eq-vmbg2}),   we   have   
   a  commutative  diagram  
   \begin{eqnarray*} 
  \xymatrix{
 \bar \delta(\mathcal{H}_x) \ar[r]^-{\bar\delta(\iota_x^y)} \ar[d]   & \bar\delta(\mathcal{H}_y ) \ar[d] \\
  \mathcal{H}_x\ar[r]^-{\iota_x^y} \ar[d] &\mathcal{H}_y  \ar[d] \\
   \bar \Delta(\mathcal{H}_x )\ar[r]^-{\bar\Delta(\iota_x^y)}    & \bar\Delta(\mathcal{H}_y  )
  }
  \end{eqnarray*}
for  any  $-\infty<x\leq  y<+\infty$.   Consequently,    both   $\{\bar\delta(\mathcal{H}_x)\}_{x\in\mathbb{R}}$   equipped  with 
$\{\bar\delta(\iota_x^y)\}_{-\infty<x\leq  y<+\infty}$  and      $\{\bar\Delta(\mathcal{H}_x)\}_{x\in\mathbb{R}}$   equipped  with 
$\{\bar\Delta(\iota_x^y)\}_{-\infty<x\leq  y<+\infty}$   
are  filtrations  of  independence  hypergraphs.

  Let     $\{\mathcal{H}_x\}_{x\in\mathbb{R}}$  and  $\{\mathcal{H}'_x\}_{x\in\mathbb{R}}$  
          be  two  filtrations  of  hypergraphs  with  their  vertices  from  $V$.  
            For  any  $-\infty<x\leq  y<+\infty$,  the  canonical  inclusions  
            $\iota_x^y: \mathcal{H}_x\longrightarrow  \mathcal{H}_y$  and 
              ${\iota'}_x^y: \mathcal{H}'_x\longrightarrow  \mathcal{H}'_y$  
              induce   
                a  morphism  of  the Mayer-Vietoris 
            sequences   of the constrained  homology  (cf.  \cite[Section~5]{mv})
            \begin{eqnarray}\label{eq-1oiu9801}
            (\delta(\iota_x^y), \delta({\iota'}_x^y))_*: ~~~  {\bf  MV}_*(\delta(\mathcal{H}_x),\delta(\mathcal{H}'_x),  \alpha,m)\longrightarrow {\bf  MV}_*(\delta(\mathcal{H}_y),\delta(\mathcal{H}'_y),\alpha,m)  
            \end{eqnarray}
as  well as  
              a  morphism  of  the Mayer-Vietoris 
            sequences   of the constrained  homology   (cf.  \cite[Section~5]{mv}) 
            \begin{eqnarray}\label{eq-1oiu9802}
            (\Delta(\iota_x^y), \Delta({\iota'}_x^y))_*: ~~~  {\bf  MV}_*(\Delta(\mathcal{H}_x),\Delta(\mathcal{H}'_x),  \alpha,m)\longrightarrow {\bf  MV}_*(\Delta(\mathcal{H}_y),\Delta(\mathcal{H}'_y),\alpha,m). 
            \end{eqnarray}
            The  following  diagram   commutes  
            \begin{eqnarray*} 
            \xymatrix{
            {\bf  MV}_*(\delta(\mathcal{H}_x),\delta(\mathcal{H}'_x),  \alpha,m)\ar[rr]^-{(\delta(\iota_x^y), \delta({\iota'}_x^y))_*}\ar[d]
            && {\bf  MV}_*(\delta(\mathcal{H}_y),\delta(\mathcal{H}'_y),\alpha,m)\ar[d]\\
              {\bf  MV}_*(\Delta(\mathcal{H}_x),\Delta(\mathcal{H}'_x),  \alpha,m)\ar[rr]^-{(\Delta(\iota_x^y), \Delta({\iota'}_x^y))_*}
            && {\bf  MV}_*(\Delta(\mathcal{H}_y),\Delta(\mathcal{H}'_y),\alpha,m)
            }
            \end{eqnarray*}
            where  the  vertical  maps  are  morphisms  of  Mayer-Vietoris  sequences   induced  by  the  canonical  inclusions  of  $\delta(\mathcal{H}_x)$  into  
            $\Delta(\mathcal{H}_x)$  and  the  canonical  inclusions  of  $\delta(\mathcal{H}'_x)$  into  
            $\Delta(\mathcal{H}'_x)$,  $x\in \mathbb{R}$.  
            Similarly,  
            for  any  $-\infty<x\leq  y<+\infty$,  the  canonical  inclusions  
            $\iota_x^y: \mathcal{H}_x\longrightarrow  \mathcal{H}_y$  and 
              $\iota'^y_x: \mathcal{H}'_x\longrightarrow  \mathcal{H}'_y$  
              induce   
                a  morphism  of  the Mayer-Vietoris 
            sequences   of the constrained  cohomology  
            \begin{eqnarray}\label{eq-1oiu9805}
            ({\bar\delta}(\iota_x^y), {\bar\delta}(\iota'^y_x))_*: ~~~  {\bf  MV}^*({\bar\delta}(\mathcal{H}_x),{\bar\delta}(\mathcal{H}'_x),  \omega,m)\longrightarrow {\bf  MV}^*({\bar\delta}(\mathcal{H}_y),{\bar\delta}(\mathcal{H}'_y),\omega,m)  
            \end{eqnarray}
as  well as  
              a  morphism  of  the Mayer-Vietoris 
            sequences   of the constrained  cohomology  
            \begin{eqnarray}\label{eq-1oiu9806}
            ({\bar\Delta}(\iota_x^y), {\bar\Delta}(\iota'^y_x))_*: ~~~  {\bf  MV}^*({\bar\Delta}(\mathcal{H}_x),{\bar\Delta}(\mathcal{H}'_x),  \omega,m)\longrightarrow {\bf  MV}^*({\bar\Delta}(\mathcal{H}_y),{\bar\Delta}(\mathcal{H}'_y),\omega,m). 
            \end{eqnarray}
            The  following  diagram   commutes  
            \begin{eqnarray*} 
            \xymatrix{
            {\bf  MV}^*(\bar\delta(\mathcal{H}_x),\bar\delta(\mathcal{H}'_x),  \omega,m)\ar[rr]^-{(\bar\delta(\iota_x^y), \bar\delta({\iota'}_x^y))_*}\ar[d]
            && {\bf  MV}^*(\bar\delta(\mathcal{H}_y),\bar\delta(\mathcal{H}'_y),\alpha,m)\ar[d]\\
              {\bf  MV}^*(\bar\Delta(\mathcal{H}_x),\bar\Delta(\mathcal{H}'_x),  \alpha,m)\ar[rr]^-{(\bar\Delta(\iota_x^y), \bar\Delta({\iota'}_x^y))_*}
            && {\bf  MV}^*(\bar\Delta(\mathcal{H}_y),\bar\Delta(\mathcal{H}'_y),\alpha,m)
            }
            \end{eqnarray*}
            where  the  vertical  maps  are  morphisms  of  Mayer-Vietoris  sequences   induced  by  the  canonical  inclusions  of  $\bar\delta(\mathcal{H}_x)$  into  
            $\bar\Delta(\mathcal{H}_x)$  and  the  canonical  inclusions  of  $\bar\delta(\mathcal{H}'_x)$  into  
            $\bar\Delta(\mathcal{H}'_x)$,  $x\in \mathbb{R}$.

            We  call the  families  $\{{\bf  MV}_*(\delta(\mathcal{H}_x),\delta(\mathcal{H}'_x),\alpha,m)\}_{x\in\mathbb{R}}$   and  $\{{\bf  MV}_*(\Delta(\mathcal{H}_x),\Delta(\mathcal{H}'_x),\alpha,m)\}_{x\in\mathbb{R}}$ together with the  families  of  morphisms     given by  (\ref{eq-1oiu9801})  and  (\ref{eq-1oiu9802})   the  
            {\it    persistent  Mayer-Vietoris  sequence}  of  the  constrained  persistent homology  for the filtrations 
          of  hypergraphs   $\{\mathcal{H}_x\}_{x\in\mathbb{R}}$ and  $\{\mathcal{H}'_x\}_{x\in\mathbb{R}}$.   We    denote this persistent  Mayer-Vietoris  sequence  as  
            \begin{eqnarray}\label{eofm1}
            {\bf   PMV}_*(\mathcal{H}_x,\mathcal{H}'_x,  \alpha,m\mid  x\in\mathbb{R}).  
            \end{eqnarray}            
           We  call the  families  $\{{\bf  MV}^*(\bar\delta(\mathcal{H}_x),\bar\delta(\mathcal{H}'_x),\omega,m)\}_{x\in\mathbb{R}}$   and  $\{{\bf  MV}^*(\bar\Delta(\mathcal{H}_x),\bar\Delta(\mathcal{H}'_x),\omega,m)\}_{x\in\mathbb{R}}$ together with the  families  of  morphisms     given by  (\ref{eq-1oiu9805})  and  (\ref{eq-1oiu9806})   the  
            {\it    persistent  Mayer-Vietoris  sequence}  of  the  constrained  persistent  cohomology  for the filtrations 
           of  hypergraphs   $\{\mathcal{H}_x\}_{x\in\mathbb{R}}$  and  $\{\mathcal{H}'_x\}_{x\in\mathbb{R}}$.   We    denote this persistent  Mayer-Vietoris  sequence  as  
            \begin{eqnarray}\label{eofm2}
            {\bf   PMV}^*(\mathcal{H}_x,\mathcal{H}'_x,  \omega,m\mid  x\in\mathbb{R}).  
            \end{eqnarray} 
  We  prove  the  following  theorem  for    the   Mayer-Vietoris  sequences (\ref{eofm1})  and   (\ref{eofm2}).

  \begin{theorem}[Main  Result  III]
  \label{th-091286}
   For  any   filtrations   of  hypergraphs    $\{\mathcal{H}_x\}_{x\in\mathbb{R}}$  and  $\{\mathcal{H}'_x\}_{x\in\mathbb{R}}$  
               with  their  vertices  from  $V$,   
    \begin{enumerate}[(1).]
    \item
    we  have  an  induced  morphism  
\begin{eqnarray}\label{eq-5.2vla1ppp}
\beta_*: ~~~             {\bf   PMV}_*(\mathcal{H}_x,\mathcal{H}'_x,  \alpha,m\mid  x\in\mathbb{R})   
\longrightarrow              {\bf   PMV}_*(\mathcal{H}_x,\mathcal{H}'_x,  \alpha,m-2s\mid  x\in\mathbb{R}).  
\end{eqnarray}
 Moreover,   $(\beta_1\wedge\beta_2)_*= (\beta_1)_* (\beta_2)_*= (\beta_2)_* (\beta_1)_*$,     i.e.  the  diagram  commutes 
 \begin{eqnarray}\label{eq-5.2vlb1ppp}
 \xymatrix{
            {\bf   PMV}_*(\mathcal{H}_x,\mathcal{H}'_x,  \alpha,m\mid  x\in\mathbb{R})   
 \ar[r]^-{(\beta_1)_*} \ar[rdd]^-{(\beta_1\wedge \beta_2)_*}
 \ar[dd]_-{(\beta_2)_*}  & {\bf   PMV}_*(\mathcal{H}_x,\mathcal{H}'_x,  \alpha,m-2s_1\mid  x\in\mathbb{R})\ar[dd]^-{(\beta_2)_*}\\
 \\
{\bf   PMV}_*(\mathcal{H}_x,\mathcal{H}'_x,  \alpha,m-2s_2\mid  x\in\mathbb{R})\ar[r]^-{(\beta_1)_*}& {\bf   PMV}_*(\mathcal{H}_x,\mathcal{H}'_x,  \alpha,m-2s_1-2s_2\mid  x\in\mathbb{R});   
 }
 \end{eqnarray}

    \item
      we  have  an  induced  morphism  
\begin{eqnarray}\label{eq-5.2vla2ppp}
\mu_*: ~~~ {\bf   PMV}^*(\mathcal{H}_x,\mathcal{H}'_x,  \omega,m\mid  x\in\mathbb{R})\longrightarrow  {\bf   PMV}^*(\mathcal{H}_x,\mathcal{H}'_x,  \omega,m+2s\mid  x\in\mathbb{R}). 
\end{eqnarray}
Moreover,   $(\mu_1\wedge\mu_2)_*= (\mu_1)_* (\mu_2)_*= (\mu_2)_* (\mu_1)_*$,     i.e.  the  diagram  commutes 
 \begin{eqnarray}\label{eq-5.2vlb2ppp}
 \xymatrix{
{\bf   PMV}^*(\mathcal{H}_x,\mathcal{H}'_x,  \omega,m\mid  x\in\mathbb{R})\ar[r]^-{(\mu_1)_*} \ar[rdd]^-{(\mu_1\wedge \mu_2)_*}
 \ar[dd]_-{(\mu_2)_*}  &{\bf   PMV}^*(\mathcal{H}_x,\mathcal{H}'_x,  \omega,m+2s_1\mid  x\in\mathbb{R})\ar[dd]^-{(\mu_2)_*}\\
 \\
{\bf   PMV}^*(\mathcal{H}_x,\mathcal{H}'_x,  \omega,m+2s_2\mid  x\in\mathbb{R})\ar[r]^-{(\mu_1)_*}&{\bf   PMV}^*(\mathcal{H}_x,\mathcal{H}'_x,  \omega,m+2s_1+2s_2\mid  x\in\mathbb{R}).  
 }
 \end{eqnarray}
\end{enumerate}
  \end{theorem}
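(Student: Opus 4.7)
The plan is to derive both (1) and (2) by applying Theorem~\ref{pr-5.829a} pointwise at each filtration parameter $x\in\mathbb{R}$ and then assembling these pointwise morphisms using the functoriality of the construction with respect to the canonical inclusions $\iota_x^y$ and ${\iota'}_x^y$. The observation that makes the whole scheme go through is that every inclusion $\iota_x^y:\mathcal{H}_x\longrightarrow\mathcal{H}_y$ in the filtration is induced by the identity map on $V$, which is trivially bijective; hence the hypothesis ``induced by bijective maps between the vertices'' in Theorem~\ref{pr-5.829a} is automatic, and the functoriality clauses there apply directly to the filtration morphisms.

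For part (1), I would first fix $x\in\mathbb{R}$ and invoke Theorem~\ref{pr-5.829a}(1) to produce the morphism
\[
\beta_*:~~~{\bf MV}_*(\mathcal{H}_x,\mathcal{H}'_x,\alpha,m)\longrightarrow {\bf MV}_*(\mathcal{H}_x,\mathcal{H}'_x,\alpha,m-2s)
\]
together with the commutative triangle (\ref{eq-5.2vlb1}) encoding $(\beta_1\wedge\beta_2)_*=(\beta_1)_*(\beta_2)_*=(\beta_2)_*(\beta_1)_*$. Next, for any $-\infty<x\leq y<+\infty$, I would apply the functoriality clause of Theorem~\ref{pr-5.829a}(1) to the pair $(\iota_x^y,{\iota'}_x^y)$ to obtain a compatibility square relating $\beta_*$ at parameter $x$ to $\beta_*$ at parameter $y$ via the morphisms (\ref{eq-1oiu9801}) and (\ref{eq-1oiu9802}); these squares for varying $x\leq y$ are coherent because they are all induced by the identity on $V$ and satisfy the composition law $\iota_x^z=\iota_y^z\circ\iota_x^y$. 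Assembling them packages the family $\{\beta_*\}_{x\in\mathbb{R}}$ into a morphism of persistent Mayer-Vietoris sequences, which is exactly (\ref{eq-5.2vla1ppp}). The identity $(\beta_1\wedge\beta_2)_*=(\beta_1)_*(\beta_2)_*=(\beta_2)_*(\beta_1)_*$ in (\ref{eq-5.2vlb1ppp}) is inherited directly from its pointwise analogue in (\ref{eq-5.2vlb1}), since each of the three composites intertwines correctly with $\iota_x^y$ by the same functoriality applied iteratively.

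Part (2) follows by the parallel argument with Theorem~\ref{pr-5.829a}(2) in place of Theorem~\ref{pr-5.829a}(1), using $\mu$, $\omega$, and the degree shift $m+2s$ in place of $\beta$, $\alpha$, and $m-2s$. The role of (\ref{eq-1oiu9801})--(\ref{eq-1oiu9802}) is played by (\ref{eq-1oiu9805})--(\ref{eq-1oiu9806}), and the compatibility squares now involve the cochain-level functorialities of the constrained cohomology on the associated and lower-associated independence hypergraphs $\bar\Delta\mathcal{H}_x$ and $\bar\delta\mathcal{H}_x$.

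The main obstacle here is bookkeeping rather than mathematics. One must verify simultaneously that each pointwise morphism and pointwise commutative triangle from Theorem~\ref{pr-5.829a} is compatible with all four induced filtrations $\{\delta\mathcal{H}_x\}$, $\{\Delta\mathcal{H}_x\}$, $\{\bar\delta\mathcal{H}_x\}$, $\{\bar\Delta\mathcal{H}_x\}$ (and the corresponding intersection/union filtrations), and that the commutative diagrams (\ref{eq-5.98aq2}) and (\ref{eq-5.98aq5}) defining a morphism of hypergraph Mayer-Vietoris sequences are preserved under passage from parameter $x$ to $y$. Once this organization is laid out cleanly, no genuinely new exact-sequence computation is required: the theorem is a formal consequence of the pointwise Theorem~\ref{pr-5.829a} combined with the fact that $\mathrm{id}_V$ is a bijection.
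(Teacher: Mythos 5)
Your proposal is correct and follows essentially the same route as the paper: the paper likewise observes that the filtration inclusions are induced by the identity on $V$ (hence bijective), invokes the functoriality clauses of Theorem~\ref{pr-5.829a} for (\ref{eq-5.2vla1})--(\ref{eq-5.2vlb1}) and (\ref{eq-5.2vla2})--(\ref{eq-5.2vlb2}), and ``takes the persistence'' of those pointwise statements. Your write-up merely makes explicit the compatibility squares and coherence under $\iota_x^z=\iota_y^z\circ\iota_x^y$ that the paper leaves implicit.
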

  
  \begin{proof}
  (1).    By  Theorem~\ref{pr-5.829a}~(1),  both  (\ref{eq-5.2vla1})  and   (\ref{eq-5.2vlb1})  are 
    functorial  with  respect to  morphisms  of  hypergraphs   induced  by  bijective  maps  between  the  vertices.  
    Since  the  morphisms  in  the   filtration  are   induced  by  the  identity  map  on the  vertices,  
    we  can  take     the  persistence  of   (\ref{eq-5.2vla1})  and   (\ref{eq-5.2vlb1}).  
    We  obtain   (\ref{eq-5.2vla1ppp})  and   (\ref{eq-5.2vlb1ppp})   respectively.

  (2).  Analogous to  (1),  we  take    the  persistence  of   (\ref{eq-5.2vla2})  and   (\ref{eq-5.2vlb2}).  
    We  obtain   (\ref{eq-5.2vla2ppp})  and   (\ref{eq-5.2vlb2ppp})   respectively. 
  \end{proof}

 \section{Applications  to        Persistent  Homology  for  Large   Networks}\label{s5}
 
 We  discuss     some  applications   of  the  constrained  (co)homology  of  hypergraphs    to  the      persistent homology   of   large networks.      Throughout  this  section,  we  use  a  hypergraph  $\mathcal{H}$  to  represent  a  network.  
 
 \subsection{Hypergraph  representations   for   sparse  networks and  dense  networks}

Let  $V$  be  a  finite  set.  
Let  $\mathcal{H}$  be  a  hypergraph  with  its  vertices from  $V$.
Let  $n\in \mathbb{N}$.    
The  density  of  the  $n$-hyperedges  in  $\mathcal{H}$  can  be  measured  by  the  ratio 
\begin{eqnarray*}
m_n(\mathcal{H})= \frac{|\mathcal{H}_n|}{ |{ V \choose   {n+1}}| } 
\end{eqnarray*}
where  $\mathcal{H}_n$  is  the  sub-hypergraph  of  $\mathcal{H}$  consisting  of  the  $n$-hyperedges  
in  $\mathcal{H}$  and  $V \choose   {n+1}$   is  the  hypergraph  consisting   of  all the  possible  $n$-hyperedges 
with  vertices  from  $V$.  The  density  of  $\mathcal{H}$   can  be  measured  by  the  ratio 
\begin{eqnarray*}
m(\mathcal{H})= \frac{|\mathcal{H}|}{ |\Delta[V]| }.   
\end{eqnarray*}
Note  that  
\begin{eqnarray*}
|\mathcal{H}|= \sum_{n\geq  0}  |\mathcal{H}_n|,  ~~~~~~ |\Delta[V]|  =  \sum_{n\geq 0}   \Big|{ V \choose   {n+1}}\Big|.   
\end{eqnarray*}
We  say  that  $\mathcal{H}$  is  {\it  sparse}  if  $m(\mathcal{H})<<1$  and  say  that  
$\mathcal{H}$  is  {\it   dense}  if  $1- m(\mathcal{H}) <<1$.  
Similarly,    we  say  that  $\mathcal{H}$  is  {\it   $n$-sparse}  if  $m_n(\mathcal{H})<<1$  and  say  that  
$\mathcal{H}$  is  {\it   $n$-dense}  if  $1- m_n(\mathcal{H}) <<1$.

Let  $\mathcal{K}$  be  a  simplicial complex  with  vertices  from  $V$.  
Let  $\sigma\in \Delta[V]\setminus  \mathcal{K}$.  
We  say that  $\sigma$  is  an  {\it  external  face}  of  $\mathcal{K}$  if  
$\tau\in \mathcal{K}$  for any proper  subset  
$\tau$  of  $\sigma$.  
Let  $E(\mathcal{K})$  be  the set  of  all the  external faces of $\mathcal{K}$.   
Let  $\mathcal{L}$  be  an  independence  hypergraph   with  vertices  from  $V$.  
Let  $\sigma\in \Delta[V]\setminus  \mathcal{L}$.  
We  say that  $\sigma$  is  a   {\it  co-external  face}  of  $\mathcal{L}$   if  
$\tau\in \mathcal{L}$  for any proper  superset  
$\tau$  of  $\sigma$,  where $\tau$  is a subset of  $V$.  
Let  $\bar  E(\mathcal{L})$  be  the set  of  all the  co-external faces of   $\mathcal{L}$.   
   Let    $p:  \Delta[V]\longrightarrow  [0,1]$. 
 Consider  the  random  hypergraph  whose  probability   is  given by 
  \begin{eqnarray*} 
  \bar{\rm P}_{p}(\mathcal{H})=\prod_{\sigma\in\mathcal{H}}  p(\sigma)\prod_{\sigma\notin   \mathcal{H}}\big(1-p(\sigma)\big) 
  \end{eqnarray*}
  where  $\mathcal{H}$  runs  over  all  hypergraphs  with  vertices  from  $V$,  
    the       random  simplicial  complex  whose  probability   is  given by 
  \begin{eqnarray*} 
  {\rm P}_{p}(\mathcal{K})=\prod_{\sigma\in\mathcal{K}}  p(\sigma)\prod_{\sigma\in  E(\mathcal{K})}\big(1-p(\sigma)\big) 
  \end{eqnarray*}
    where  $\mathcal{K}$  runs  over  all  simplicial  complexes  with  vertices  from  $V$,   
 and  the      random  independence  hypergraph  whose  probability   is  given by 
    \begin{eqnarray*} 
{\rm Q}_{p}(\mathcal{L})=\prod_{\sigma\in\mathcal{L}}  p(\sigma)\prod_{\sigma\in  \bar  E(\mathcal{L})}\big(1-p(\sigma)\big)  
  \end{eqnarray*}
    where  $\mathcal{L}$  runs  over  all  independence  hypergraphs  with  vertices  from  $V$.   
    Let  $\mathcal{H}\sim  \bar{\rm P}_{p}$   be  a  randomly  generated  hypergraph.   Then 
 we  have  (cf.  \cite[Section~3]{f2022},   \cite[Theorem~1.5 (2)]{jktr2},  \cite[Theorem~1.1]{grh1}) 
 \begin{enumerate}[(1).]      
\item
the  lower-associated  simplicial  complex   of   $\mathcal{H}$   is   a  randomly generated  
simplicial  complex   $\delta\mathcal{H} \sim  {\rm  P}_{p}$, 
\item
the  lower-associated  independence  hypergraph   of   $\mathcal{H}$     is   a   randomly generated  
 independence  hypergraph   $ \bar\delta\mathcal{H}\sim  {\rm  Q}_{p}$,   
\item
the  complement   of   $\mathcal{H}$  is  a randomly  generated  hypergraph  $\gamma\mathcal{H}\sim  \bar{\rm  P}_{1-p}$,  
\item
the  complement  of  the  associated  simplicial  complex  of   $\mathcal{H}$   is  a  randomly  generated  
independence  hypergraph  $\gamma \Delta\mathcal{H} \sim  {\rm  Q}_{1-p}$,  
\item
the  complement  of  the  associated  independence  hypergraph   of   $\mathcal{H}$   is a randomly generated  
simplicial  complex    $\gamma \bar\Delta\mathcal{H} \sim  {\rm   P}_{1-p}$.  
\end{enumerate}

  We   expect  to   use  simplicial  complexes  as  models  to  represent   sparse  networks  and  use  independence  hypergraphs  as  models  
to  represent   dense  networks.  For example,  if  the  networks  are  randomly  generated  by  $\bar{\rm  P}_p$,  
then  we  expect the  followings:

\begin{enumerate}[(1).]
\item
Suppose  $\frac{p(\sigma)}{|\sigma|}<<  1$  for  $\sigma\in \Delta[V]$  such that      $\mathcal{H}$  is  sparse   and   the  $n$-sparsity    of  $\mathcal{H}$   is  increasing  as  $n$  grows.  
   We  use  the  lower-associated   simplicial  complex   $\delta\mathcal{H} \sim  {\rm  P}_{p}$ 
   as  well as   the  associated  simplicial  complex     $\Delta\mathcal{H}$  satisfying  $\gamma \Delta\mathcal{H} \sim  {\rm  Q}_{1-p}$   
   to   represent   $\mathcal{H}$;     
   
\item
Let   $1-\frac{p(\sigma)}{|\sigma|}<<  1$  for  $\sigma\in \Delta[V]$  such  that  $\mathcal{H}$  is  dense    and   the  $n$-density    of  $\mathcal{H}$   is  increasing  as  $n$  grows.   
We  use  
the   associated   independence  hypergraph   $\bar\Delta\mathcal{H}$  satisfying   $\gamma \bar\Delta\mathcal{H} \sim  {\rm   P}_{1-p}$    as   well  as   the  lower-associated  independence  hypergraph   $ \bar\delta\mathcal{H}\sim  {\rm  Q}_{p}$   to   
 represent    $\mathcal{H}$.  
 \end{enumerate}

   \subsection{ Localizations of  persistent homology}

   Let  $V_1,  V_2,\ldots,  V_k$  be  a  partition  of  $V$,  i.e.  $V$  is  the  disjoint  union  of  $V_1,  V_2,\ldots,  V_k$.   
   Let  $\mathcal{K}$   be  a  simplicial  complex  with  its  vertices  from  $V$.  
    For  each  $i=1,2,\ldots, k$,   we  define  the  {\it  localized  boundary  map}  with  respect  to  $V_i$  as  a  homomorphism  of  graded  $R$-modules   
      \begin{eqnarray*}
   \partial_n(w,  V_i):~~~   R_n(\mathcal{K})\longrightarrow   R_{n-1}(\mathcal{K})
   \end{eqnarray*}
   given  by  
  \begin{eqnarray*}
    \partial_n(w,  V_i)=\sum_{v\in V_i}w(v)\frac{\partial}{\partial v}.
  \end{eqnarray*}
  We  have 
    \begin{eqnarray}\label{eq-6.afv897eee}
\partial_n(w) =\sum_{i=1}^k  \partial_n(w,  V_i). 
\end{eqnarray}
Take  the  homology  group 
\begin{eqnarray}\label{eq-whm1bttd}
  H_n(\mathcal{K},w, V_i;R)= {\rm Ker}(\partial_n(w,V_i))/{\rm Im}(\partial_{n+1}(w,V_i)).  
\end{eqnarray}
We  call  (\ref{eq-6.afv897eee})   the  {\it  localization}  of  the  boundary  map $\partial_n(w)$  with  respect to  $V_i$   and  call   (\ref{eq-whm1bttd})  the  {\it  localized  homology}  of  $\mathcal{K}$  with  respect  to  $w$ and  $V_i$. 
It  is  clear  that  (\ref{eq-whm1bttd})  is  the   constrained  homology  with  respect to  $\partial_n(w,V_i)\in  {\rm  Ext}_1(V)$.    
Let  $\mathcal{L}$  be  an  independence   hypergraph  with  its  vertices  from  $V$.    For  each  $i=1,2,\ldots, k$,   we  define  the  {\it  localized  coboundary  map}  with  respect  to  $V_i$  as  a  homomorphism  of  graded  $R$-modules   
      \begin{eqnarray*}
   \partial^n(w,  V_i):~~~   R_n(\mathcal{L})\longrightarrow   R_{n+1}(\mathcal{L})
   \end{eqnarray*}
   given  by  
  \begin{eqnarray*}
    \partial^n(w,  V_i)=\sum_{v\in V_i}w(v)  dv.
  \end{eqnarray*}
  We  have 
    \begin{eqnarray}\label{eq-6.afv897fff}
\partial^n(w) =\sum_{i=1}^k  \partial^n(w,  V_i). 
\end{eqnarray}
Take  the  cohomology  group 
\begin{eqnarray}\label{eq-whm1affzq}
  H^n(\mathcal{L},w, V_i;R)= {\rm Ker}(\partial^n(w,V_i))/{\rm Im}(\partial^{n-1}(w,V_i)).  
\end{eqnarray}
We  call  (\ref{eq-6.afv897fff})   the  {\it  localization}  of  the  coboundary  map $\partial^n(w)$  with  respect to  $V_i$     and   call  (\ref{eq-whm1affzq})  the  {\it  localized  cohomology}  of  $\mathcal{L}$  with  respect  to  $w$ and  $V_i$. 
It  is  clear  that  (\ref{eq-whm1affzq})  is  the   constrained   cohomology  with  respect to  $\partial^n(w,V_i)\in  {\rm  Ext}^1(V)$.    
 Represented  as  matrices,  the  size  of  $\partial_n(w,  V_i)$ is   smaller  than the  size   of  $\partial_n(w)$   and  the  size  of  $\partial^n(w,  V_i)$ is   smaller  than the  size   of  $\partial^n(w)$.  
 In  particular,   take   $w(v)\equiv  1$.   The  complexities  for  the  computation of    the  persistent  (co)homology  (cf.  \cite{2005})  would  be  reduced  
 if  we  substitute  the  usual  (co)boundary  maps    with  the   localized  (co)boundnary  maps.

\subsection{Perisistent localized  (co)homology  for  hypergraphs}

 By the  functoriality  of   the  constrained  homology  of  simplicial  complexes,    the  localized  homology  groups 
 \begin{eqnarray*}
  H_n( \delta(\mathcal{H}_x),  w,   V_i;  R),  ~~~~~~  x\in  \mathbb{R} 
 \end{eqnarray*}
 together  with the  induced  homomorphisms  
 \begin{eqnarray*}
(\delta(\iota_x^y))_*:~~~  H_n( \delta(\mathcal{H}_x),  w,   V_i;  R)\longrightarrow    H_n( \delta(\mathcal{H}_y),  w,   V_i;  R),  ~~~~~~  -\infty<x\leq  y<+\infty, 
 \end{eqnarray*}
   gives  a   persistent  $R$-module.  We  call   this  persistent  $R$-module  the       {\it   localized  persistent  homology}   of  the  induced  filtration  $\{\delta(\mathcal{H}_x)\}_{x\in\mathbb{R}}$  of   the  lower-associated  simplicial  complexes  and    denote  it  as   
   \begin{eqnarray}\label{eq-8.bvcx81}
   {\bf  PH}_n( \delta(\mathcal{H}_x),  w,   V_i;  R\mid  x\in \mathbb{R}).
   \end{eqnarray}
   The  localized  homology  groups  
 \begin{eqnarray*}
  H_n( \Delta(\mathcal{H}_x),  w,   V_i;  R),  ~~~~~~  x\in  \mathbb{R} 
 \end{eqnarray*}
 together  with the  induced  homomorphisms  
 \begin{eqnarray*}
(\Delta(\iota_x^y))_*:~~~  H_n( \Delta(\mathcal{H}_x),  w,   V_i;  R)\longrightarrow    H_n( \Delta(\mathcal{H}_y),  w,   V_i;  R),  ~~~~~~  -\infty<x\leq  y<+\infty, 
 \end{eqnarray*}
   gives  a  persistent  $R$-module.   We  call  this  persistent   $R$-module  the  {\it    localized  persistent  homology}   of  the  induced  filtration   $\{\Delta(\mathcal{H}_x)\}_{x\in\mathbb{R}}$  of   the  associated  simplicial  complexes  and   denote  it  as    
   \begin{eqnarray}\label{eq-8.bvcx82}
   {\bf  PH}_n( \Delta(\mathcal{H}_x),  w,   V_i;  R\mid  x\in \mathbb{R}). 
   \end{eqnarray}
     For  $\{\mathcal{H}_x\}_{x\in \mathbb{R}}$  representing  a  filtration  of    sparse  networks,   
   we  expect to  use  (\ref{eq-8.bvcx81})   and      (\ref{eq-8.bvcx82})  for  the  computations of   the $n$-th  persistent homology,  $n\in \mathbb{N}$.  
   Note that      
  (\ref{eq-8.bvcx81})   and      (\ref{eq-8.bvcx82})    are   special     constrained  persistent  homology  with  $\alpha\in  {\rm  Ext}_1(V_i)$  and   $m=0$.

  By the  functoriality  of   the  constrained  cohomology  of  independence  hypergraphs,    the  localized  cohomology  groups 
 \begin{eqnarray*}
  H^n( \bar\delta(\mathcal{H}_x),  w,   V_i;  R),  ~~~~~~  x\in  \mathbb{R} 
 \end{eqnarray*}
 together  with the  induced  homomorphisms  
 \begin{eqnarray*}
(\bar\delta(\iota_x^y))_*:~~~  H^n( \bar\delta(\mathcal{H}_x),  w,   V_i;  R)\longrightarrow    H^n( \bar\delta(\mathcal{H}_y),  w,   V_i;  R),  ~~~~~~  -\infty<x\leq  y<+\infty, 
 \end{eqnarray*}
   gives  a  persistent  $R$-module.  We  call  this  persistent   $R$-module  the  {\it  localized  persistent  cohomology}   of  the  induced  filtration   $\{\bar\delta(\mathcal{H}_x)\}_{x\in\mathbb{R}}$   of   the  lower-associated   independence  hypergraphs  and   denote  it  as    
\begin{eqnarray}\label{eq-0vsvjr1}
{\bf  PH}^n( \bar\delta(\mathcal{H}_x),  w,   V_i;  R\mid  x\in \mathbb{R}).    
\end{eqnarray}
 The  localized  cohomology  groups  
 \begin{eqnarray*}
  H^n( \bar\Delta(\mathcal{H}_x),  w,   V_i;  R),  ~~~~~~  x\in  \mathbb{R} 
 \end{eqnarray*}
 together  with the  induced  homomorphisms  
 \begin{eqnarray*}
(\bar\Delta(\iota_x^y))_*:~~~  H^n( \bar\Delta(\mathcal{H}_x),  w,   V_i;  R)\longrightarrow    H^n( \bar\Delta(\mathcal{H}_y),  w,   V_i;  R),  ~~~~~~  -\infty<x\leq  y<+\infty, 
 \end{eqnarray*}
   gives  a  persistent  $R$-module.   We  call  this  persistent   $R$-module  the  {\it   localized  persistent  cohomology}   of  the  induced  filtration   $\{\bar\Delta(\mathcal{H}_x)\}_{x\in\mathbb{R}}$   of   the    associated   independence  hypergraphs  and   denote  it  as   
   \begin{eqnarray}\label{eq-0vsvjr2}
   {\bf  PH}^n( \bar\Delta(\mathcal{H}_x),  w,   V_i;  R\mid  x\in \mathbb{R}).
   \end{eqnarray}
For  $\{\mathcal{H}_x\}_{x\in \mathbb{R}}$  representing  a  filtration  of    dense  networks,   
     we  expect to  use  (\ref{eq-0vsvjr1})   and      (\ref{eq-0vsvjr2})    
    for  the  computations of   the $n$-th  persistent  cohomology,  $n\in \mathbb{N}$.    
   Note  that   (\ref{eq-0vsvjr1})   and      (\ref{eq-0vsvjr2})      are   special     constrained  persistent  cohomology  with  $\omega\in  {\rm  Ext}^1(V_i)$  and   $m=0$.

  \section*{Acknowledgement} {The  author would like to express his  deep
gratitude to the referee for the careful reading of the manuscript.}

 \smallskip

 \smallskip
 {\small
Shiquan Ren

Address:   School of Mathematics and Statistics,  Henan University,  Kaifeng  475004,  China.

E-mail:  renshiquan@henu.edu.cn}

  \end{document}